\def \sign{\mathop{\rm sign}\nolimits}
\def \Alt{\mathop{\rm Alt}\nolimits}
\def \dis{\mathop{\rm Dis}\nolimits}
\def \dim{\mathop{\rm dim}\nolimits}
\def \codim{\mathop{\rm codim}\nolimits}
\def \id{\mathop{\rm id}\nolimits}
\def \halt#1#2 {H^{alt} _{#1} (#2)}
\def \haltz#1#2 {H^{alt} _{#1} (#2;\Z ) }
\def \dk#1 {D^k(#1)}
\def \ep#1 {\varepsilon _{#1}}
\def \constant#1 {{\bf \Q }^\bullet _{#1}}
\def \conshft#1#2 {{\bf \Q }^\bullet _{#1} [#2]}
\def \rank{{\mbox{\rm{rank}}}}
\def \MF{\mathop{\rm MF}\nolimits}
\def \orbit{\mathop{\rm Orbit}\nolimits}
\def \C{{\mathbb C}}
\def \Z{{\mathbb Z}}
\def \Q{{\mathbb Q}}
\def \AA{{\cal A}}
\def \KK{{\cal K}}
\def \PP{{\cal P}}
\newtheorem{theorem}{Theorem}[section]
\newtheorem{corollary}[theorem]{Corollary}
\newtheorem{proposition}[theorem]{Proposition}
\newtheorem{lemma}[theorem]{Lemma}
\newtheorem{remark}[theorem]{Remark}
\newtheorem{remarks}[theorem]{Remarks}
\newtheorem{example}[theorem]{Example}
\newtheorem{examples}[theorem]{Examples}
\newtheorem{definition}[theorem]{Definition}
\newtheorem{conjecture}[theorem]{Conjecture}
\newenvironment{proof}
 {\begin{trivlist} \item[\hskip \labelsep {\bf Proof.}]}
 {\hfill$\Box$\end{trivlist}}
\newenvironment{lproof}[1]
 {\begin{trivlist} \item[\hskip \labelsep {\bf Proof (#1).}]}
 {\hfill$\Box$\end{trivlist}}
\begin{document}
\renewcommand{\theenumi}{(\roman{enumi})}
\normalsize

\title{Stratification of Unfoldings of Corank 1 Singularities} 
\author{Kevin Houston \\
School of Mathematics \\ University of Leeds \\ Leeds, LS2 9JT, U.K. \\
e-mail: k.houston@leeds.ac.uk
}
\date{\today }
\maketitle
\begin{abstract}
In the study of equisingularity of families of mappings Gaffney introduced the crucial notion of excellent unfoldings. This definition essentially says that the family can be stratified so that there are no strata of dimension $1$ other than the parameter axis for the family. 
Consider a family of corank $1$ multi-germs with source dimension less than target.
In this paper it is shown how image Milnor numbers can ensure some of the conditions involved in being excellent. The methods used can also be successfully applied to cases where the double point set is a curve. In order to prove the results the rational cohomology description of the disentanglement of a corank $1$ multi-germ is given for the first time. Then, using a simple generalization of the Marar-Mond Theorem on the multiple point space of such maps, this description is applied to give conditions which imply the upper semi-continuity of the image Milnor number. From this the main results follow.

AMS Mathematics Subject Classification 2000 : 32S15, 32S30, 32S60.
\end{abstract}

\section{Introduction}
An important notion, introduced by Gaffney in \cite{polar}, for studying equisingularity of a family of complex analytic mappings, is that of {\em{excellent unfolding}}, (see Definition~\ref{exc_def}). Essentially, one is ensuring that there are no one-dimensional strata -- except for the parameter axis of the family -- in a stratification of the target of an unfolding of a complex analytic map with isolated instability.

The aim of this paper is show how constancy of the image Milnor number in an unfolding of a corank $1$ finitely $\AA $-determined multi-germ $f:(\C ^n,\underline{z})\to (\C ^p,0)$, with $n<p$, implies some of the conditions in the definition of excellent unfolding.
The image Milnor number for a complex analytic map is analogous to the Milnor number of a hypersurface and is defined to be the number of spheres in a {\em{disentanglement}} of the map, that is, the image of a local stabilisation.

To prove these results we study the topology of multiple point spaces of maps and their stabilisations.
In Section~\ref{mps}, we generalise the theorem of Marar and Mond in \cite{mm} that describes the multiple point spaces of a finitely $\AA$-determined corank $1$ mono-germ, with $n<p$, to the case of multi-germs. 
The proof given here in the mono-germ case is particularly simple and some simplifications are made in the multi-germ case also.

For mono-germs the disentanglement of a map is homotopically equivalent to a wedge of spheres (of possibly varying dimension), see \cite{dm, loctop, vancyc}. In this paper we show that for corank 1 multi-germs with $n<p$ the rational cohomology of the disentanglement is the same as the cohomology of a wedge of spheres (again of possibly varying dimensions). This is done in Section~\ref{dis}. 
We also give a formula that generalises to multi-germs (and corrects) a formula in \cite{gomo} for calculating the rank of the alternating cohomology of the Milnor fibres of the multiple point spaces of the map. This allows us to calculate the image Milnor number in practice since it is just the sum of these ranks.

These descriptions are then applied in Section~\ref{uppersemi} where some simple conditions to ensure upper semi-continuity of the image Milnor number for corank $1$ maps are described. 
 
Thus the image Milnor number behaves like the Milnor number of an isolated complete intersection singularity.

Then in Section~\ref{unfoldings} we show that if the image Milnor number is constant in an unfolding and is upper semi-continuous, then various conditions used in the definition of excellent unfolding hold. For example, Theorem~\ref{0stablesconstant} gives conditions under which the constancy of $\mu _I$ in a family of finitely $\AA $-determined map germs will give constancy of the $0$-dimensional stable singularities in the family (called $0$-stables).
Theorem~\ref{main_thm} is less general and gives conditions under which the constancy of $\mu _I$ in the family gives that the instability locus of the unfolding is the parameter axis.

We also observe some cases where the corank is greater than $1$ and for which similar statements about excellence hold. 

The constancy of $\mu _I$ is important in equisingularity and so the results of Section~\ref{unfoldings} can be applied there.
A significant novelty of the results is that, except for say \cite{diswe}, multi-germs have not been studied much in the equisingularity context.
Furthermore, the results of the paper can be used in the case of mono-germs since multi-germs occur naturally in unfoldings of mono-germs.

We conclude with some remarks and ideas for future work.

The paper was begun while the author was a visitor at Northeastern University. He thanks 
Terry Gaffney and David Massey for their hospitality and discussions on this material.
The author was supported by an EPSRC grant (Reference number EP/D040582/1).

\section{Multiple point spaces}
\label{mps}

The standard definition of the multiple point spaces of a map is the following.
\begin{definition}
Suppose that $f:X\to Y$ is a continuous map of topological spaces. Then 
the {\em{$k$th multiple point space of $f$}}, denoted $D^k(f)$, is the set 
\[
D^k(f):={\rm{closure}} \{ (x_1,\dots ,x_k)\in X^k | f(x_1)=\dots
= f(x_k), {\mbox{ such that }} x_i\neq x_j , i\neq j \} .
\] 
\end{definition}
The group of permutations on $k$ objects, denoted $S_k$, acts on $D^k(f)$
in the obvious way: permutation of copies of $X$ in $X^k$.

This definition does not behave well under deformations. Consider $f(x)=(x^2,x^3)$. We have $D^2(f)=\emptyset $ but for a perturbation $f_t(x)=(x^2,x^3+tx)$ with $t\neq 0$ there will be a non-empty double point set. By restricting our class of maps we can give a definition of multiple point spaces that does behave well under deformations.

\begin{definition}
A map $f:(\C ^n,0)\to (\C ^p,0)$, $n<p$, is called {\em{corank $1$}} if the rank of the differential of $f$ at $0$ is at least $n-1$, i.e., $\rank \, df(0) \geq n-1$. Note that this means that potentially $f$ could be an immersion.

We say that a multi-germ $f:(\C ^n,\underline{z} )\to (\C ^p,0)$, $n<p$, is {\em{corank $1$}} if each branch is corank $1$, 
i.e., $f:(\C ^n,z_i)\to (\C ^p,0)$ is corank $1$ for all $z_i\in \underline{z}$.
\end{definition}

For corank 1 mono-germs there is a good way to define a type of multiple point space by $S_k$-invariant functions
described by Mond in \cite{r2r3} and we recall this now. 

If $G:\C ^{n-1} \times \C \to \C $ is a function in the variables
$x_1,\dots ,x_{n-1}, y$, then define 
$V^k_i(G):\C ^{n+k-1} \to \C $ to be
\[
\begin{array}{|cccccccc|}
1 & y_1 & \dots & y_1^{i-1} & G(x,y_1) & y_1^{i+1} & \dots & 
y_1^{k-1} \\
\vdots & \vdots & & \vdots & \vdots  & \vdots & & \vdots \\ 
1 & y_k & \dots & y_k^{i-1} & G(x,y_k) & y_k^{i+1} & \dots & 
y_k^{k-1}
\end{array}
\Bigg/
\begin{array}{|cccc|}
1 & y_1 & \dots & y_1^{k-1} \\
\vdots  & \vdots & & \vdots \\
1 & y_k & \dots & y_k^{k-1} 
\end{array}
\]
for $i=1,\dots , k-1$.
This function is $S_k$-invariant and, if $G$ is holomorphic, 
then so is $V_i^k(G)$.
Following Marar and Mond in \cite{mm} we can make the following definition.
\begin{definition}
If $f(x_1,\dots ,x_{n-1},y)=(x_1,\dots ,x_{n-1}, f_1(x,y), 
\dots ,f_{p-n+1}(x,y))$, then the {\em{$k$th (unfolding) multiple point space}}, denoted $\widetilde{D}^k(f)$, is generated in $\C ^{n+k-1}$ by 
$V^k_r(f_s)=0$, where $r=1,\dots ,k-1$, and $s=1,\dots ,p-n+1$. 
\end{definition}
We shall drop the reference to unfolding in the definition.
Note that $D^k(f)\subseteq \widetilde{D} ^k(f)$ and that the relation can be strict,
for example let $f(x)=(x^2,x^3)$. The point here is that $\widetilde{D}^2(f)$ will detect that the cusp point has multiplicity $2$ and so will produce a double point when deformed.

For a multi-germ we define $\widetilde{D}^k(f)$ inductively. 
Let $f:(\C ^n, \underline{z} )\to (\C ^p,0)$ be a corank 1 multi-germ where $\underline{z}=\{ z_1, \dots ,z_s\}$. 
For the bi-germ $\{ f_i , f_j\}$ we define 
\[
\widetilde{D}^2(\{ f_i, f_j\})=
\left\{
\begin{array}{ll}
D^2(\{f_i,f_j\}), & {\mbox{ for }} i\neq j \\ 
\widetilde{D}^2(f_i), &  {\mbox{ for }} i=j. 
\end{array}
\right.
\]
Then $\widetilde{D}^2(f)$ is the union of $\widetilde{D}^2(\{ f_i, f_j\})$ for all $i$ and $j$. We can (and do) identify this space with a subset of $(\C ^n, \underline{z}) \times (\C ^n, \underline{z})$ as in \cite{gomo}.

Define $\widetilde{D} ^k(f)= \widetilde{D}^2(\varepsilon ^{k-1,k-2})$ where $\varepsilon ^{k-1,k-2}$ is the map 
$\varepsilon ^{k-1,k-2}:\widetilde{D}^{k-1}(f)\to \widetilde{D}^{k-2}(f)$ given by $\varepsilon ^{k-1,k-2}(x_1,\dots ,x_{k-2}, x_{k-1})=(x_1,\dots ,x_{k-2})$. We can identify the resulting space with a subset of $(\C ^n,\underline{z} )^k$.

In \cite{mm} the authors show (roughly speaking) that a mono-germ is finitely $\AA $-determined if and only if the multiple point spaces are isolated complete intersection singularities. They also show that the map is stable if and only if the  spaces are non-singular. We now show that the same is true for multi-germs. It is possible to synthesise a proof of this from \cite{mm} (because it is practically proved there but not stated). Instead we opt to give an alternative method that is simpler in a number of respects.

\begin{theorem}[Cf.\ \cite{mm}]
\label{mps_for_stable}
Suppose that $F:(\C ^n,\underline{z})\to (\C ^p,0)$, $n<p$, is a finite multi-germ of corank 1.
Then, $F$ is stable if and only if, for all $k$, $\widetilde{D}^k(F)$ is either non-singular of dimension $nk-p(k-1)$ or empty.
\end{theorem}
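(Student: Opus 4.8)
The plan is to prove both implications by reducing to the mono-germ case of Marar and Mond \cite{mm} together with a careful analysis of how the multiple point spaces of a multi-germ decompose according to which branches are involved. First I would set up the combinatorial bookkeeping: for a multi-germ with branches $f_1,\dots,f_s$, the space $\widetilde{D}^k(F)$ decomposes (up to the $S_k$-action) into pieces indexed by functions $\lambda:\{1,\dots,k\}\to\{1,\dots,s\}$ recording which branch each of the $k$ points lies on; the piece for $\lambda$ is built from the $\widetilde{D}^{k_i}$ of the individual branches $f_i$ (where $k_i=|\lambda^{-1}(i)|$) together with the honest multiple-point relations $D^m$ coming from coincidences \emph{between} distinct branches. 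So the first real step is to make this decomposition precise and functorial with respect to the maps $\varepsilon^{k,k-1}$, using the inductive definition $\widetilde{D}^k(f)=\widetilde{D}^2(\varepsilon^{k-1,k-2})$ already given.

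Next, for the forward direction, assume $F$ is stable. Stability of a multi-germ is equivalent to stability of each mono-germ branch \emph{and} transversality (in the jet-theoretic / general-position sense) of the images of the branches and their multiple-point strata. From stability of each branch $f_i$, the Marar–Mond theorem gives that each $\widetilde{D}^{k_i}(f_i)$ is nonsingular of the expected dimension $nk_i-p(k_i-1)$ or empty; from the transversality of distinct branches, the fibre products that glue these together along the $D^m$-relations are again nonsingular of the expected dimension, by a standard transverse-intersection dimension count. Summing the contributions $\sum_i (nk_i-p(k_i-1))$ against the codimension imposed by the between-branch coincidences yields exactly $nk-p(k-1)$. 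I would organize this as an induction on $k$ using the inductive definition, so that at each stage one only has to analyse a \emph{bi-germ} situation $\widetilde{D}^2$ of the map $\varepsilon^{k-1,k-2}$, where the needed transversality is a direct consequence of stability.

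For the converse, assume $\widetilde{D}^k(F)$ is nonsingular of the stated dimension (or empty) for all $k$. Applying the multi-germ decomposition again, nonsingularity and correct dimension of $\widetilde{D}^k(F)$ forces each branch-diagonal piece $\widetilde{D}^{k}(f_i)$ to be nonsingular of the expected dimension, so by the Marar–Mond criterion each branch $f_i$ is stable; simultaneously, nonsingularity of the mixed pieces forces the branches to meet in general position, which is exactly the remaining condition for the multi-germ to be stable. Here I would lean on the fact — recalled in the excerpt — that for corank $1$ maps finite $\AA$-determinacy is equivalent to the $\widetilde{D}^k$ being isolated complete intersection singularities, so that we are always in the ICIS world and ``nonsingular of expected dimension'' is the correct smoothness test.

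The main obstacle I anticipate is the between-branch transversality analysis: one must show that ``$\widetilde{D}^k(F)$ nonsingular of expected dimension'' is genuinely equivalent to the conjunction ``each branch stable'' AND ``branches in general position,'' and conversely that these two together force the expected dimension. The subtlety is that the inductive definition $\widetilde{D}^k(F)=\widetilde{D}^2(\varepsilon^{k-1,k-2})$ mixes the two phenomena, and one has to check that the $D^2$ of a map between already-singular-looking spaces still computes the right thing — i.e. that the identification of $\widetilde{D}^2(f)$ with a subset of the source squared, and the iteration thereof, is compatible with the decomposition. Making the dimension count uniform across all index patterns $\lambda$, and verifying that no component of unexpectedly large dimension can hide inside the closure operation in the definition of $D^k$, is where the care is needed; everything else is a translation of \cite{mm} through this combinatorial dictionary.
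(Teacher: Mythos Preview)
Your approach is sound but differs from the paper's in two respects. First, the paper does not simply cite \cite{mm} for the mono-germ case: it gives a new, self-contained argument. In one direction it computes directly from Morin's normal form and the Vandermonde description; in the other it shows, by induction on $k$, that nonsingularity of $\widetilde{D}^k(F)$ forces the linear parts of the successive coefficient functions $f_i$, $h_{j,i}$ in the prenormal form to be independent, so that after coordinate changes the $l$-jet matches Morin's stable form. Second, for the forward implication in the multi-germ case the paper does \emph{not} decompose $\widetilde{D}^k(F)$ by branch-pattern $\lambda$; instead it observes that a trivial unfolding of any stable corank~1 multi-germ with branch multiplicities $(k_1,\dots,k_s)$ occurs inside a stable corank~1 \emph{mono}-germ of multiplicity at least $\sum k_i$, and then inherits nonsingularity of the multiple point spaces from the mono-germ result just established. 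Your decomposition-plus-transversality route would also work and makes the dimension count $nk-p(k-1)$ more transparent; the paper's embedding trick is shorter and sidesteps the bookkeeping you yourself flag as the ``main obstacle.'' For the multi-germ converse the two approaches essentially coincide: nonsingularity on the branch-diagonal pieces gives stability of each branch by the mono-germ case, and then one checks transversality of $F\times\cdots\times F$ to the big diagonal (Mather's criterion, as in Proposition~2.13 of \cite{mm}).

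One caution: your plan to ``lean on'' the equivalence between finite $\mathcal{A}$-determinacy and the $\widetilde{D}^k$ being isolated complete intersection singularities is circular --- that equivalence is Corollary~\ref{mps_for_findet}, which is \emph{deduced from} the theorem you are proving. Fortunately you do not actually need it anywhere in the argument, so simply drop that appeal.
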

Before giving the proof we will state some important corollaries.

It is possible to show that for stable maps that $D^k(f)=\widetilde{D}^k(f)$. 
This allows us to describe the (unfolding) multiple point spaces for finitely $\AA $-determined maps in a manner used  for mono-germs by Gaffney in \cite{gaffneymps}. 
\begin{proposition}
Suppose that $f:(\C ^n,\underline{z})\to (\C ^p,0)$ is a finitely $\AA $-determined multi-germ with
versal unfolding $F:(\C ^n\times \C ^q,\underline{z} \times \{ 0\} )\to 
(\C ^p \times \C ^q, 0\times 0 )$ given by $F(x,\lambda )=(f_\lambda (x), \lambda )$,
then the {\em{$k$th multiple point space}} is defined to be
$\widetilde{D} ^k(f):=D^k(F) \cap \{ \lambda _1 = \dots = \lambda _k=0 \}$,
where $\lambda _i$, $i=1, \dots ,k$, are the  
unfolding coordinates 
in $(\C ^{n+q},\underline{z} \times \{ 0 \} )^k$. 
\end{proposition}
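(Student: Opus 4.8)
The plan is to reduce the stated identity to the inductive definition of $\widetilde{D}^k(f)$ given earlier, using the observation quoted just above the Proposition that $D^k=\widetilde{D}^k$ for stable maps. First I would record the two standing inputs: since $F$ is a versal unfolding of the finitely $\AA$-determined germ $f$ it is stable, and since unfolding preserves corank $1$ — the $q$ new parameter directions raise the rank of the differential by $q$, so $\rank dF\ge (n+q)-1$ — the germ $F$ is a finite corank $1$ multi-germ. Hence Theorem~\ref{mps_for_stable} applies to $F$: every $\widetilde{D}^k(F)$ is non-singular of dimension $(n+q)k-(p+q)(k-1)$ or empty, and therefore $D^k(F)=\widetilde{D}^k(F)$ as (reduced) subspaces of $(\C^{n+q})^k$.

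Next I would observe that $D^k(F)$ is automatically contained in the subspace $\{\lambda^{(1)}=\dots=\lambda^{(k)}\}$, where $\lambda^{(i)}$ is the block of unfolding coordinates on the $i$th factor: if $F(q_1)=\dots=F(q_k)$ with $q_i=(p_i,\mu_i)$, then comparing the last $q$ components of $F$, which are just $\lambda$, forces $\mu_1=\dots=\mu_k$, and this is a closed condition so it survives taking closures. Consequently, intersecting $D^k(F)$ with $\{\lambda_1=\dots=\lambda_k=0\}$ is the same as intersecting $\widetilde{D}^k(F)$ with the single slice $\{\lambda=0\}$ obtained by setting the common value of the $\lambda^{(i)}$ to zero.

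The core step is then $\widetilde{D}^k(F)\cap\{\lambda=0\}=\widetilde{D}^k(f)$, which I would obtain by unwinding the inductive construction of $\widetilde{D}^k$ for multi-germs. In Mond's normal form each branch $F_i$ of $F$ is a corank $1$ germ whose immersive variables are $(x',\lambda)$ and whose remaining component functions are the $\lambda$-unfoldings of the component functions $f_{i,s}$ of the branch $f_i$; thus on the diagonal branch-$i$ piece, $\widetilde{D}^k(F)$ is cut out in coordinates $(x',\lambda,y_1,\dots,y_k)$ by the equations $V^k_r(f_{i,s,\lambda})=0$. Setting $\lambda=0$ replaces $f_{i,s,\lambda}$ by $f_{i,s,0}=f_{i,s}$ (the unfolding condition), and the resulting ideal is exactly the one defining $\widetilde{D}^k(f_i)$ — an equality of ideals, not merely of zero sets. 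On an off-diagonal piece, where the $k$ points are distributed among at least two distinct branches, the relevant double-point spaces are honest fibre products $\{F_i(a)=F_j(b)\}$, with no closure needed since the source points lie near distinct $z_i$, and $\lambda=0$ sends these to $\{f_i(a)=f_j(b)\}$. Feeding these two facts through the clauses defining $\widetilde{D}^2(\{F_i,F_j\})$, then through $\widetilde{D}^k(F)=\widetilde{D}^2(\varepsilon^{k-1,k-2})$, and taking the union over all ways of distributing the $k$ points among the branches, yields $\widetilde{D}^k(F)\cap\{\lambda=0\}=\widetilde{D}^k(f)$. Combined with the previous two paragraphs this is the Proposition, and it shows in passing that the construction does not depend on the chosen versal unfolding.

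The step I expect to be most delicate is this last one: one must verify that slicing by $\{\lambda=0\}$ is compatible with the inductive build-up of $\widetilde{D}^k$ — that the forgetful maps $\varepsilon^{k-1,k-2}$ for $F$ restrict over $\lambda=0$ to those for $f$, and that forming $\widetilde{D}^2$ of a map commutes with this restriction — and that one tracks ideals rather than merely point sets throughout, so that the inductively defined space $\widetilde{D}^k(f)$ is recovered on the nose. Everything else is formal once Theorem~\ref{mps_for_stable} and the stable-case identity $D^k=\widetilde{D}^k$ are in hand.
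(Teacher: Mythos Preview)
The paper does not actually supply a proof of this Proposition: it is stated immediately after the one-line remark ``It is possible to show that for stable maps $D^k(f)=\widetilde{D}^k(f)$'', with a reference to Gaffney's approach in \cite{gaffneymps}, and is then followed by the comment that one could have taken this as the definition. So there is no detailed argument in the paper to compare against.

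Your proposal is a reasonable and essentially correct expansion of the hint the paper gives. The overall architecture --- $F$ stable and corank~$1$, hence $D^k(F)=\widetilde{D}^k(F)$ by the stable-case identity, then slice by $\lambda=0$ and identify the result with the inductively defined $\widetilde{D}^k(f)$ --- is exactly what the paper is gesturing at. Your observation that $D^k(F)$ already lies in the diagonal $\{\lambda^{(1)}=\cdots=\lambda^{(k)}\}$ is correct and makes the slicing clean. The inductive verification that $\widetilde{D}^k(F)\cap\{\lambda=0\}=\widetilde{D}^k(f)$, handled separately on diagonal and off-diagonal pieces via the Vandermonde description and the fibre-product description respectively, is the right way to carry this out, and your caution about tracking ideals rather than point sets is well placed. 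In short, you have written out what the paper leaves implicit, and your self-identified delicate step (compatibility of the inductive build-up with slicing) is indeed where the work lies, though it is routine once set up as you describe.
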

In fact we could have taken this as a definition of multiple point space for a finitely $\AA $-determined map-germ (mono or multi). Such a definition is independent of the unfolding used in the sense that different unfoldings
produce multiple point spaces homeomorphic by $S_k$-equivariant maps.

\begin{corollary}[Cf.\ \cite{mm}]
\label{mps_for_findet}
Suppose that $F:(\C ^n,\underline{z})\to (\C ^p,0)$, $n<p$, is a finite multi-germ of corank 1.
Then, $F$ is finitely $\AA $-determined at $0\in (\C^ p,0)$ if and only if
for each $k$ with $nk-p(k-1)\geq 0$, $\widetilde{D}^k(F)$ is either an isolated complete
intersection singularity of dimension $nk-p(k-1)$ or empty, and if furthermore, for those $k$
such that $nk-p(k-1)<0$, $\widetilde{D}^k(F)$ consists of at most points given by a $k$-tuple of 
elements from $\underline{z}$.
\end{corollary}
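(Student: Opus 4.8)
The plan is to derive the corollary from Theorem~\ref{mps_for_stable} together with the Mather--Gaffney geometric criterion: a finite multi-germ with $n<p$ is finitely $\AA$-determined at $0$ if and only if it has \emph{isolated instability}, i.e.\ there is a representative $F\colon U\to V$ such that the multi-germ of $F$ over every $y\in V\setminus\{0\}$ is stable. Since the rank of a differential can only increase under small perturbations, and finiteness is inherited by restriction, each such germ over $y$ is again a finite corank $1$ multi-germ with source dimension less than target, so Theorem~\ref{mps_for_stable} applies to it. The first substantive step is a localisation statement: for $y\neq 0$ and a $k$-tuple $\underline{w}$ of points of $F^{-1}(y)\cap U$, the germ of $\widetilde{D}^k(F)$ at $\underline{w}$ agrees, $S_k$-equivariantly, with the $k$th multiple point space of the multi-germ of $F$ over $y$. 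This is immediate from the inductive definition of $\widetilde{D}^k$ and, in the corank $1$ case, from the fact that the functions $V^k_r(f_s)$ are built pointwise. Because $F$ is finite, the points of $\widetilde{D}^k(F)$ lying over $0\in(\C^p,0)$ form the finite set $\Sigma_k$ of $k$-tuples of elements of $\underline{z}$; combining the localisation statement with Theorem~\ref{mps_for_stable} over the points $y\neq 0$, we see that $F$ is finitely $\AA$-determined if and only if, for every $k$, $\widetilde{D}^k(F)$ is non-singular of dimension $nk-p(k-1)$ at every point of $\widetilde{D}^k(F)\setminus\Sigma_k$.

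For the forward implication, assume $F$ is finitely $\AA$-determined and choose a versal unfolding $\Phi\colon(\C^{n+q},\underline{z}\times\{0\})\to(\C^{p+q},0)$ of $F$. Then $\Phi$ is stable, and it is again a finite corank $1$ multi-germ, so Theorem~\ref{mps_for_stable} (recalling that $D^k=\widetilde{D}^k$ for stable maps) gives that $D^k(\Phi)$ is non-singular of dimension $(n+q)k-(p+q)(k-1)=nk-p(k-1)+q$, or empty. By the Proposition, $\widetilde{D}^k(F)$ is cut out on the smooth space $D^k(\Phi)$ by the $q$ unfolding coordinates (the remaining conditions $\lambda_1=\dots=\lambda_k$ holding identically on $D^k(\Phi)$, since $\Phi$ preserves the parameter), so every irreducible component of $\widetilde{D}^k(F)$ has dimension at least $nk-p(k-1)$. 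If $nk-p(k-1)\geq 0$, this lower bound together with non-singularity of dimension $nk-p(k-1)$ off the finite set $\Sigma_k$ forces every component of $\widetilde{D}^k(F)$ to have dimension exactly $nk-p(k-1)$; hence $\widetilde{D}^k(F)$ is a complete intersection of that dimension, so it is Cohen--Macaulay with singular locus contained in $\Sigma_k$, i.e.\ it is an isolated complete intersection singularity of dimension $nk-p(k-1)$, or empty. If $nk-p(k-1)<0$, non-singularity of ``dimension $nk-p(k-1)$'' off $\Sigma_k$ just says that $\widetilde{D}^k(F)\setminus\Sigma_k=\emptyset$, i.e.\ $\widetilde{D}^k(F)$ consists of at most points given by $k$-tuples of elements of $\underline{z}$.

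Conversely, assume the stated conditions. When $nk-p(k-1)\geq 0$, an isolated complete intersection singularity is non-singular of dimension $nk-p(k-1)$ away from its single singular point, which lies in $\Sigma_k$; when $nk-p(k-1)<0$, the hypothesis $\widetilde{D}^k(F)\subseteq\Sigma_k$ gives $\widetilde{D}^k(F)\setminus\Sigma_k=\emptyset$. By the localisation statement and Theorem~\ref{mps_for_stable}, the multi-germ of $F$ over each $y\neq 0$ is then stable, so $F$ has isolated instability and hence is finitely $\AA$-determined.

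The step I expect to need the most care is the localisation statement together with the scheme-theoretic bookkeeping: one must verify that the construction of the multiple point spaces genuinely localises over points of the target, so that Theorem~\ref{mps_for_stable} can be fed the germs over $y\neq 0$ one at a time, and one must keep the natural scheme structure on $\widetilde{D}^k(F)$ under control, so that ``non-singular of the expected dimension off a finite set'' really does upgrade to ``complete intersection'' with no embedded or lower-dimensional components. Both are routine for corank $1$ maps, but the second is where the multi-germ argument differs, cosmetically, from the mono-germ treatment of \cite{mm}, because here $\widetilde{D}^k$ is defined inductively and branch by branch.
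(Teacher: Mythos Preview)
Your proof is correct and follows the same overall route as the paper: reduce finite $\AA$-determinacy to isolated instability via the Mather--Gaffney criterion, then apply Theorem~\ref{mps_for_stable} at every $y\neq 0$ to translate stability there into non-singularity of the multiple point spaces away from $\Sigma_k$.

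The one place you diverge is the step upgrading ``non-singular of the expected dimension off a finite set'' to ``complete intersection''. The paper dispatches this in a single sentence (``It is elementary to calculate that the multiple point spaces are the right dimensions to be complete intersections''), presumably having the explicit Vandermonde equations and the inductive branch-by-branch description in mind. You instead pass to a versal unfolding $\Phi$, invoke the Proposition to realise $\widetilde{D}^k(F)$ as a slice of the smooth space $D^k(\Phi)$ by $q$ equations, and thereby obtain the lower bound $\dim\geq nk-p(k-1)$ on every component. Your route is a bit longer but has the virtue of making the complete-intersection claim genuinely transparent in the multi-germ case, where the inductive definition makes a direct equation-count less immediate; it also explains cleanly why no embedded or low-dimensional components can appear. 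The paper's phrasing, by contrast, leaves that verification to the reader.
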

\begin{proof}
For this we use Gaffney's Geometric Criteria for finite determinacy, see \cite{wall}:
The map $F$ is finitely determined if and only if it has an isolated instability at the point
$0\in (\C ^p,0)$. 

From Theorem \ref{mps_for_stable} we know that, except for $k$-tuples from $\underline{z}$, the multiple point spaces are non-singular of dimension $nk-p(k-1)$ if and only if the map is stable. Thus, if $F$ is finitely determined, the multiple point spaces have isolated singular points. It is elementary to calculate that the multiple point spaces are the right dimensions to be complete intersections.
Conversely, if the multiple point spaces have isolated singular points and are complete intersections, then, again by Theorem~\ref{mps_for_stable}, $F$ must be stable outside the origin. Hence, $F$ is finitely determined.  
\end{proof}
Note that the standard definition of isolated complete intersection singularity includes the case where the singularity is in fact non-singular.

Recall the following from \cite{gomo} Section 3.
Let $\PP = (k_1, \dots , k_1, k_2, \dots , k_2, \dots , k_r, \dots , k_r)$ be a partition of $k$ with $1\leq k_1 <k_2 < \dots < k_r$ and $k_i$ appearing $\alpha _i$ times (so $\sum \alpha _i k_i = k$). For every $\sigma \in S_k$ there is a partition $\PP _\sigma $, that is, in its cycle decomposition there are $\alpha _i$ cycles of length $k_i$. For any partition we can associate any one of a number of elements of $S_k$.

\begin{definition}[\cite{mm}]
For a map $f$ we define $\widetilde{D}^k(f,\PP _\sigma )$ to be the restriction of $\widetilde{D}^k(f)$ to the fixed point set of $\sigma $. This is well-defined for a partition, i.e., if $\PP _{\sigma } = \PP _{\sigma '}$, then 
$\widetilde{D}^k(f,\PP _{\sigma })$ is equivalent as an $S_k$-invariant isolated complete intersection
singularity to 
$\widetilde{D}^k(f,\PP _{\sigma '})$.
\end{definition}

\begin{corollary}[Cf.\ \cite{mm}]
Let $F:(\C ^n,\underline{z})\to (\C ^p,0)$, $n<p$, be a corank $1$ finite complex analytic multi-germ and $\PP $ be a partition of $k$. 
\begin{enumerate}
\item If $F$ is stable, then $\widetilde{D}^k(F, \PP )$ is non-singular of  dimension $nk-p(k-1)+ \sum _i \alpha _i -k$.
\item If $F$ is finitely $\AA $-determined, then two cases occur.
	\begin{enumerate}
	\item If $nk-p(k-1)+\sum _i \alpha _i -k\geq 0$, then $\widetilde{D}^k(F, \PP )$ is an isolated complete intersection
singularity of dimension $nk-p(k-1)+\sum _i \alpha _i -k$.
	\item If $nk-p(k-1)+\sum _i \alpha _i -k<0$, then $\widetilde{D}^k(F, \PP )$ is at most an isolated point consisting of a $k$-tuple of the elements of $\underline{z}$.
	\end{enumerate}
\end{enumerate}
\end{corollary}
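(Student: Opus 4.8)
The plan is to prove part~(1) by adapting, to the $\sigma$-fixed strata, the divided-difference analysis of the multiple point spaces --- either the one of \cite{mm} or the simpler variant behind Theorem~\ref{mps_for_stable} --- and then to derive part~(2) formally from part~(1), Gaffney's geometric criterion, and standard facts about complete intersections. Recall from the definition above, after \cite{mm} and \cite{gomo}, that $\widetilde{D}^k(F,\PP_\sigma)$ is the restriction of $\widetilde{D}^k(F)$ to the linear subspace $\Lambda_\sigma\subset\C^{nk}$ fixed by $\sigma$ acting by permuting the $k$ copies of $\C^n$; in terms of the corank-$1$ normal forms of the branches, $\Lambda_\sigma$ reduces to a smooth space carrying one ``$y$-slice'' for each of the $\ell:=\sum_i\alpha_i$ cycles of $\sigma$.

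For part~(1), the key step is to check that on $\Lambda_\sigma$ the defining functions $V^k_r(f_s)$ of $\widetilde{D}^k(F)$ must be replaced by their divided-difference (Hermite) analogues, that there are still $(k-1)(p-n+1)$ of these, and that when $F$ is stable they have independent differentials along their common zero set, with that zero set smooth of the dimension predicted by the count, namely $nk-p(k-1)+\sum_i\alpha_i-k=:d(\PP)$. Equivalently this is a computation of the $\langle\sigma\rangle$-representation on the tangent spaces of $\widetilde{D}^k(F)$ at its $\sigma$-fixed points; it is consistent with, and its smoothness part can be reproved from, the soft observation that the $S_k$-action on the non-singular space $\widetilde{D}^k(F)$ (Theorem~\ref{mps_for_stable}) is the restriction of the permutation action on $\C^{nk}$, hence holomorphic, so that $\widetilde{D}^k(F,\PP_\sigma)=\widetilde{D}^k(F)^{\langle\sigma\rangle}$ is automatically a closed complex submanifold. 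As in the proof of Theorem~\ref{mps_for_stable}, the mono-germ case can be treated directly and the multi-germ case follows by feeding it through the inductive identity $\widetilde{D}^k(F)=\widetilde{D}^2(\varepsilon^{k-1,k-2})$.

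For part~(2), suppose $F$ is finitely $\AA$-determined. By Gaffney's geometric criterion, exactly as in the proof of Corollary~\ref{mps_for_findet}, $F$ is stable away from the origin, so part~(1), applied to the stable multi-germs of $F$ over nonzero target values near $0$, shows that $\widetilde{D}^k(F,\PP_\sigma)$ is non-singular of pure dimension $d(\PP)$ outside the finitely many $\sigma$-fixed $k$-tuples of elements of $\underline z$ --- these being the only points of $\widetilde{D}^k(F)$ lying over $0$. Since every positive-dimensional component of $\widetilde{D}^k(F)\subset\C^{nk}$ must contain points over nonzero target values, when $d(\PP)\geq 0$ this forces $\widetilde{D}^k(F,\PP_\sigma)$ to be equidimensional of dimension $d(\PP)$; being cut out in a smooth space by $(k-1)(p-n+1)$ functions of the correct codimension, those functions are then a regular sequence, so $\widetilde{D}^k(F,\PP_\sigma)$ is a complete intersection whose singular locus is contained in the finite special set --- an isolated complete intersection singularity of dimension $d(\PP)$, which is (2)(a). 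When $d(\PP)<0$, part~(1) gives that away from the special set $\widetilde{D}^k(F,\PP_\sigma)$ is smooth ``of negative dimension'', hence empty, so it is contained in the finite set of $\sigma$-fixed $k$-tuples of elements of $\underline z$ --- which is (2)(b).

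The step I expect to be the main obstacle is the one highlighted in the second paragraph: verifying that restricting $\widetilde{D}^k(F)$ to $\Lambda_\sigma$ does not cause its defining equations to degenerate, so that the partitioned multiple point space is again governed by $(k-1)(p-n+1)$ divided-difference equations in a smooth space whose dimension combines with that number to give $d(\PP)$. This is where the combinatorics of the partition $\PP$ genuinely enters, and it amounts to understanding the $\langle\sigma\rangle$-action on the tangent spaces of the multiple point spaces at their $\sigma$-fixed points; everything after it is dimension bookkeeping and the standard commutative algebra of complete intersections. Finite $\AA$-determinacy, as opposed to mere stability, is needed only to invoke Gaffney's criterion and then to deal with the finitely many points of $\widetilde{D}^k(F,\PP_\sigma)$ lying over the origin.
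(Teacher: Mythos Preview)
Your proposal is correct, and for part~(ii) it is exactly what the paper does: it too invokes Gaffney's geometric criterion and the argument of Corollary~\ref{mps_for_findet} to pass from the stable case to the finitely determined case.

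For part~(i) the paper takes a shorter route than your main line. Rather than re-analysing the divided-difference equations restricted to the fixed subspace $\Lambda_\sigma$, the paper simply uses that, by Theorem~\ref{mps_for_stable}, $\widetilde{D}^k(F)$ is already non-singular of dimension $nk-p(k-1)$, and then observes that $\widetilde{D}^k(F,\PP)$ is cut out inside this smooth space by the $k-\sum_i\alpha_i$ \emph{linear} equations defining $\Lambda_\sigma$. So what you flagged as the main obstacle --- checking that the $(k-1)(p-n+1)$ nonlinear equations do not degenerate on $\Lambda_\sigma$ --- is bypassed: one cuts a known smooth space by linear equations instead of restricting nonlinear equations to a linear slice. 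Your ``soft observation'' about fixed sets of holomorphic group actions is in fact closer to the paper's viewpoint than your primary plan. Your more elaborate approach is essentially the one in \cite{mm}; it buys explicit Hermite-type equations for $\widetilde{D}^k(F,\PP)$ and a direct check of the dimension, at the cost of redoing work that Theorem~\ref{mps_for_stable} already packages. Both arguments are valid; the paper's is terser because it leverages the theorem it has just proved.
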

\begin{proof}
Part (i) follows from Theorem~\ref{mps_for_stable} and the fact that $\widetilde{D}^k(F,\PP )$ is defined in 
$\widetilde{D}^k(F)$ by the right number of linear equations.
Part (ii) is a corollary of (i) using the same ideas as in the proof of Corollary~\ref{mps_for_findet}.
\end{proof}

\begin{lproof}{of Theorem \ref{mps_for_stable}}
Consider first the case of mono-germs.
The normal form for a stable corank 1 map was given by Morin, see \cite{morin}. 
The result is that  if $F$ is stable, then $F$ is $\AA $-equivalent to a germ given by
\[
(u,w,y^l+ \sum_{i=1}^{l-2} u_iy^i, \sum_{i=1}^{l-1} w_{1i}y^i, \dots , \sum_{i=1}^{l-1} w_{ri}y^i ) ,
\]
where $r=p-n$, $u=(u_1,\dots ,u_{l-2})$ and $w=(w_{0,0}, \dots , w_{r,l-1},w_1,\dots w_q)$. (The 
$(w_1,\dots w_q)$ arise from trivially unfolding.)  
Conversely, any map-germ which has $l$-jet equal to the above is stable.

If we assume that $F$ is stable, then it is elementary to show, using the normal form above and the Vandermonde description of the defining equations  that the multiple point spaces are non-singular.

Now, for the converse, suppose that $F$ is such that 
$\widetilde{D}^k(F)$ is non-singular of dimension $nk-p(k-1)$ or empty for all $k$. 
Then we show that  $F$ is $\AA $-equivalent to a germ which has $l$-jet equal to that given above.

Assume that $F$ has multiplicity $l$. We can make standard left and right changes of coordinates and a Tschirnhaus transformation to show that $F$ is $\AA $-equivalent to a multi-germ with $l$-jet equal to 
\[
(u,w, y^l + \sum_{i=1}^{l-2} f_i(u,w)y^i , 
 \sum_{i=1}^{l-1} h_{1,i}(u,w)y^i , \dots ,
 \sum_{i=1}^{l-1} h_{r,i}(u,w)y^i ) ,
\]
for some functions $f_i$ and $h_{i,j}$, with zero constant term. 

In effect we will use induction to show $F$ has the required form.
Consider first $\widetilde{D}^2(F)$. 
The defining equations
for $\widetilde{D}^2(F)$ have linear terms arising from the linear terms in 
$f_1$ and $h_{j,1}$ for $j=1,\dots ,r$.
To ensure the non-singularity of $\widetilde{D}^2(F)$
the $p-n+1$ defining equations should have a non-zero linear term. 
There exists a change
of coordinates involving only $u$ and $w$ so that (without loss
of generality)
$f_1(u,w)=u_1$ and $h_{i,1}=w_{i,1}$ for $i=1,\dots ,r$.
Then through a left change of coordinates we can restore the
components of $F$ to $(u,w, \dots )$.

When we look at $\widetilde{D}^3(F)$ we get $p-n+1$ equations with 
non-zero linear terms arising from the linear terms in $f_1$ and $h_{j,1}$, (i.e.,
the $u_1$, $w_{i,1}$ we found above). The other $p-n+1$ defining
equations have linear terms arising from the linear terms of $f_2$, 
and $h_{i,2}$ and to ensure non-singularity of the multiple point space
we have, after a change of
coordinates,
$f_2(u,w)=u_2$ and $h_{i,2}(u,w)=w_{i,2}$ for $i=1,\dots ,r$.

Thus, by proceeding in this way for all $k<l$ we get a map
\[
(u,w, y^l + \sum_{i=1}^{l-2} u_iy^i , 
 h_{1,l-1}(u,w)y^{l-1} +\sum_{i=1}^{l-2} w_{1,i}y^i , 
\dots ) .
\]

Now, $\widetilde{D}^l(F)$ is also non-singular and so noting that  
 $V_{l}^{l-1}(y^l)$ is linear in $y_j$ we
can deduce that the coefficient of the $y^{l-1}$ term
in each of the last $r-1$ components is linear (in $w$).
Thus, we have produced a map of the statement of the lemma.

We now tackle the multi-germ case.
First suppose that $F$ is stable. 
If the branches of $F$ have multiplicity $(k_1,k_2,\dots , k_s)$, then a multi-germ $\AA $-equivalent to a trivial unfolding of $F$ appears in the stable mono-germ of multiplicity greater than or equal to $\sum _{i=1}^s k_i$. The multiple point spaces of this mono-germ are non-singular by the above working and at the multi-germ in question, a local multiple point spaces is just the trivial unfolding of the multiple point spaces for $F$ we deduce that the multiple point spaces for $F$ are non-singular.

Conversely, let us suppose that the multiple point spaces of $F$ are non-singular. This implies that the multiple point spaces of the branches are non-singular and since these are mono-germs, the branches must be stable. We can now work in the same way as in the proof of Proposition~2.13 of \cite{mm}, i.e., we show that $F\times \dots \times F \to (\C ^p)^s$ is transverse to the big diagonal in  $(\C ^p)^s$, since Mather's Criterion for stable multi-germs will show that $F$ is stable.
\end{lproof}

\section{Disentanglements}
\label{dis}
The disentanglement of a complex analytic map $f$ is the discriminant of a local stabilisation of $f$ and is
an important analytic invariant of $f$ analogous to the Milnor fibre of a hypersurface singularity. In the case of mono-germs where $n\geq p-1$ this topological  space is homotopically equivalent to a wedge of  spheres all of the same dimension, see \cite{dm} and \cite{vancyc}. In the case $n<p-1$, the spheres may be of different dimensions, see \cite{loctop}. 

For multi-germs it is not too difficult to generalise the proofs in \cite{dm} and \cite{vancyc} to show that, for $n\ge p-1$, the disentanglement is again homotopically a wedge of spheres, see \cite{highmult}. 
In the $n<p-1$ multi-germ case the problem is still open. 
However, when the multiple point spaces have a simple structure, for example, they are isolated complete intersection singularities, we are able to show in the next theorem that the rational cohomology of the disentanglement is the same as the cohomology of a wedge of  spheres (of possibly varying dimension).

Suppose that $f:(\C ^n,\underline{z})\to (\C ^p,0)$, $n<p$, is a finitely $\AA $-determined
map-germ with $F:(\C ^n\times \C ^r,\underline{z}\times 0)\to (\C ^p\times \C^r,0)$ a versal unfolding, so that 
$F$ has the form $F(x,t)=(f_t(x),t)$. 
Let $\Sigma $ be the bifurcation space in the unfolding parameter space $\C ^r $, i.e., points such that the map $f_t:\C ^n \to \C ^p $ is unstable.
For corank 1 maps and maps in the nice dimensions the set $\Sigma $ is a proper subvariety of $\C ^r$, see \cite{mapfib}, and so $\C^r \backslash \Sigma $ is connected. 

Given a Whitney stratification of the image of $f$ in $\C ^p $ there exists $\epsilon _0>0$ such that for all $0<\epsilon \leq \epsilon _0$, the real $(2p-1)$-sphere centred at $0$ of radius $\epsilon $ is transverse to the strata of the image of $f$.

Consider the map $f_t|f_t^{-1}(B_\epsilon )$ where $B _\epsilon $ is the closed ball of radius $\epsilon $ centred at $0$ with $\epsilon \leq \epsilon _0$ and $t\in \C^r \backslash \Sigma $. This stable map is called {\em{the disentanglement map}} of $f$ and its image 
is the {\em{disentanglement}} of $f$, denoted $\dis (f)$. This is independent of sufficiently small $\epsilon $ and $t$.
To ease notation we will write $\widetilde{f}$ rather than $f_t|f_t^{-1}(B_\epsilon )$. As $f^{-1}(0)$ is a finite set we can assume that $\widetilde{f}$ has the form $\widetilde{f}:\coprod_{j=1}^{|\underline{z}|}  \widetilde{U}_j \to \C ^p$ where $\widetilde{U}_j$ is  a contractible open set.

We can now generalise Theorem 2.6 of \cite{gomo} which only considered mono-germs.
Let $d(f)=\sup \{ k \,  | \, \widetilde{D}^k(\widetilde{f}) \neq \emptyset \}$ and let $s(f)$ be the number of branches of $f$ through the origin $0$ in $(\C ^p,0)$, i.e., the cardinality of $\underline{z}$.
\begin{theorem}
\label{dis_cohomology}
Suppose that $f:(\C ^n,\underline{z})\to (\C ^p,0)$, $n<p$, is a corank 1 finitely $\AA $-determined
multi-germ. Then, $\widetilde{H}^*(\dis (f);\Q )$ has non-trivial
groups possible only in dimensions $p-(p-n-1)k-1$ for all $2\leq k \leq d(f)$ and if $s(f)>d(f)$, then in
$\widetilde{H}^{d-1}(\dis (f);\Q )$.
\end{theorem}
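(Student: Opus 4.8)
The plan is to run the image computing spectral sequence of Goryunov and Mond (see \cite{gomo}) for the disentanglement map $\widetilde f:\coprod_{j=1}^{s(f)}\widetilde U_j\to\C^p$, whose image is $\dis(f)$, feeding into it the isolated complete intersection description of the multiple point spaces obtained in Section~\ref{mps}. Since $\widetilde f$ is a finite \emph{stable} map, $D^k(\widetilde f)=\widetilde D^k(\widetilde f)$ for all $k$, and the spectral sequence takes the form
$$E_1^{r,q}=\widetilde H^q_{\mathrm{alt}}\bigl(\widetilde D^{r+1}(\widetilde f);\Q\bigr)\ \Longrightarrow\ \widetilde H^{r+q}(\dis(f);\Q),$$
with $\widetilde D^1(\widetilde f)=\coprod_j\widetilde U_j$ a disjoint union of contractible sets. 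Because $\widetilde D^k(\widetilde f)=\emptyset$ for $k>d(f)$, only the columns $0\le r\le d(f)-1$ are non-zero, so the spectral sequence converges.

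The next step is to compute the $E_1$-page. For $k\ge 2$, $\widetilde D^k(\widetilde f)$ is the Milnor fibre of the germ $\widetilde D^k(f)$, which by Corollary~\ref{mps_for_findet} is a finite union of isolated complete intersection singularities of dimension $\delta_k:=nk-p(k-1)$ when $\delta_k\ge 0$, and a set of isolated points (so $\widetilde D^k(\widetilde f)=\emptyset$) when $\delta_k<0$. The Milnor fibre of a $\delta_k$-dimensional isolated complete intersection singularity is $(\delta_k-1)$-connected, so $\widetilde H^q(\widetilde D^k(\widetilde f);\Q)$, and hence its alternating part, vanishes for $q\ne 0,\delta_k$; and $E_1^{0,q}=0$ for $q\ne 0$ as $\widetilde D^1(\widetilde f)$ is contractible on each component. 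Therefore the $E_1$-page is supported in the row $q=0$ together with the rows $q=\delta_k$, $2\le k\le d(f)$, $\delta_k\ge 1$.

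It remains to read off where these can land. Each row $q=\delta_k\ge 1$ contains a single group, at $r=k-1$, since $\delta_\bullet$ strictly decreases in $k$; this contributes to total degree $k-1+\delta_k=p-(p-n-1)k-1$. The row $q=0$, with the differential $d_1$, is a complex $C^\bullet$ with $C^r=\widetilde H^0_{\mathrm{alt}}(\widetilde D^{r+1}(\widetilde f);\Q)$, which I would analyse directly: the self-intersection components of the $\widetilde D^{r+1}(\widetilde f)$ are connected and carry a trivial action on $H^0$, so only the components indexed by $(r+1)$ distinct branches survive in the alternating part, and $C^\bullet$ becomes, cluster by cluster over the mutually meeting branches of $f$, the simplicial cochain complex of a simplex truncated above degree $d(f)-1$. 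A short combinatorial computation then gives $H^r(C^\bullet)=0$ except for $r=0$, where it records $\pi_0(\dis(f))$, and for $r=d(f)-1$, where it is non-zero precisely when $s(f)>d(f)$. Since $E_\infty$ is a subquotient of $E_2$, we conclude that $\widetilde H^m(\dis(f);\Q)$ can be non-zero only for $m=0$, for $m=p-(p-n-1)k-1$ with $2\le k\le d(f)$, or (when $s(f)>d(f)$) for $m=d(f)-1$. Finally the stray degree $m=0$ is absorbed into the list: if $2n\ge p$ then $\delta_2\ge 0$, so by Corollary~\ref{mps_for_findet} every bigerm $\{f_i,f_j\}$ has $\widetilde D^2(\{f_i,f_j\})\neq\emptyset$, the stabilised branch images pairwise meet, $\dis(f)$ is connected and $\widetilde H^0(\dis(f);\Q)=0$; whereas if $2n<p$ then $\delta_k<0$ for all $k\ge 2$, forcing $d(f)=1$, so that $0=d(f)-1$ and a non-zero $\widetilde H^0$ merely records $s(f)\ge 2>1=d(f)$. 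This yields exactly the set of degrees in the statement.

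I expect the genuine obstacle to be the analysis of the row $q=0$: identifying $C^\bullet$ with truncated simplicial cochain complexes attached to the branches of $f$, keeping track of the self-intersection components (which needs the connectivity statement, together with a little care when $\delta_{d(f)}=0$), and pinning down exactly when the top cohomology $H^{d(f)-1}(C^\bullet)$ is non-trivial. This is where the comparison of $s(f)$ with $d(f)$ actually enters. The remaining ingredients --- the image computing spectral sequence of \cite{gomo} and the complete intersection structure of the $\widetilde D^k$ from Corollary~\ref{mps_for_findet} --- are by now standard, so the rest is assembly.
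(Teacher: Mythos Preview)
Your approach is correct but differs from the paper's in one essential respect. You run the absolute image computing spectral sequence for the disentanglement map $\widetilde f$ and then analyse the row $q=0$ by hand, identifying it (up to the $\delta_{d(f)}=0$ correction) with the truncated reduced simplicial cochain complex on the set of branches. The paper instead passes to the \emph{relative} spectral sequence for the pair $(F,\widetilde f)$, where $F$ is a versal unfolding. In that version the columns $r+1\le d(f)$ carry $\Alt_{r+1}H^{*}(D^{r+1}(F),\widetilde D^{r+1}(\widetilde f);\Q)$, which is concentrated in the single degree $\delta_{r+1}+1$ because it is the relative cohomology of (Milnor ball, Milnor fibre); so these columns contribute nothing to $q=0$ and your hardest step disappears. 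The residual $q=0$ row, for $r+1>d(f)$, is then a tail of the bottom row of the spectral sequence for $F$ alone, and the paper handles that row in a separate lemma by retracting $D^k(F)$ onto the fibre over $0$ and using that the image of $F$ is contractible to force its $E_2$ to be $\Q$ concentrated at $(0,0)$---so the differentials are never computed explicitly.

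In short, both routes reach the same $E_2$ support. Your route is more elementary in spirit (no auxiliary map $F$), but the price is exactly the obstacle you flag: you must verify that the $d_1$ on the $q=0$ row really is the simplicial coboundary, and you must treat the case $\delta_{d(f)}=0$ separately (the ``self-intersection'' components are then disconnected Milnor fibres, so the clean $H^0$ computation breaks there---fortunately only in the top column, and in a total degree already on the list). The paper sidesteps both issues with the relative trick; indeed it remarks just before Lemma~\ref{E1_for_F} that your direct approach works but that it ``simplif[ies] matters'' to use the pair. One small point: the reduced form of the ICSS you write is not quite standard; it is cleanest to justify it as the relative sequence for $(\widetilde f,\widetilde f|_{\{\text{pt}\}})$, which gives reduced $H^0$ only in column $r=0$ and agrees with your formula because $\Alt_k$ kills the constant class for $k\ge 2$.
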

A simple corollary of this for the case $p=n+1$ is that the only non-trivial groups 
occur in dimension $p-1$. As stated earlier this is known for mono-germs since the disentanglement is homotopically equivalent to a wedge of spheres, see \cite{vancyc}.

To calculate the rational cohomology of the disentanglement we shall use the multi-germ version of Marar and Mond's description of multiple point spaces (Theorem \ref{mps_for_findet}) and
the Image Computing Spectral Sequence, see \cite{gomo, icss}, a relative version of which we now describe.

We will calculate cohomology groups over the rational numbers $\Q $ and
the group $S_k$ will act on $H^i(\widetilde{D}^k(f);\Q)$ in the natural way -- i.e., arising from the action on $\widetilde{D}^k(f)$. We can state and prove the results in this section for integer homology by using the more complicated techniques developed in \cite{go,loctop,icss}, but it is enough for the purposes of excellent unfoldings to use the simpler case of rational cohomology.
  
Define the idempotent operator $\Alt _k$ by
\[
\Alt _k = \frac{1}{k!} \sum_{\sigma \in S_k} \sign (\sigma) \sigma . 
\]
When we apply this operator to $H^i(\widetilde{D}^k(f);\Q)$ we produce what is called the {\em{alternating cohomology}} of $\widetilde{D}^k(f)$. The cohomology of the image of a map may be calculated from the alternating cohomology of its multiple point spaces:
\begin{theorem}[Cf.\ Proposition 2.3 of \cite{gomo}]
Let $f:X\to Y$ be a finite and proper subanalytic map and let $Z$ be a (possibly empty) subanalytic subset of $X$ such that $f|Z$ is also proper. Then, there exists a spectral sequence
\[
E_1^{r,q}= \Alt _{r+1} H^q(D^{r+1}(f), D^{r+1}(f|Z);\Q ) \Rightarrow
H^*(f(X),f(Z);\Q ) ,
\]
where the differential is induced from the natural map 
$\epsilon _{r+1,r}:D^{r+1}(f)\to D^r(f)$ given by 
$\epsilon _{r+1,r} (x_1, x_2, \dots , x_r, x_{r+1}) = (x_1, x_2, \dots , x_r)$.
\end{theorem}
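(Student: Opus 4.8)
The plan is to deduce the relative statement from the \emph{absolute} Image Computing Spectral Sequence (Proposition~2.3 of \cite{gomo}, see also \cite{icss}) by a short homological argument, exploiting that over $\Q$ the idempotent $\Alt$ is an \emph{exact} functor and so commutes with every cohomological operation that occurs. Concretely, I would realise $H^{*}(f(X);\Q)$ and $H^{*}(f(Z);\Q)$ as hypercohomology of explicit complexes of sheaves on $Y$ built from the constant sheaves of the multiple point spaces, compare these complexes, and read off the desired spectral sequence from the comparison.

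Since $f$ is finite and proper, for each $k$ the composite $\epsilon^{k}\colon D^{k}(f)\to\cdots\to D^{1}(f)=X\to Y$ of the coordinate-forgetting maps followed by $f$ is again finite and proper, so $R(\epsilon^{k})_{*}=(\epsilon^{k})_{*}$ and this functor is exact. The absolute ICSS is the hypercohomology spectral sequence of the complex of sheaves on $Y$
\[
\mathcal A^{\bullet}\colon\qquad 0\to\Alt_{1}(\epsilon^{1})_{*}\underline{\Q}_{D^{1}(f)}\to\Alt_{2}(\epsilon^{2})_{*}\underline{\Q}_{D^{2}(f)}\to\Alt_{3}(\epsilon^{3})_{*}\underline{\Q}_{D^{3}(f)}\to\cdots ,
\]
with $\mathcal A^{r}=\Alt_{r+1}(\epsilon^{r+1})_{*}\underline{\Q}_{D^{r+1}(f)}$ and differential the alternating sum $\sum_{i}(-1)^{i}\delta_{i}^{*}$ of the pullbacks along the face maps $\delta_{i}\colon D^{r+2}(f)\to D^{r+1}(f)$ that delete the $i$-th coordinate; on the alternating summand this equals the map induced by the single natural projection $\epsilon_{r+2,r+1}$. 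The content of \cite{gomo} is that the augmentation makes $\underline{\Q}_{f(X)}\to\mathcal A^{\bullet}$ a quasi-isomorphism; this is a stalkwise assertion, where $f^{-1}(y)$ is finite and the stalk complex reduces to an explicit combinatorial complex that is acyclic away from degree $0$. I would take this as given.

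Applying the same construction to $f|Z\colon Z\to Y$ gives a quasi-isomorphism $\iota_{*}\underline{\Q}_{f(Z)}\to\mathcal B^{\bullet}$ on $Y$ with $\mathcal B^{r}=\Alt_{r+1}(\epsilon^{r+1})_{*}\underline{\Q}_{D^{r+1}(f|Z)}$, where $\iota\colon f(Z)\hookrightarrow f(X)$ and everything is pushed forward to $Y$; here one uses that $f|Z$ is proper and $Z$ subanalytic, so that $f(Z)$ is closed in $f(X)$ and each $D^{r+1}(f|Z)$ is a closed subanalytic subset of $D^{r+1}(f)$. The $S_{r+1}$-equivariant closed inclusions $D^{r+1}(f|Z)\hookrightarrow D^{r+1}(f)$ are compatible with the face maps, so restriction of constant sheaves gives a map of complexes $\rho\colon\mathcal A^{\bullet}\to\mathcal B^{\bullet}$ lifting $\underline{\Q}_{f(X)}\to\iota_{*}\underline{\Q}_{f(Z)}$; since restriction of a constant sheaf to a closed subset is surjective and the exact functors $(\epsilon^{r+1})_{*}$ and $\Alt_{r+1}$ preserve surjections, $\rho$ is termwise surjective, and its kernel subcomplex $\mathcal D^{\bullet}:=\ker\rho$ satisfies
\[
\mathcal D^{r}=\Alt_{r+1}(\epsilon^{r+1})_{*}\,\underline{\Q}_{(D^{r+1}(f),\,D^{r+1}(f|Z))},
\]
where $\underline{\Q}_{(A,B)}$ denotes the relative constant sheaf of the pair $(A,B)$ (extension by zero of $\underline{\Q}$ from $A\setminus B$), with hypercohomology $H^{*}(A,B;\Q)$. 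The short exact sequence $0\to\mathcal D^{\bullet}\to\mathcal A^{\bullet}\to\mathcal B^{\bullet}\to0$, together with the known cohomology sheaves of $\mathcal A^{\bullet}$ and $\mathcal B^{\bullet}$ (concentrated in degree $0$, with surjective comparison map), shows that $\mathcal D^{\bullet}$ resolves $\ker(\underline{\Q}_{f(X)}\to\iota_{*}\underline{\Q}_{f(Z)})$, whose hypercohomology over $Y$ is $H^{*}(f(X),f(Z);\Q)$. Filtering $\mathcal D^{\bullet}$ by the external degree $r$ now gives a spectral sequence with
\[
E_{1}^{r,q}=\mathbb H^{q}(Y;\mathcal D^{r})=\Alt_{r+1}H^{q}\bigl(D^{r+1}(f),D^{r+1}(f|Z);\Q\bigr)
\]
(pulling the exact projector $\Alt_{r+1}$ outside the cohomology), abutting to $H^{r+q}(f(X),f(Z);\Q)$, with $d_{1}$ induced by the face maps, i.e.\ by $\epsilon_{r+1,r}$; this is the claimed spectral sequence.

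I do not expect a serious obstacle, since the construction is formal once the absolute case is granted: the genuinely deep input is the stalkwise acyclicity of the Goryunov--Mond complex, and that is already available in \cite{gomo, icss}. The one point requiring real care -- and the reason the theorem is stated rationally -- is the interchange of $\Alt$ with passage to relative cohomology and with pushforward, legitimate precisely because $\Alt$ is an exact (direct-summand) functor once $k!$ is invertible. A self-contained alternative would be to rerun the stalkwise computation of \cite{gomo} for the pair $(f^{-1}(y),\,Z\cap f^{-1}(y))$ directly, which comes down to the same observation -- the kernel of a surjection between two complexes each acyclic away from degree $0$ with invertible degree-$0$ comparison is acyclic -- so I would not pursue it separately.
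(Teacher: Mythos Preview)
Your proposal is correct and is precisely the route the paper itself indicates: the paper does not actually give a proof of this theorem, but simply states that ``a proof is found in \cite{icss} and can also be generalized from the one in \cite{gomo}.'' Your argument is exactly that generalisation, carried out in detail via the short exact sequence of alternating sheaf complexes $0\to\mathcal D^{\bullet}\to\mathcal A^{\bullet}\to\mathcal B^{\bullet}\to0$ and the exactness of $\Alt$ over $\Q$. One trivial slip: in your closing sentence you write ``invertible degree-$0$ comparison'' where you mean \emph{surjective} (as you correctly said earlier); invertibility would force $\mathcal D^{\bullet}$ to be acyclic rather than to resolve the relative constant sheaf.
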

A proof is found in \cite{icss} and can also be generalized from the one in \cite{gomo}. The precise details of the differential will not be required as our spectral sequences will be sparse. Alternatively the differential can be calculated from the terms of the sequence. 
A spectral sequence for a single map rather than a pair also exists if we take $Z=\emptyset $.

The disentanglement is the image of a finite and proper map and hence we can use the Image Computing Spectral Sequence to calculate the cohomology of the disentanglement. 
In \cite{gomo} it is proved that the sequence for the disentanglement map of a corank 1 finitely $\AA $-determined {\em{mono}}-germ collapses at $E_1$. Using the proof there and techniques from \cite{loctop}  it is possible to show that the sequence collapses at $E_2$ for the disentanglement map of a multi-germ.
However, we simplify matters by considering the spectral sequence for a pair of maps: the versal unfolding and the  disentanglement map -- the latter is a restriction of the former. 
As we shall see, the spectral sequence collapses at $E_2$ for this pair also.

Let $F:U\to W$ be the versal unfolding of $f$ such that $\widetilde{f}$ is a restriction of $F$ that gives the disentanglement.
\begin{lemma}
\label{E1_for_F}
The $E_1$ terms of the spectral sequence of $F$ are
\[
E_1^{r,q}(F) \cong 
\left\{ 
\begin{array}{ll}
\Q ^{\binom{s(f)}{r+1} }, & {\mbox{ for }} q= 0 {\mbox{ and }} 1\leq r+1 \leq s(f) , \\
0, & {\mbox{otherwise.}}
\end{array}
\right.
\]
The sequence collapses at $E_2$ and 
\[
E_{\infty }^{r,q} (F) \cong E_2^{r,q} (F) \cong
\left\{ 
\begin{array}{ll}
\Q & {\mbox{ for }} (r,q)= (0,0)  , \\
0 & {\mbox{ for }} (r,q)\neq (0,0).
\end{array}
\right.
\]

\end{lemma}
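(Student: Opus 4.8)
The plan is to compute $E_1^{r,q}(F)$ directly from the definition of the Image Computing Spectral Sequence applied to the versal unfolding $F:U\to W$, and then to identify the $E_2$ page and argue collapse. The crucial structural fact is that $F$ is a \emph{stable} map, so by Theorem~\ref{mps_for_stable} (together with the corollary on $\widetilde{D}^k(F,\PP)$) every multiple point space $D^{r+1}(F)=\widetilde{D}^{r+1}(F)$ is either empty or non-singular of the expected dimension. Because $F$ has the form $F(x,t)=(f_t(x),t)$ with $\widetilde f$ a restriction to a disjoint union $\coprod_{j=1}^{s(f)}\widetilde U_j$ of contractible opens, the total space $U$ is itself a disjoint union of contractible pieces, and its multiple point spaces decompose accordingly.

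First I would show that each nonempty $D^{r+1}(F)$ is contractible, or at least $\Q$-acyclic. The point is that $D^{r+1}(F)$ sits inside $U^{r+1}=\coprod \widetilde U_{j_1}\times\dots\times\widetilde U_{j_{r+1}}$, and since $F$ is a versal unfolding of a finitely $\AA$-determined germ, $\widetilde f$ is a small stabilisation whose source is a union of Milnor-type balls; the multiple point space $D^{r+1}(F)$ is then (a union over unordered $(r+1)$-subsets of branches of) the corresponding multiple point space of the stable map $\widetilde f$, which retracts onto a point because the relevant ICIS are smoothed and the source balls are contractible. Concretely, the connected components of $D^{r+1}(F)$ are indexed by the $\binom{s(f)}{r+1}$ choices of which branches meet, each component being contractible; components where branch indices repeat would have higher multiplicity but these are absorbed into the $\widetilde D^k$ of a single branch, which for the \emph{stable} map $\widetilde f$ is again smooth and contractible. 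Hence $H^q(D^{r+1}(F);\Q)=\Q^{\binom{s(f)}{r+1}}$ for $q=0$ (when $1\le r+1\le s(f)$) and $0$ otherwise.

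Next I would apply the alternating projector $\Alt_{r+1}$. Since $H^0(D^{r+1}(F);\Q)$ is spanned by the indicator functions of the connected components, and $S_{r+1}$ permutes these components among themselves (fixing each component setwise only when the $r+1$ branch indices are forced to be distinct, in which case $S_{r+1}$ acts by permuting the factors within that component), a short representation-theoretic computation gives $\Alt_{r+1}H^0(D^{r+1}(F);\Q)\cong\Q^{\binom{s(f)}{r+1}}$: each orbit of distinct-branch components contributes exactly one copy of the sign representation after applying $\Alt_{r+1}$, because on a product of distinct contractible pieces the $S_{r+1}$-action on $H^0$ is trivial and $\Alt_{r+1}$ of the trivial representation of $S_{r+1}$ is zero for $r+1\ge 2$ — so in fact I must be careful here: it is $r=0$ that survives. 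This is exactly the subtlety that makes the computation work: for $r+1\ge 2$ the alternating part of the trivial-action $H^0$ vanishes, giving $E_2^{r,0}(F)=0$ for $r\ge 1$, while the $E_1$ page before taking homology still carries the full $\Q^{\binom{s(f)}{r+1}}$. So I would present $E_1^{r,q}(F)$ as claimed (the reduced-cohomology-of-a-simplex pattern $\Q^{\binom{s(f)}{r+1}}$ in the $q=0$ row), then observe that the differential $d_1$ is the simplicial coboundary of the full simplex on $s(f)$ vertices, so the $q=0$ row is the augmented cochain complex of a simplex, which is acyclic except in degree $0$ where it gives $\Q$. This yields $E_2^{r,q}(F)=\Q$ at $(0,0)$ and $0$ elsewhere, and since the whole $E_2$ page is concentrated at a single spot there is no room for higher differentials, so $E_\infty=E_2$ and the sequence collapses at $E_2$.

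The main obstacle I anticipate is establishing the $\Q$-acyclicity of the nonempty $D^{r+1}(F)$ cleanly — i.e., justifying that for the \emph{stable} disentanglement map $\widetilde f$ the multiple point spaces $\widetilde D^{r+1}(\widetilde f)$ are contractible (not merely smooth). For $n\ge p-1$ this follows from the Milnor-fibre wedge-of-spheres results, but in the range $n<p-1$ one does not have such a statement for $\widetilde f$ itself; however here $\widetilde D^{r+1}(\widetilde f)$ is smooth and is a small representative of a \emph{smoothing} of an ICIS sitting over a contractible base, so the correct statement is that it is a deformation retract of its central fibre situation — more precisely it is a Milnor fibre of a nonsingular map, hence contractible. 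I would phrase this using that $F$ being a versal unfolding makes each $\widetilde D^{r+1}(F)$ a trivial family over a contractible parameter space, so $\widetilde D^{r+1}(\widetilde f)$ retracts onto $\widetilde D^{r+1}(F)\cap\{\text{distinguished fibre}\}$, which is a point set of the expected cardinality $\binom{s(f)}{r+1}$; contractibility of each component then follows from contractibility of the $\widetilde U_j$. Everything else is bookkeeping with binomial coefficients and the sign representation.
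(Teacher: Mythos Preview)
Your proposal has the right skeleton, but there are two genuine gaps, and the paper resolves both with a single device you do not use.

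\textbf{The $\Alt$ computation is muddled.} You claim $H^0(D^{r+1}(F);\Q)=\Q^{\binom{s}{r+1}}$ and then oscillate on what $\Alt_{r+1}$ does to it, eventually writing that ``for $r+1\ge 2$ the alternating part of the trivial-action $H^0$ vanishes, giving $E_2^{r,0}(F)=0$''. This confuses $E_1$ with $E_2$: the term $E_1^{r,0}$ \emph{is already} $\Alt_{r+1}H^0$, so if that vanished the lemma would be false. The underlying error is the indexing of components. Since $U=\coprod_j U_j$, the space $D^{r+1}(F)$ decomposes over \emph{ordered} tuples $(j_1,\dots,j_{r+1})$ of branch labels, with repetitions allowed (the pieces with repeats are the $\widetilde D^{r+1}$ of sub-multi-germs, which may well be nonempty for the stable $F$). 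Thus $H^0$ is in general strictly larger than $\Q^{\binom{s}{r+1}}$, and $S_{r+1}$ acts on it by permuting components, not trivially. An orbit of a tuple with all labels distinct is free, contributes one copy of the regular representation, and hence one copy of $\Q$ to $\Alt_{r+1} H^0$; an orbit of a tuple with a repeated label has isotropy containing a transposition and contributes nothing. Counting free orbits gives exactly $\binom{s}{r+1}$. This is precisely what the paper does, packaged as a comparison with the multiple point spaces of the finite map $F'\colon\{z_1,\dots,z_s\}\to\{0\}$.

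\textbf{Contractibility of $D^{r+1}(F)$.} You flag this as your main obstacle and do not resolve it; the suggested fix conflates $D^{r+1}(F)$ with $\widetilde D^{r+1}(\widetilde f)$ (the latter is a Milnor \emph{fibre} and need not be contractible). The paper's move is the one you are missing: Whitney stratify the image of $F$ so that every $\epsilon_k(D^k(F))$ is a union of strata, use the conic structure theorem to retract the image onto $\{0\}$ along a stratified vector field, and \emph{lift} this retraction through $\epsilon_k$. This retracts each $D^k(F)$ onto its fibre over $0$, namely the finite $S_k$-set of $k$-tuples from $\underline z$, simultaneously delivering $H^{>0}=0$ and the correct $S_k$-set for the $\Alt H^0$ count above.

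Your approach to $E_2$ --- identifying the $q=0$ row with the augmented cochain complex of the full $(s{-}1)$-simplex --- is a legitimate alternative. The paper avoids computing $d_1$ altogether: since $E_1$ is concentrated in the row $q=0$ all higher differentials vanish, so $E_2=E_\infty$; and since the sequence converges to $H^*(F(U);\Q)\cong\Q$, the $E_2$ page is forced to be $\Q$ at $(0,0)$ and zero elsewhere. Your argument is more explicit; the paper's is shorter and needs nothing about the differential.
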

\begin{proof}
Since $F$ is a complex analytic map the maps $\epsilon _k:D^k(F) \to W$ given by $\epsilon _k(x_1,\dots , x_k)=f(x_1)$ are also complex analytic. As
these maps are proper and finite, their images are complex analytic subsets of $W$. Hence, 
we can Whitney stratify the image of $F$ so that for all $k$ the image of $D^k(F)$ under $\epsilon _k$ is
a union of strata.

Now, by definition of the disentanglement, the image of $F$ is contractible to the space $\{ 0 \}$. This comes from the conic structure theorem for Whitney stratified spaces. The retraction resulting from the conic structure theorem can be achieved via a stratified vector field on $W$. This can be lifted to $D^k(F)$ via $\epsilon _k$ and hence $D^k(F)$ retracts onto
$D^k(F) \cap (F^{-1}(0))^k$. Any point in this latter set is a $k$-tuple chosen from $\underline{z}=(z_1,z_2, \dots ,z_s)$. 
The $k$th multiple point space for the map $F' :\{ z_1, z_2, \dots , z_s \} \to \{ 0 \} $ given by restriction of $F$ will be a subset of $D^k(F) \cap (F^{-1}(0))^k$. We show that they have the same alternating cohomology.

First note that if a $k$-tuple from $\underline{z}$ has any repetitions, then the alternating cohomology of the orbit of this
point under the $S_k$-action is trivial (as at least one simple permutation fixes it). 

Hence if $k\leq s$ and the $k$-tuple has no repetitions,
then the $k$-tuple gives a point in $D^k(F) \cap (F^{-1}(0))^k$ and the alternating 
cohomology of its orbit is $\Q $. If $k\geq s$, then the $k$-tuple has repetitions and
so its orbit does not have any alternating cohomology.

Therefore, the alternating cohomology of $D^k(F) \cap (F^{-1}(0))^k$ and $D^k(F')$
are the same, see Theorem~2.3 and Example~2.5 of \cite{loctop}. Hence, the $E_1$ page of the spectral sequence has the form stated.
This page is depicted in Figure~\ref{E1fig_for_F}.
\begin{figure}
\begin{center}
\begin{tabular}{c|c|c|c|c|c|c|c}
$H^{alt}_2$ & 0 & 0 & 0 & \dots & 0 & 0 & 0  \\
$H^{alt}_1$ & 0 & 0 & 0 & \dots & 0 & 0 & 0  \\
$H^{alt}_0$ & $\Q ^s$ & 
$\Q ^{\binom{s}{2}}$ &
$\Q^{\binom{s}{3}}$ & 
\dots  & $\Q ^s$ & $\Q $ & 0 \\
\hline 
 & $U$ & $D^2(F)$ & $D^3(F)$ & \dots & $D^{s-1}(F)$ & $D^s(F)$ & $D^{s+1}(F)$ 
\end{tabular}
\end{center} 
\caption{\label{E1fig_for_F}$E_1$ page for the map $F$}
\end{figure}

Given the form of $E_1^{*,*}$ the sequence must collapse at the $E_2$ page as all the 
differentials are trivial from this page onwards.

Since $E_*^{r,q}(F)$ converges to $H^*(F(U);\Q ) \cong H^0(\{ 0 \} ;\Q ) \cong \Q $ then 
$E_2^{r,q}$ has the form stated.
\end{proof}

Suppose that $f:(\C ^n,\underline{z})\to (\C ^p,0)$, $n<p$, is a finitely $\AA $-determined multi-germ with versal unfolding $F$ and disentanglement map $\widetilde{f}$. 
Since the disentanglement map of a finitely $\AA $-determined map-germ is stable, $\dim \widetilde{D}^{k}(\widetilde{f})=nk-p(k-1)$ by Theorem~\ref{mps_for_stable}. Hence, if $k> \frac{p}{p-n}$, then $\dim \widetilde{D}^{k}(\widetilde{f})=\emptyset $.

We now describe the $E_1$ page of the spectral sequence for finitely $\AA $-determined multi-germs. Note how sparse it is.
\begin{lemma}
\label{E1sparse}
Suppose that $f:(\C ^n,\underline{z})\to (\C ^p,0)$, $n<p$, is a corank $1$ finitely $\AA $-determined multi-germ, with versal unfolding $F$ and disentanglement map $\widetilde{f}$. 
As before let $d(f)=\sup \{ k \,  | \, \widetilde{D}^k(\widetilde{f}) \neq \emptyset \}$ and $s(f)$ be the number of branches of $f$.

Then,
\[
E_1^{r,q} =
\left\{
\begin{array}{ll}
\Alt _{r+1} H^{\dim \widetilde{D}^{r+1}(\widetilde{f})+1}(D^{r+1}(F), \widetilde{D}^{r+1}(\widetilde{f});\Q ) , & 
{\mbox{for }} q=\dim \widetilde{D}^{r+1}(\widetilde{f})+1\geq 0 \\
 & {\mbox{ and }}r+1\leq d(f),\\
\Q ^{\binom{s(f)}{r+1}} , & q=0 {\mbox{ and }} r+1>d(f), \\
0, & {\mbox{otherwise.}} 
\end{array}
\right.
\]
Here we define $\dim \emptyset =-1$. 
\end{lemma}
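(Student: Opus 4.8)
The plan is to identify $E_1^{r,q}$ directly from the relative Image Computing Spectral Sequence, using two inputs that are already available: the homotopy type of $D^{r+1}(F)$ computed in the proof of Lemma~\ref{E1_for_F}, and the fact that $\widetilde D^{r+1}(\widetilde f)$ is the Milnor fibre of the isolated complete intersection singularity $\widetilde D^{r+1}(f)$ of Corollary~\ref{mps_for_findet}. These two inputs will be combined through the long exact sequence of the pair $(D^{r+1}(F),\widetilde D^{r+1}(\widetilde f))$, to which the exact operator $\Alt_{r+1}$ may be applied.

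First I would invoke the relative spectral sequence with $f=F\colon U\to W$ and $Z$ the source of $\widetilde f$, giving $E_1^{r,q}=\Alt_{r+1}H^q(D^{r+1}(F),D^{r+1}(\widetilde f);\Q)$; since both $F$ and $\widetilde f$ are stable one has $D^{r+1}(F)=\widetilde D^{r+1}(F)$ and $D^{r+1}(\widetilde f)=\widetilde D^{r+1}(\widetilde f)$. Next I would record that, by the proof of Lemma~\ref{E1_for_F}, $D^{r+1}(F)$ deformation retracts onto the finite set of $(r+1)$-tuples from $\underline z$, so $H^q(D^{r+1}(F);\Q)=0$ for $q>0$ while $\Alt_{r+1}H^0(D^{r+1}(F);\Q)\cong\Q^{\binom{s(f)}{r+1}}$, spanned by the free $S_{r+1}$-orbits of tuples with pairwise distinct entries; and, on the other side, that Theorem~\ref{mps_for_stable} makes $\widetilde D^{r+1}(\widetilde f)$ either empty --- exactly when $r+1>d(f)$ --- or nonsingular and pure of dimension $d:=n(r+1)-pr\ge 0$, in which case, being a disjoint union (over the germ-components of $\widetilde D^{r+1}(f)$) of Milnor fibres of isolated complete intersection singularities, each of its connected components is $(d-1)$-connected by Hamm's theorem, so $H^q(\widetilde D^{r+1}(\widetilde f);\Q)$ is concentrated in degrees $0$ and $d$ (a finite set of points when $d=0$).

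The case $r+1>d(f)$ is then immediate, since the pair reduces to $(D^{r+1}(F),\emptyset)$. For $r+1\le d(f)$ I would apply $\Alt_{r+1}$ to the long exact sequence of the pair; using the two concentration statements this collapses to $E_1^{r,0}=\ker(\Alt_{r+1}\rho)$, $E_1^{r,1}=\operatorname{coker}(\Alt_{r+1}\rho)$, and $E_1^{r,q}\cong\Alt_{r+1}H^{q-1}(\widetilde D^{r+1}(\widetilde f);\Q)$ for $q\ge 2$ --- hence zero unless $q=d+1$ --- where $\rho\colon H^0(D^{r+1}(F);\Q)\to H^0(\widetilde D^{r+1}(\widetilde f);\Q)$ is restriction (the surviving slot $q=d+1$ coincides with the relative group in the statement by the same pair sequence, and is $\operatorname{coker}(\Alt_{r+1}\rho)$ when $d=0$). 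So everything comes down to proving that $\Alt_{r+1}\rho$ is injective, and is in fact an isomorphism when $d\ge 1$.

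This last point is the one I expect to be the only real work. I would argue it by the same $S_{r+1}$-equivariant bookkeeping of connected components used in the proof of Lemma~\ref{E1_for_F} (and in \cite[Thm.~2.3 and Ex.~2.5]{loctop}): at the origin of the unfolding every branch of $F$ maps to $0$, so each $(r+1)$-tuple of pairwise distinct points of $\underline z$ is an honest $(r+1)$-fold point of $F$ and the germ of $\widetilde D^{r+1}(f)$ there is a $d$-dimensional isolated complete intersection singularity, whose Milnor fibre is a single connected component of $\widetilde D^{r+1}(\widetilde f)$ when $d\ge 1$ and a finite cluster of points when $d=0$. Since $\Alt_{r+1}$ annihilates the class of any component with a transposition in its $S_{r+1}$-stabiliser, on each side only the free orbits over such distinct tuples survive, and $\rho$ carries the orbit of $\{z_{i_1},\dots,z_{i_{r+1}}\}$ onto the orbit of the component(s) of $\widetilde D^{r+1}(\widetilde f)$ lying over it; hence $\Alt_{r+1}\rho$ is, up to the evident identification of orbits, an isomorphism $\Q^{\binom{s(f)}{r+1}}\to\Q^{\binom{s(f)}{r+1}}$ when $d\ge 1$, and injective in general. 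The remaining verifications --- exactness of $\Alt_{r+1}$ on $\Q$-cohomology and the degree count in the pair sequence --- are routine.
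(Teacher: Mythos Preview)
Your argument is correct, but you take a longer route than the paper does. The paper's proof is essentially two lines: for $r+1\le d(f)$ it observes that $D^{r+1}(F)$ is not merely homotopy equivalent to a finite set but is, on the nose, a disjoint union of Milnor balls for the isolated complete intersection germs of $\widetilde D^{r+1}(f)$, with $\widetilde D^{r+1}(\widetilde f)$ sitting inside as the corresponding Milnor fibres. For each such pair (contractible ball, Milnor fibre of an ICIS of dimension $d$) the relative cohomology is concentrated in degree $d+1$ immediately, and this survives $\Alt_{r+1}$. No long exact sequence and no analysis of the restriction map $\rho$ on $H^0$ is needed.

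Your approach computes the absolute cohomology of the two spaces separately and then reassembles them via the long exact sequence, which forces you to prove that $\Alt_{r+1}\rho$ is injective (an isomorphism when $d\ge 1$). That step is correct --- indeed when $d\ge 1$ the full map $\rho$ on $H^0$ is already an $S_{r+1}$-equivariant isomorphism, since each Milnor ball contains exactly one connected Milnor fibre --- but it is extra work that the paper avoids by exploiting the (ball, fibre) structure of the pair directly. The upshot is the same; the paper's packaging is simply more economical because it never unpacks the pair.
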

\begin{remark}
A schematic version of the $E_1$ page is given in Figure \ref{E1fig_for_pair} for $n=10$ and $p=13$.
We let $s=s(f)$ be the number of branches of $f$.
\begin{figure}
\begin{center}
\begin{tabular}{c|c|c|c|c|c|c|c}
$H^{alt}_9$ & 0 & 0 & 0 & 0 & 0 & 0 & 0  \\
$H^{alt}_8$ & 0 & $\Q ^{r_1}$ & 0 & 0 & 0 & 0 & 0  \\
$H^{alt}_7$ & 0 & 0 & 0 & 0 & 0 & 0 & 0  \\
$H^{alt}_6$ & 0 & 0 & 0 & 0& 0 & 0 & 0  \\
$H^{alt}_5$ & 0 & 0 &  $\Q ^{r_2}$ & 0 & 0 & 0 & 0  \\
$H^{alt}_4$ & 0 & 0 & 0 & 0 & 0 & 0 & 0  \\
$H^{alt}_3$ & 0 & 0 & 0 & 0 & 0 & 0 & 0  \\
$H^{alt}_1$ & 0 & 0 & 0 & $\Q ^{r_3}$ & 0 &  0 & 0  \\
$H^{alt}_2$ & 0 & 0 & 0 & 0 & 0 & 0 & 0  \\
$H^{alt}_0$ & 0 & 0 & 0 & 0 & $\Q ^{\binom{s}{5}} $ & $\Q ^{\binom{s}{6}} $ & $\Q ^{\binom{s}{7}} $  \\
\hline 
 & $D^1$ & $D^2$ & $D^3$ & $D^4$  & $D^5$ & $D^6$ & $D^7$ 
\end{tabular}
\end{center} 
\caption{\label{E1fig_for_pair}$E_1$ page for the pair $(F,\widetilde{f})$}
\end{figure}
\end{remark}
\begin{proof}
When $\widetilde{D}^k(\widetilde{f})\neq \emptyset $  
we have that $D^k(F)$ is non-singular (Theorem~\ref{mps_for_stable}) and forms a disjoint union of Milnor balls for the isolated complete intersection singularities defined by $\widetilde{D}^k(f)$, (Corollary~\ref{mps_for_findet}). Since $\widetilde{f}$ is stable the components of $\widetilde{D}^k(\widetilde{f})$ will be the Milnor fibres for these singularities.
This implies that $H^*(D^k(F),\widetilde{D}^k(\widetilde{f});\Q )$ is non-trivial only in dimension $\dim \widetilde{D}^k(\widetilde{f}) +1= nk-p(k-1) +1 $. Obviously the same is true when we apply the $\Alt _k$ functor.

Hence, for $k$ with $\widetilde{D}^k(\widetilde{f})\neq \emptyset $, the $E_1^{k+1,q}$ terms of the spectral sequence have the desired form. 

When  $\widetilde{D}^k(\widetilde{f})= \emptyset $ we have 
$\Alt _k H^*(D^k(F),\widetilde{D}^k(\widetilde{f});\Q ) \cong \Alt _k H^*(D^k(F);\Q )$. 
These latter groups are described in Lemma~\ref{E1_for_F}.

Hence, the $E_1$ terms of the spectral sequence are as described in the statement of the lemma.
\end{proof}

\begin{lproof}{of Theorem \ref{dis_cohomology}}
The spectral sequence in the previous lemma collapses at $E_2$: The only non-trivial differentials at $E_1$ are those on the
bottom row and as $E_1^{r,0}$ comes from the exact sequence in Lemma~\ref{E1_for_F} for $r+1>d(f)$. Hence, if $s(f)>d(f)$, then
$E_1^{r,0}$ is non-zero for $r+1>d(f)$.

The sequence will now collapse at $E_2$ since the differentials are all equal to the zero map. Thus we read off the cohomology of the pair $(F(U), \widetilde{f}(\widetilde{U}))$ along the diagonals in the usual way. The only non-trivial groups $H^i(F(U),\widetilde{f} (\widetilde{U});\Q )$ are when 
\begin{eqnarray*}
i&=&r+\dim \widetilde{D}^{r+1}(\widetilde{f})+1 \\
&=& r + n(r+1)-p(r+1-1)+1 \\
&=&p-(p-n-1)(r+1)
\end{eqnarray*}
for $2\leq r+1\leq d(f)$.

Thus,  $H^i(F(U), \widetilde{f}(\widetilde{U});\Q)\neq 0 $ only for $i=p-(p-n-1)(r+1)$ where 
$2\leq r+1 \leq d(f)$. 

The only other possible non-trivial entry is $E_2^{d(f)+1,0}$ and this accounts for all $E^{r,q}$ for $r+1>d(f)$.

We know that the image of $F$ is contractible, thus 
\[
H^i(F(U), \widetilde{f}(\widetilde{U});\Q ) \cong \widetilde{H}^{i-1}(\widetilde{f}(\widetilde{U});\Q )
= \widetilde{H}^{i-1}(\dis (f) ;\Q ) .
\]
From this the theorem follows.
\end{lproof}

\begin{remark}
The theorem holds in greater generality since essentially we have only used the idea that the multiple point spaces of $f$ have smoothings for which we can calculate the cohomology.
Further exploration of this is made in Theorem \ref{closingtheorem}.
\end{remark}

\begin{example}
\label{quadpt}
Suppose that $f:(\C ^2, \{ x_1, x_2, x_3, x_4\} )\to (\C ^3,0)$ defines an ordinary quadruple point, i.e., the transverse meeting, except at $0$, of four $2$-planes. Then, $f$ is finitely $\AA $-determined and a disentanglement can be given by just moving one of the planes slightly. That is, the quadruple point is `split' into four triple points. Here we have $s(f)=4$ but 
$\widetilde{D}^4(\widetilde{f}))=\emptyset $ so $s(f)>d(f)$.
\end{example}
\begin{remark}
For the ordinary quadruple point in Example \ref{quadpt} the non-trivial alternating cohomology group arises from the alternating cohomology of the versal unfolding as the $4$th multiple point space of the disentanglement is empty.  
\end{remark}

Each column of the $E_1$ page of the spectral sequence has only one non-trivial term and we will give this a special name.
\begin{definition}
The {\em{$k$th alternating Milnor number}} of $f$ at $y\in \C ^p$, denoted $\mu ^{alt}_k (f,y)$, is 
defined by 
\[
\mu ^{alt}_k (f,y)=
\left\{ 
\begin{array}{ll}
\Alt _k H^*(D^k(F), \widetilde{D}^k(\widetilde{f}) ;\Q ), & {\mbox{ if }} k\leq d(f), \\
\left| \sum_{l=d(f)+1}^{s(f)} (-1)^l \binom{s(f)}{l} \right|, & {\mbox{ if }} k= d(f)+1 {\mbox{ and }} s(f)>d(f) ,\\
0, & {\mbox{otherwise.}}
\end{array}
\right.
\] 
\end{definition}  

\begin{remark}
\label{topmufiszero}
Note that if $s(f)\leq d(f)$, then $\mu _{d(f)+1}^{alt} (f)=0$. Note also that Example~\ref{quadpt} shows that $\mu _{d(f)+1}^{alt} (f)$ may be non-zero. 
\end{remark}

We can now add the $\mu ^{alt}_k$ together to get an analytic invariant for the germ of $f$ at $y$.
\begin{definition}
The {\em{image Milnor number}} of $f$ at $y$, denoted $\mu _I(f,y)$, is the sum of the ranks of the non-zero homology groups of the disentanglement. That is,
\[
\mu _I(f,y) = \sum _k \mu ^{alt}_k (f,y).
\]
If $y=0$, then we will use the notation $\mu _I(f)$.
\end{definition}

\begin{remarks}
\begin{enumerate}
\item The image Milnor number was first defined by Mond in \cite{vancyc} for mono-germs with $p=n+1$ by counting the  number of spheres in the bouquet of spheres for the disentanglement.  
The definition here uses rational cohomology for multi-germs with $n\leq p$ and hence coincides with Mond's number in his case. 

\item For $p\neq n+1$, Theorem~\ref{dis_cohomology} gives that the $k$th alternating Milnor number, $\mu ^{alt}_k (f,y)$, is just the dimension of the group $\widetilde{H}^{p-(p-n-1)k-1}(\dis (f);\Q) $ for $k\leq d(f)$ and equal to the dimension of 
$\widetilde{H}^{d(f)+1}(\dis (f);\Q) $ for $k= d(f)+1$. 
\end{enumerate}
\end{remarks}

We are now in a position to find the alternating Milnor number and to this end will generalise Marar's formula in \cite{marwark}. 
Generalised formulae were also given in \cite{mm} but, due to a couple of minor errors, these are incorrect as  one can see for the calculation of triple points of a corank 1 map from $(\C ^2,0)$ to $(\C ^3,0)$. The correct formula is given in Example~\ref{correct_examples}. Other generalisations can be found in \cite{genim}.

First we need to investigate the alternating cohomology of an $S_k$-invariant isolated complete intersection singularity. We use this to find a formula in the mono-germ case which is then applied in the multi-germ case.

Let $S_k$ act on $\C ^m\times \C ^k$ by permutation of the last $k$ coordinates. Let $h:\C ^m\times \C ^k \to \C ^r$ define an $S_k$-invariant isolated complete intersection at $0$ given by $S_k$-invariant defining equations.

For an $S_k$-invariant set $Z\subseteq \C^m\times \C ^k$ we denote by $Z^\sigma $ the fixed point set of $Z$ with respect to $\sigma \in S_k$.

Suppose that for all $\sigma \in S_k$ that $h|(\C^m\times \C ^k)^\sigma $ defines an isolated complete intersection singularity. Let $X$ be a Milnor ball (open or closed -- it doesn't matter which in these calculations as the resulting spaces are homotopically equivalent) for $h$ and denote by $\MF $ the (open or closed) Milnor fibre of $h$. Note that $\MF ^\sigma $ is the Milnor fibre of $h|X^\sigma $.

\begin{proposition}
\label{alt_of_pair}
The group $S_k$ acts on the cohomology of the pair $(X,\MF )$ and we have
\[
\dim \Alt _k H^{\dim \MF +1}(X,\MF ;\Q )
=  \frac{1}{k!} 
\left( \sum_{\sigma {\text{ s.t.\ }} \MF ^{\sigma } = \emptyset } \mu (h|X^\sigma )
+ 
(-1)^{\dim \MF  +1} 
\sum_{\sigma {\text{ s.t.\ }} \MF ^{\sigma } \neq  \emptyset } \sign (\sigma )
\right) .
\]
This equality also holds in the case that $\MF =\emptyset $ and if we define $\dim \emptyset =-1$.
\end{proposition}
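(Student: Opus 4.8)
The plan is to compute the dimension of the alternating part of the cohomology group $H^{\dim\MF+1}(X,\MF;\Q)$ by passing to an Euler-characteristic-style identity via the Lefschetz fixed point formula, exploiting the fact that, since $h$ and all its restrictions to fixed loci define isolated complete intersection singularities, the only interesting cohomology of each pair $(X^\sigma,\MF^\sigma)$ sits in a single degree. First I would recall that for an ICIS the Milnor ball $X^\sigma$ is contractible and $\MF^\sigma$ is $(\dim\MF^\sigma-1)$-connected, so $\widetilde H^*(\MF^\sigma;\Q)$ is concentrated in degree $\dim\MF^\sigma$ with rank the Milnor number $\mu(h|X^\sigma)$, and hence (by the long exact sequence of the pair) $H^*(X^\sigma,\MF^\sigma;\Q)$ is concentrated in degree $\dim\MF^\sigma+1$, again of rank $\mu(h|X^\sigma)$; when $\MF^\sigma=\emptyset$ the pair is just $(X^\sigma,\emptyset)$, which has the reduced-plus-unreduced cohomology of a point, i.e.\ $\Q$ in degree $0$, and with the convention $\dim\emptyset=-1$ this is still ``degree $\dim\MF^\sigma+1=0$,'' so the single-degree statement is uniform. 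In particular the equivariant Euler characteristic of $(X^\sigma,\MF^\sigma)$ as a virtual $S_k$-representation evaluated at $\sigma$ reduces to one signed term.

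Next I would apply the character-theoretic projection formula: for any finite group $G$ acting on a finite-dimensional graded $\Q$-vector space $V^\bullet$,
\[
\dim \Alt_k V^\bullet_{j} = \frac{1}{k!}\sum_{\sigma\in S_k}\sign(\sigma)\,\operatorname{tr}(\sigma\mid V^\bullet_j),
\]
and since here $V^\bullet=H^*(X,\MF;\Q)$ is concentrated in the single degree $j=\dim\MF+1$, the alternating dimension we want equals $\frac{1}{k!}\sum_\sigma \sign(\sigma)\operatorname{tr}(\sigma\mid H^{\dim\MF+1}(X,\MF;\Q))$. The key device is now the equivariant Lefschetz fixed point theorem (or simply the fact that $\sum_i(-1)^i\operatorname{tr}(\sigma\mid H^i(X,\MF;\Q))$ equals the Euler characteristic of the fixed-point pair $(X^\sigma,\MF^\sigma)$): because all cohomology of $(X,\MF)$ lies in one degree of parity $(-1)^{\dim\MF+1}$, we get $\operatorname{tr}(\sigma\mid H^{\dim\MF+1}(X,\MF;\Q)) = (-1)^{\dim\MF+1}\chi(X^\sigma,\MF^\sigma)$. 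Then I split the sum over $\sigma$ according to whether $\MF^\sigma$ is empty: when $\MF^\sigma=\emptyset$ we have $\chi(X^\sigma,\emptyset)=\chi(X^\sigma)=1$ if $X^\sigma\neq\emptyset$, but more usefully, writing it through the single nonzero group, $(-1)^{\dim\MF+1}\chi(X^\sigma,\MF^\sigma)$; I would instead argue directly that in the empty-$\MF^\sigma$ case the contribution is $\sign(\sigma)\cdot(-1)^{\dim\MF+1}\cdot(-1)^{\dim\MF^\sigma+1}\mu(h|X^\sigma)$-type bookkeeping collapses, and one checks the signs so that it becomes $+\mu(h|X^\sigma)$ — the parity shift $(-1)^{\dim\MF+1}$ from the global concentration degree cancels against the $(-1)^{\dim\MF^\sigma+1}$ coming from $\chi(X^\sigma,\MF^\sigma)$ together with $\sign(\sigma)^2=1$ and the observation that $\MF^\sigma=\emptyset$ forces $\sign(\sigma)=(-1)^{\dim\MF+1-(\dim\MF^\sigma+1)}$ up to the relevant parity (this is the crux of why the two families of $\sigma$ contribute with the shapes displayed). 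For $\sigma$ with $\MF^\sigma\neq\emptyset$, $\chi(X^\sigma,\MF^\sigma)=(-1)^{\dim\MF^\sigma+1}\mu(h|X^\sigma)$ does not obviously simplify, but here the relevant fact is that for the generic such $\sigma$ the fixed locus dimension drops by exactly the number of transpositions, matching $\dim\MF$, and the $\mu$-terms telescope; I would instead handle this family by a separate, cleaner route, recognizing that the sum $\sum_{\MF^\sigma\neq\emptyset}\sign(\sigma)$ is precisely what remains once the concentrated-degree trace formula is unwound, giving the second displayed term with its prefactor $(-1)^{\dim\MF+1}$.

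The honest way to present all this is to run the equivariant Euler characteristic identity once: $\sum_\sigma \sign(\sigma)\sum_i(-1)^i\operatorname{tr}(\sigma\mid H^i(X,\MF;\Q)) = \sum_\sigma\sign(\sigma)\chi(X^\sigma,\MF^\sigma)$, then use concentration in degree $\dim\MF+1$ on the left to pull out $(-1)^{\dim\MF+1}k!\dim\Alt_k H^{\dim\MF+1}(X,\MF;\Q)$, and on the right evaluate $\chi(X^\sigma,\MF^\sigma)$ case by case: it is $\chi(X^\sigma)-\chi(\MF^\sigma)$, which equals $1$ (so $(-1)^{\dim\MF^\sigma}\cdot(-1)\cdot(-\mu)$... ) when $\MF^\sigma=\emptyset$ with $X^\sigma\neq\emptyset$, and equals $(-1)^{\dim\MF^\sigma+1}\mu(h|X^\sigma)$ when $\MF^\sigma\neq\emptyset$; rearranging gives the claimed formula. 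The degenerate global case $\MF=\emptyset$ (so $\dim\MF=-1$) is handled by the same identity with the convention in place, since then $H^*(X,\MF;\Q)=H^*(X;\Q)$ is concentrated in degree $0=\dim\MF+1$.

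The main obstacle I anticipate is \emph{the sign bookkeeping in the empty-$\MF^\sigma$ case}: one must verify that $\MF^\sigma=\emptyset$ forces a definite relation between $\sign(\sigma)$, the parity of $\dim\MF$, and the parity of $\dim X^\sigma$ (equivalently the codimension of the fixed locus, i.e.\ $k$ minus the number of cycles of $\sigma$), so that the signed Milnor numbers enter with a uniform $+$ sign as stated, rather than with $\sigma$-dependent signs. This hinges on the precise relation $\dim\MF^\sigma = \dim\MF + (\#\text{cycles of }\sigma) - k$ from the corollary to Theorem~\ref{mps_for_stable} (the $\sum_i\alpha_i - k$ shift) together with $\sign(\sigma)=(-1)^{k-\#\text{cycles}}$, and the fact that $\MF^\sigma=\emptyset$ exactly when this predicted dimension is negative; putting those two parity facts side by side is where the argument must be done carefully.
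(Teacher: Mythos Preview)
Your overall strategy --- character projection plus the Lefschetz fixed-point identity to reduce everything to $\chi(X^\sigma,\MF^\sigma)$, then a case split on whether $\MF^\sigma$ is empty --- is exactly what the paper does. However, you have the two cases reversed, and this is not entirely your fault: the displayed formula in the Proposition has the conditions on the two sums swapped by a typo. The formula the paper's proof actually establishes, and the one quoted verbatim downstream in Theorem~\ref{marar_formula}, is
\[
\dim\Alt_k H^{\dim\MF+1}(X,\MF;\Q)=\frac{1}{k!}\Bigl(\sum_{\sigma\ \text{s.t.}\ \MF^\sigma\neq\emptyset}\mu(h|X^\sigma)\;+\;(-1)^{\dim\MF+1}\!\!\sum_{\sigma\ \text{s.t.}\ \MF^\sigma=\emptyset}\sign(\sigma)\Bigr).
\]

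With this correction your confusion dissolves. When $\MF^\sigma=\emptyset$ one has $\chi(X^\sigma,\MF^\sigma)=\chi(X^\sigma)=1$, so after multiplying by $(-1)^{\dim\MF+1}$ the contribution is simply $(-1)^{\dim\MF+1}\sign(\sigma)$; there is no Milnor number here and no sign cancellation is required. When $\MF^\sigma\neq\emptyset$ the contribution is $\sign(\sigma)(-1)^{\dim\MF-\dim\MF^\sigma}\mu(h|X^\sigma)$, and the parity identity you isolated at the end is precisely what is needed: since $h$ and $h|X^\sigma$ are cut out by the same number of equations one has $\dim\MF-\dim\MF^\sigma=k-\sum_i\alpha_i$, while $\sign(\sigma)=(-1)^{k-\sum_i\alpha_i}$, so the product is $+1$ for every such $\sigma$. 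Thus the Milnor numbers enter with uniform $+$ sign in the \emph{non-empty} case, and the bare $\sign(\sigma)$ survives in the \emph{empty} case --- the opposite of what your ``main obstacle'' paragraph claims. Once you swap the roles, your argument is complete and coincides with the paper's; there is no telescoping of $\mu$-terms, and the emptiness of $\MF^\sigma$ is not required to force any parity relation.
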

\begin{proof}
The proof is similar to those in \cite{gomo, hk, wallsym}. Here we give a relative version.

Let $[M]$ denote an element of the ring of all $\Q $-linear representations of $S_k$. For a pair $(A,B)$ of $S_k$-invariant complex analytic spaces one has an equivariant Euler characteristic $\chi _{S_k}(A,B)$ given by 
\[
\chi _{S_k} (A,B)= \sum_{q} (-1)^q [H^q (X,A;\Q )] .
\]
(The following holds in more generality than for complex analytic spaces -- the main property we are using is that complex analytic spaces can be triangulated, see \cite{wallsym}.)

For all $\sigma \in S_k$, $\chi _{S_k}(A,B)(\sigma )$ is equal to the usual topological Euler characteristic of the fixed point pair $(A^\sigma , B^\sigma )$, see \cite{wallsym}.
By the general theory of representations we can show that 
\[
\chi ^{alt} (A,B) = \frac{1}{k!} \sum_{\sigma \in S_k} \sign(\sigma ) \chi (A^\sigma , B^\sigma ).
\]
We can now apply this theory to the pair $(A,B)=(X,\MF )$.

Since $\MF $ is the Milnor fibre of an isolated complete intersection singularity and hence is homotopy equivalent to a wedge of spheres of real dimension $\dim_\C \MF $ and $X$ is contractible, we have 
\begin{eqnarray*}
\chi _{S_k} (X,\MF ) &=& \sum _q (-1)^q[H^q(X,\MF ;\Q ) ] \\
&=& (-1)^{\dim \MF  +1}[ H^{\dim \MF  +1} (X,\MF ;\Q )] .
\end{eqnarray*}
We have
\[
\chi _{S_k} (X,\MF ) (\sigma ) = \chi (X^\sigma ,\MF ^\sigma )
\]
and so 
\[
(-1)^{\dim \MF  +1}[ H^{\dim \MF  +1} (X,\MF ;\Q )](\sigma ) = 
\left\{ 
\begin{array}{ll}
(-1)^{\dim \MF ^\sigma  +1} \mu (h|X^\sigma ), & {\mbox{ for }} \MF ^\sigma \neq \emptyset ,\\
1,  & {\mbox{ for }} \MF ^\sigma = \emptyset .\\
\end{array}
\right.
\]
The latter relative Euler characteristic is $1$ since $X^\sigma $ is contractible to a point.
Thus, 
\[
[ H^{\dim \MF  +1} (X,\MF ;\Q )](\sigma ) = 
\left\{ 
\begin{array}{ll}
(-1)^{\dim \MF  - \dim \MF ^\sigma  } \mu (h|X^\sigma ), & {\mbox{ for }} \MF ^\sigma \neq \emptyset ,\\
(-1)^{\dim \MF  +1},  & {\mbox{ for }} \MF ^\sigma = \emptyset .\\
\end{array}
\right.
\]
Now note that
\begin{eqnarray*}
& &\dim \Alt _k H^{\dim \MF  +1}(X,\MF ;\Q ) \\
&=& \frac{1}{k!} \sum_{\sigma \in S_k} \sign (\sigma )  [ H^{\dim \MF  +1} (X,\MF ;\Q )](\sigma ) \\
&=& \frac{1}{k!} \left( \sum_{\sigma {\text{ s.t.\ }} \MF ^\sigma \neq \emptyset } \sign (\sigma )
(-1)^{\dim \MF  - \dim \MF ^\sigma  } \mu (h|X^\sigma )
+
\sum_{\sigma {\text{ s.t.\ }} \MF ^\sigma \neq \emptyset } \sign (\sigma ) (-1)^{\dim \MF  +1}
\right) .
\end{eqnarray*}
Now, 
\begin{eqnarray*}
\dim \MF  &=& n+k  - \codim (\MF , \C ^{n+k})\\
{\mbox{and }} \dim \MF ^\sigma &=& \dim (\C ^{n+k})^\sigma - \codim  
(\MF ^\sigma , (\C ^{n+k})^\sigma ).
\end{eqnarray*}
It is straightforward to show that the above codimensions are the same (they are defined by the same number of equations) and to show that $\dim (\C ^{n+k})^\sigma = n+ \sum \alpha _i$. Hence, $\dim \MF  - \dim \MF ^\sigma   = k+\sum \alpha _i  $.
We can also calculate that $\sign (\sigma )= (-1)^{k-\sum \alpha _i }$. Thus, 
$\sign (\sigma ) (-1) ^{ \dim \MF  - \dim \MF ^\sigma  } =1$.
\end{proof}

We now generalise Marar's formula from \cite{marwark}.
\begin{theorem}
\label{marar_formula}
Suppose that $f:(\C ^n,\underline{z})\to (\C ^p,0)$, $n<p$, is a corank 1 finitely $\AA $-determined multi-germ and that $\widetilde{f} $ denotes its disentanglement map.

Let $x\in \widetilde{D}^k(f)$ be a singular point
and let $H_x$ be the isotropy group of $x$ under $S_k$; this is isomorphic to $S_j$ for some $j\leq k $ 
(and actually equal to $S_k$ if $f$ is a mono-germ).
 
Then, for $k\leq d(f)$, we define
\[
\mu ^{alt}_k (f)[x]:=
\frac{1}{|H_x|} 
\left( \sum_{\sigma {\text{ s.t.\ }} \widetilde{D}^k(\widetilde{f},\PP _\sigma  ) \neq \emptyset } \mu ( \widetilde{D}^k(f,\PP _\sigma  ),x )
+ 
(-1)^{\dim \widetilde{D}^k(f,\PP _\sigma  ) +1} 
\sum_{\sigma {\text{ s.t.\ }}  \widetilde{D}^k(\widetilde{f},\PP _\sigma )  =   \emptyset } \sign (\sigma )
\right) .
\]
Here, $\mu (X,x)$ denotes the Milnor number at the point $x$ and s.t.\ denotes `such that'.

Then, we have,
\[
\mu ^{alt}_k (f,0) = \sum _{w\in \orbit (x) {\text{ s.t.\ }} \epsilon _k(x)=0} \mu ^{alt}_k (f)[w] ,
\]
where $\orbit (x)$ is the orbit of $x$ considered as a set with no repetitions and recalling that $\epsilon _k:\widetilde{D}^k(f)\to \C ^p$ is given by $\epsilon (x_1,\dots ,x_k)=f(x_1)$.
\end{theorem}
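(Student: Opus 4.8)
The plan is to reduce the whole statement, by the definition of $\mu^{alt}_k$ and Lemma~\ref{E1sparse}, to the identity
\[
\mu^{alt}_k(f,0)=\dim\Alt_k H^{d_k+1}\bigl(D^k(F),\widetilde D^k(\widetilde f);\Q\bigr),\qquad d_k:=nk-p(k-1)=\dim\widetilde D^k(\widetilde f),
\]
valid for $2\le k\le d(f)$, and then to localise the right-hand side at the singular tuples of $\widetilde D^k(f)$. Since $F$ is versal it is stable, so by Theorem~\ref{mps_for_stable} $D^k(F)$ is smooth; as in the proofs of Lemmas~\ref{E1_for_F} and \ref{E1sparse}, its germ along the fibre over $0$ splits, over the $k$-tuples $w$ drawn from $\underline z$, into a disjoint union of contractible neighbourhoods, each of which is a Milnor ball $X_w$ for the isolated complete intersection singularity $(\widetilde D^k(f),w)$ of Corollary~\ref{mps_for_findet}, with $\widetilde D^k(\widetilde f)$ locally the corresponding Milnor fibre $\MF_w$. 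Consequently, as $S_k$-modules,
\[
H^*\bigl(D^k(F),\widetilde D^k(\widetilde f);\Q\bigr)\cong\bigoplus_w H^*(X_w,\MF_w;\Q),
\]
where $w$ runs over the singular tuples (a non-singular tuple contributes a pair of contractible spaces, hence nothing) and $S_k$ permutes the summands exactly as it permutes the $w$.

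Two reductions then finish the argument. First, grouping the singular tuples into $S_k$-orbits and writing $H_w\subseteq S_k$ for the stabiliser of a representative $w$ — a Young subgroup $S_{m_1}\times\cdots\times S_{m_r}$, which is a single symmetric group $S_j$ precisely when only one branch is repeated, in particular for mono-germs, where $H_w=S_k$ — the contribution of that orbit is $\mathrm{Ind}_{H_w}^{S_k}H^*(X_w,\MF_w;\Q)$. By Frobenius reciprocity the $\sign$-isotypic part of an induced module equals the isotypic part of the inducing module for the restriction of $\sign_{S_k}$ to $H_w$, and that restriction is the sign character of the Young subgroup; hence $\dim\Alt_k\mathrm{Ind}_{H_w}^{S_k}V=\dim\Alt_{H_w}V$, and therefore $\mu^{alt}_k(f,0)=\sum_{\text{orbits}}\dim\Alt_{H_w}H^{d_k+1}(X_w,\MF_w;\Q)$. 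Second, I would evaluate each summand with Proposition~\ref{alt_of_pair}, applied to the $H_w$-invariant isolated complete intersection singularity $(X_w,\MF_w)$: the proposition is stated for $H_w\cong S_j$, but its short proof — the equivariant Euler-characteristic formula together with the identity $\sign(\sigma)(-1)^{\dim\MF_w-\dim\MF_w^\sigma}=1$ — applies verbatim to a product of symmetric groups. Feeding in the identifications $X_w^\sigma=\widetilde D^k(f,\PP_\sigma)$ (the germ at $w$), $\MF_w^\sigma=\widetilde D^k(\widetilde f,\PP_\sigma)$ and $\mu(h|X_w^\sigma)=\mu(\widetilde D^k(f,\PP_\sigma),w)$, and using the Corollary of Section~\ref{mps} on $\widetilde D^k(F,\PP)$ to know that these fixed-point germs are again complete intersections of the dimensions it records — so that the sign identity applies and the case $\MF_w^\sigma=\emptyset$ is exactly $\widetilde D^k(\widetilde f,\PP_\sigma)=\emptyset$ — Proposition~\ref{alt_of_pair} gives
\[
\dim\Alt_{H_w}H^{d_k+1}(X_w,\MF_w;\Q)=\mu^{alt}_k(f)[w].
\]
Note that $X_w^\sigma=\emptyset$ for $\sigma\notin H_w$, so the sums defining $\mu^{alt}_k(f)[w]$ effectively run over $H_w$ and the factor $1/|H_w|$ is the correct normalisation. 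Since $\mu^{alt}_k(f)[w]$ depends only on the orbit of $w$, summing over the orbits of singular tuples lying over $0$ yields $\mu^{alt}_k(f,0)=\sum_{w\in\orbit(x)}\mu^{alt}_k(f)[w]$.

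I expect the main obstacle to be precisely this second reduction: one must understand the local geometry of $\widetilde D^k(f)$ and of its $\sigma$-fixed loci $\widetilde D^k(f,\PP_\sigma)$ at each singular tuple well enough to invoke Proposition~\ref{alt_of_pair} cleanly — that the stabiliser permutes blocks of source coordinates, that all the relevant fixed-point germs are complete intersections of the predicted dimensions so that the sign bookkeeping closes up, and that $\MF_w\neq\emptyset$ throughout the range $k\le d(f)$ while $\MF_w^\sigma$ may be empty. The remaining steps — the Euler-characteristic computation and the Frobenius reciprocity argument — are formal.
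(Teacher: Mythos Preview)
Your proposal is correct and follows essentially the same route as the paper: localise $H^*(D^k(F),\widetilde D^k(\widetilde f);\Q)$ at the singular tuples, pass from the $S_k$-alternating part to the $H_w$-alternating part of each local piece, and evaluate the latter by Proposition~\ref{alt_of_pair}. The paper phrases your Frobenius-reciprocity step as the elementary isomorphism $\Alt_k H^i(\orbit(Z_j);\Q)\cong\Alt_{H}H^i(Z_j;\Q)$ for the stabiliser $H$ of a component $Z_j$, which is the same content; your observation that $H_w$ is in general a Young subgroup $S_{m_1}\times\cdots\times S_{m_r}$ rather than a single $S_j$ is a genuine correction to the paper's statement, and your remark that the proof of Proposition~\ref{alt_of_pair} carries over verbatim to such products is exactly what is needed to close this.
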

\begin{proof}
That $\mu ^{alt}_k(f)[x]$ calculates the alternating cohomology with respect to the isotropy group of $x$ follows from the proposition above.

To calculate the $S_k$-alternating cohomology for the whole of $\widetilde{D}^k(\widetilde{f})$ we note that if $Z\subseteq X^k$ is $S_k$-invariant, then $Z=\coprod _j \orbit (Z_j)$ for some connected components $Z_j$, and we have 
\[
\Alt _k H^i(Z;\Q )= \oplus _j \Alt _k H^i(\orbit (Z_j); \Q ) .
\]
If $H$ is a subgroup of $S_k$ such that $\sigma (Z_j)=Z_j$ for all $\sigma \in H$, then it is easy to show that
\[
\Alt _k H^i(\orbit (Z_j); \Q ) \cong \Alt _{H} H^i (Z_j ;\Q ) 
\]
where $\Alt _{H} $ means that the alternation is taken over the elements of $H$ rather than $S_k$.

For our situation where $Z_j$ is a connected component of $D^k(\MF )$ we can see that we can calculate
$\Alt _{H} H^i (Z_j ;\Q ) $ using Proposition \ref{alt_of_pair}.

This completes the argument.
\end{proof}

Let us see how the following well-known results 
can be deduced from the above. See \cite{genim} for more general versions.
\begin{example}
\label{correct_examples}
Suppose that $f:(\C ^2,0)\to (\C ^3,0)$ is a corank 1 finitely $\AA $-determined map-germ. Then,
\begin{enumerate}
\item $\mu ^{alt}_2 (f)= \frac{1}{2} \left( \mu (\widetilde{D}^2(f)) + \mu (\widetilde{D}^2(f)|H) \right)$, where $H$ is the fixed point set of the action of $S_2$ on $\C ^2\times \C^2$;
\item (Cf. \cite{r2r3} Prop.\ 3.7.) Suppose that $\widetilde{D}^3(\widetilde{f})\neq \emptyset $. Then $\mu ^{alt}_3 (f)=\frac{1}{6} \left( \mu (\widetilde{D}^3(f)) + 1 \right)$.
\end{enumerate}
\end{example}
\begin{proof}
(i) Here, since $k=2$, we have only two elements in $S_2=\{ id , \sigma \}$, and hence only two partitions $\PP = (1,1)$ and $\PP = (2)$. For $\widetilde{D}^2(f,(1,1))=\widetilde{D}^2(f)$ we have a curve and $\widetilde{D}^2(f,(2))=\mu (\widetilde{D}^2(f)|H) )$ is an isolated point. In both cases we have $\widetilde{D}^2(\widetilde{f}, \PP)\neq \emptyset $ and so $\mu ^{alt}_2 (f)$ has the form stated. 

(ii) Here we have that  $\PP $ is $(1,1,1)$, $(1,2)$, or $(3)$.
The set $\widetilde{D}^3(\widetilde{f},(1,1,1))$ is a finite set of points, each $S_3$-orbit corresponds to a triple point in the image. The sets $\widetilde{D}^k(\widetilde{f} ,\PP )$ for $\PP \neq (1,1,1)$ are all empty. (If they were not, then $\widetilde{f}$ would not be a stable map.) Hence, 
\[
(-1)^{\dim D^3(f)+1}\sum_{\widetilde{D}^k(\widetilde{f} , \PP )\neq \emptyset } (-1) ^{k-\sum \alpha _i} 
=(-1)^1(-3+2)=1.  
\]
Therefore, $\mu ^{alt}_3(f)$ has the described form.
\end{proof}

\begin{example}[Example 2.11 of \cite{hk}]
Suppose that $f:(\C ^3,0)\to (\C ^4,0)$ is a corank 1 finitely $\AA $-determined map-germ and that $\widetilde{D}^3(\widetilde{f})\neq \emptyset $. In this case $\widetilde{D}^3(\widetilde{f})$ is a curve, (i.e., for the partition associated to the identity in $S_3$), the restriction to the simple transpositions $\widetilde{D}^3(\widetilde{f})| H$, where $H$ is the fixed point set of a simple transposition, is finite set of points. The set $\widetilde{D}^3(\widetilde{f}, \PP _\sigma  )$ is empty for the two remaining elements of $S_3$. 

Therefore, we have
\[
\mu ^{alt}_3 (f)=\frac{1}{6} \left( \mu (\widetilde{D}^3(f)) + 3\mu (\widetilde{D}^3(f)| H)  + 2 \right) .
\]
\end{example}

\begin{remark}
Similar formulas are used in Section 2 of \cite{diswe}.  
\end{remark}

\section{Upper semi-continuity of the image Milnor number for corank 1} 
\label{uppersemi}
Let $f:(\C ^n,\underline{z})\to (\C ^p,0)$ be a finitely $\AA $-determined corank $1$ multi-germ, $n<p$, and let
$F:(\C ^n \times \C ,\underline{z}\times  0  )\to (\C ^p\times \C,0\times 0)$, be a one-parameter unfolding of the form where for
a representative of $F$, (also denoted by $F$), 
$F(x,t)=(f_t(x),t)$ with $f_0=f$ and $f_t(x)=0 $ for all $x\in\underline{z}$. Such an unfolding is called {\em{origin-preserving}}.

In the following sections we will be concerned with the maps $f_t$ such that $t$ lies in a contractible open neighbourhood of $0$ in $\{0\} \times \C \subset \C ^p \times \C $. We denote this neighbourhood by $T$.

It should be noted in the following that $d(f_t)$ need not be constant for $t\in T$. (Recall that $d(f)$ is the largest $k$ such that $\widetilde{D}^k(\widetilde{f})\neq \emptyset $.) For example, even for mono-germs, we can have a change. Let $f_t:(\C ,0) \to (\C ^2,0)$ be given by $f_t(x)=(x^2,x^3+tx)$. Then $f_0$ is a cusp and hence $\widetilde{D}^2(\widetilde{f_0})\neq \emptyset $, but $f_t$ is an immersion at the origin in $\C $ for $0\neq t\in T$, and hence $\widetilde{D}^2(\widetilde{f_t})= \emptyset $.

Similarly, if we define (as usual) $s(f_t)$ to be the number of branches of $f_t$, i.e., the cardinality of $f_t^{-1}(0)$, then $s(f_t)$ may not be constant in the family. In other words, there may be a branch of $f_t$ that fuses with another in $f_0$ as $t$ tends to zero. (One can use the cusp example given in preceding paragraph with diffeomorphisms applied to source and target so that for $t\neq 0$ the double point in the image remains at the origin and one of its preimages is the origin in the source.)

It is well known that the Milnor number of an isolated complete intersection is upper semi-continuous; that is, in a family of such singularities, denoted $h_t$,  we have $\mu (h_t)\leq \mu (h_0)$ for all $t$ in some neighbourhood of $0$
(see, for example, page 126 of \cite{looi}). We now show that, under very general conditions, the alternating Milnor numbers have the same property.

\begin{lemma}
\label{mualtlemma}
Suppose that $\widetilde{D}^k(\widetilde{f}_t)\neq \emptyset $ for all $t\in T$. Then $\mu ^{alt}_k$ is upper semi-continuous. That is, $\mu ^{alt}_k (f_t) \leq \mu ^{alt}_k (f_0)$ for all $t$ in some neighbourhood of $0$. 
\end{lemma}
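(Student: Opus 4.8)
The plan is to reduce the statement to the known upper semi-continuity of Milnor numbers of isolated complete intersection singularities, applied component-by-component through the formula in Theorem~\ref{marar_formula}. Recall that, by that theorem, $\mu^{alt}_k(f_t,0)$ is a sum over orbits of singular points $x \in \widetilde{D}^k(f_t)$ lying over $0$, and each summand $\mu^{alt}_k(f_t)[x]$ is a rational linear combination, with fixed coefficients $\tfrac{1}{|H_x|}\sign(\sigma)$ or $\tfrac{1}{|H_x|}$, of Milnor numbers $\mu(\widetilde{D}^k(f_t,\PP_\sigma),x)$ together with purely combinatorial $\pm 1$ contributions from those $\sigma$ with $\widetilde{D}^k(\widetilde{f}_t,\PP_\sigma)=\emptyset$. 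So the whole expression is built from the genuine analytic invariants $\mu(\widetilde{D}^k(f_t,\PP_\sigma),x)$ plus terms governed only by which strata are nonempty.

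First I would fix the total space picture: the unfolding $F(x,t)=(f_t(x),t)$ is origin-preserving, so $\widetilde{D}^k(F)$ is an isolated complete intersection singularity (Corollary~\ref{mps_for_findet} applied to $F$, or directly) and, for each partition $\PP$, $\widetilde{D}^k(F,\PP)$ is also an ICIS because it is cut out by the right number of linear equations inside $\widetilde{D}^k(F)$; the map to the $t$-axis realizes $\{\widetilde{D}^k(f_t,\PP)\}_{t\in T}$ as a flat family of ICIS (or empty). The classical upper semi-continuity of the Milnor number in a family of ICIS (cited from p.~126 of \cite{looi}) then gives, for $t$ near $0$,
\[
\sum_{x \text{ over } 0} \mu\bigl(\widetilde{D}^k(f_t,\PP_\sigma),x\bigr) \;\leq\; \sum_{x \text{ over } 0} \mu\bigl(\widetilde{D}^k(f_0,\PP_\sigma),x\bigr),
\]
summing Milnor numbers over the points of the special fibre's singular locus that the nearby singularities limit to. This is exactly the passage from $\mu^{alt}_k(f_t)[x]$ summed over orbits to $\mu^{alt}_k(f_t,0)$.

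The remaining, and genuinely delicate, point is the bookkeeping of the combinatorial terms: as $t\to 0$ a stratum $\widetilde{D}^k(\widetilde{f}_t,\PP_\sigma)$ that is nonempty for $t\neq 0$ might become empty for $t=0$, or vice versa, and the sign conventions in Theorem~\ref{marar_formula} mean a stratum switching between the ``$\mu$'' sum and the ``$\sign(\sigma)$'' sum could a priori change the count in the wrong direction. Here the hypothesis $\widetilde{D}^k(\widetilde{f}_t)\neq\emptyset$ for \emph{all} $t\in T$ is used: it pins down $d(f_t)=k$ throughout (so the $\mu^{alt}_{d+1}$ boundary case does not intrude), and more importantly one checks that a stratum with $\widetilde{D}^k(\widetilde{f}_t,\PP_\sigma)=\emptyset$ for generic $t$ remains empty for $t=0$ by semicontinuity of fibre dimension (the stratum is a flat family and emptiness of the generic — hence of a Milnor fibre — forces the special fibre to be at most a point, contributing the same combinatorial $\pm1$), while a stratum empty only at $t=0$ would force the \emph{disentanglement} stratum empty for nearby $t$ too, contradicting nothing but simply meaning its $\mu$ is $0$ at $t=0$ and it never enters the generic count. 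The upshot is that the combinatorial part is literally constant on $T$, so only the genuine Milnor numbers vary, and they vary semicontinuously; adding up, $\mu^{alt}_k(f_t,0)\leq\mu^{alt}_k(f_0,0)$.

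The main obstacle, then, is not the analytic input but making the stratum-by-stratum comparison rigorous: matching up the singular points of $\widetilde{D}^k(f_t,\PP_\sigma)$ over $0$ with those they specialize to in $\widetilde{D}^k(f_0,\PP_\sigma)$ (a conservation-of-Milnor-number argument for the ICIS family over $T$), and verifying that the set of partitions contributing a nonzero $\mu$ and the set contributing a $\sign(\sigma)$ term are unchanged as $t\to 0$. Once that combinatorial stability is established, the inequality is immediate.
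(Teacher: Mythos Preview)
Your proposal is correct and follows essentially the same route as the paper: split $\mu^{alt}_k$ via Theorem~\ref{marar_formula} into Milnor-number terms (upper semi-continuous) and a combinatorial sum over partitions with $\widetilde{D}^k(\widetilde{f}_t,\PP_\sigma)=\emptyset$ (constant in $t$), then add. The paper's justification of the combinatorial constancy is more direct than yours: since both $\widetilde{f}_0$ and $\widetilde{f}_t$ are stable, Theorem~\ref{mps_for_stable} and its corollary give that $\widetilde{D}^k(\widetilde{f}_t,\PP)$ is nonempty iff the dimension formula $nk-p(k-1)+\sum\alpha_i-k\geq 0$, a purely numerical condition independent of $t$ --- so there is no need for your flat-family/fibre-dimension argument, and your side remark that the hypothesis ``pins down $d(f_t)=k$'' (it only gives $d(f_t)\geq k$, which is all that is needed) is harmless but inaccurate.
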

\begin{proof}
By assumption $\widetilde{D}^k(\widetilde{f}_0)$ and $\widetilde{D}^k(\widetilde{f}_t)$ (for $t\neq 0$) are both non-empty. 
Then, simply by considerations of dimension arising from the fact that disentanglement maps are stable, (see Theorem~\ref{mps_for_stable}), we have $\widetilde{D}^k(\widetilde{f}_0, \PP )\neq \emptyset $ if and only if $\widetilde{D}^k(\widetilde{f}_t, \PP )\neq \emptyset $ (where $\PP $ is any partition). Thus, the second summation in Theorem~\ref{marar_formula} is constant in the family. 
The Milnor number terms in Theorem~\ref{marar_formula} are upper semi-continuous and so the first summation in Theorem~\ref{marar_formula} is also upper semi-continuous.

Therefore, $\mu ^{alt}_k$ is upper semi-continuous as it is the sum of upper semi-continuous invariants and a constant.
\end{proof}

\begin{remark}
There is an obstruction to $\mu _I$ being upper semi-continuous because currently it is not known whether the summand $\mu ^{alt}_{d(f_t)+1}$  is upper semi-continuous or not. 
Hence, it would be interesting to clarify the behaviour of $\mu ^{alt}_{d(f_t)+1}$ in general. Fortunately, for a map $f$ with $s(f)\leq d(f)$ we have $\mu ^{alt}_{d(f)+1}=0$ (see Remark~\ref{topmufiszero}). This allows us to prove the following theorem.
\end{remark}

\begin{theorem}
\label{mualt_upper}
Suppose that $s(f_t)\leq d(f_t)$ for all $t\in T$. Then, the Image Milnor number $\mu _I$ is upper semi-continuous.
\end{theorem}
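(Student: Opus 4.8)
The plan is to reduce the upper semi-continuity of $\mu_I$ to that of the individual alternating Milnor numbers $\mu^{alt}_k$, to invoke Lemma~\ref{mualtlemma} for the indices $k$ where it applies, and to show that the remaining indices contribute nothing near $0$.

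First I would use the hypothesis exactly as anticipated in the remark preceding the theorem. Since $s(f_t)\le d(f_t)$ for all $t\in T$, Remark~\ref{topmufiszero} gives $\mu^{alt}_{d(f_t)+1}(f_t)=0$, and $\mu^{alt}_k(f_t)=0$ for every $k>d(f_t)$ by definition; moreover $\mu^{alt}_1(f_t)=0$ because the source is contractible, so the only possibly non-zero terms have $2\le k\le d(f_t)$. As the disentanglement map is stable, $\dim\widetilde D^k(\widetilde f_t)=nk-p(k-1)$ by Theorem~\ref{mps_for_stable}, which is negative as soon as $k>p/(p-n)$; hence, with $N=\lfloor p/(p-n)\rfloor$, we have $\mu_I(f_t)=\sum_{k=2}^{N}\mu^{alt}_k(f_t)$, a sum of a fixed finite number of terms. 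Since a finite sum of upper semi-continuous functions is upper semi-continuous, it is enough to show that $t\mapsto\mu^{alt}_k(f_t)$ is upper semi-continuous at $0$ for each fixed $k$ with $2\le k\le N$.

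Next I would fix such a $k$ and put $T_k=\{t\in T\mid\widetilde D^k(\widetilde f_t)\neq\emptyset\}=\{t\in T\mid d(f_t)\ge k\}$. I claim that $T_k$ is a closed, analytically constructible subset of $T$: whether the germ $\widetilde D^k(f_t)$ at the origin is empty is governed by whether the ideal generated by the Vandermonde functions $V^k_r\big((f_t)_s\big)$ is proper, and since those generators depend analytically on $t$, this is a closed analytic condition on $t$; equivalently, a $k$-fold point of a stabilisation of $f_t$ cannot appear out of nothing as $t\to 0$, because the $k$-th multiple point space of a versal unfolding of $f_0$ is a fixed closed analytic set, whose structure is controlled by Theorem~\ref{mps_for_stable} and Corollary~\ref{mps_for_findet}. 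In particular $d(f_t)\le d(f_0)$ for $t$ near $0$. Because $T$ is one-dimensional over $\C$, after shrinking $T$ we may assume that either $T_k=T$ or $T_k\subseteq\{0\}$.

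The two cases are then immediate. If $T_k=T$, then $\widetilde D^k(\widetilde f_t)\neq\emptyset$ throughout $T$, so Lemma~\ref{mualtlemma} gives $\mu^{alt}_k(f_t)\le\mu^{alt}_k(f_0)$ for $t$ near $0$. If $T_k\subseteq\{0\}$, then for $t\in T\setminus\{0\}$ we have $\widetilde D^k(\widetilde f_t)=\emptyset$, hence $k>d(f_t)$, and since $s(f_t)\le d(f_t)$ the exceptional term in the definition of $\mu^{alt}_k$ vanishes, so $\mu^{alt}_k(f_t)=0\le\mu^{alt}_k(f_0)$ (every alternating Milnor number being non-negative); the inequality at $t=0$ is trivial. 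In both cases $\mu^{alt}_k$ is upper semi-continuous at $0$, and summing over $k$ completes the proof. The main obstacle is precisely the dichotomy for $T_k$ — equivalently the inequality $d(f_t)\le d(f_0)$ near $0$, which says that no new $k$-fold point materialises in the disentanglement under the degeneration $t\to 0$ — and establishing it is where one has to combine the analytic dependence of the family on $t$ with the closedness of the multiple point spaces of a versal unfolding.
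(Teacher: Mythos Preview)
Your argument is correct and mirrors the paper's proof: both use the hypothesis to set $\mu^{alt}_{d(f_t)+1}(f_t)=0$, reduce $\mu_I$ to a finite sum of $\mu^{alt}_k$'s, invoke Lemma~\ref{mualtlemma} when $\widetilde{D}^k(\widetilde{f}_t)\neq\emptyset$ throughout, and observe that the remaining case is trivial. The one place the paper is more explicit is the key step you flag (that $T_k$ is closed, i.e.\ $\widetilde{D}^k(\widetilde{f}_t)\neq\emptyset$ for $t\neq 0$ forces $\widetilde{D}^k(\widetilde{f}_0)\neq\emptyset$): the paper induces the whole family from a versal unfolding $H$ of $f_0$, takes closures inside the smooth manifold $D^k(H\times\id_\C)$ to get $\widetilde{D}^k(f_0)\neq\emptyset$, and then uses that $\widetilde{D}^k(f_0)$ is a complete intersection (Corollary~\ref{mps_for_findet}, valid precisely because $k\le N$) to conclude its Milnor fibre $\widetilde{D}^k(\widetilde{f}_0)$ is non-empty --- a passage your sketch via the Vandermonde ideal and the versal unfolding points at but does not make fully explicit.
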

\begin{proof}
As already remarked $s(f_t)\leq d(f_t)$ for all $t\in T$ implies that $\mu ^{alt}_{d(f_t)+1}(f_t)=0$ for all $t\in T$.
Therefore, by definition, the invariant $\mu _I(f_t)$ is a sum of numbers determined solely from the multiple point spaces of the disentanglement map.

If $\widetilde{D}^k(\widetilde{f}_t)$ is non-empty for $t\neq 0$, then $\widetilde{D}^k(\widetilde{f}_0)$ is non-empty. To see this, first let $H$ be a versal unfolding of $f_0$. Then $F$, and hence each $f_t$, can be induced from $H\times \id _\C $ where $\id _\C$ is the identity map on $\C $. Furthermore, $D^k(H\times \id _\C)$ is a manifold by Theorem~\ref{mps_for_stable} and each $D^k(f_t)$ is induced from $D^k(H\times \id _\C)$ by taking a non-singular slice.
Now $\widetilde{D}^k(\widetilde{f}_t)\neq \emptyset$ for $t\neq 0$ implies that $\widetilde{D}^k(f_t)\neq \emptyset $ too. By the Marar-Mond description of multiple point spaces in Corollary~\ref{mps_for_findet} this means that
$\widetilde{D}^k(f_t)$ is a complete intersection (possibly zero-dimensional).

By taking the closure in $D^k(H\times \id _\C )$ of $\widetilde{D}^k(f_t)$ for all $t\neq 0$ we see that $\widetilde{D}^k(f_0)\neq \emptyset $. Purely by analysis of dimension and the Marar-Mond description we can see that this implies that $\widetilde{D}^k(f_0)$ is a complete intersection. This implies that $\widetilde{D}^k(\widetilde{f}_0)\neq \emptyset $. (Note that this uses the complete intersection property. Being non-empty is not enough since for $nk-p(k-1)<0$ the multiple point space $\widetilde{D}^k(f_0)$ could be non-empty but 
$\widetilde{D}^k(\widetilde{f}_0)$ could be empty.)

Therefore, if $\widetilde{D}^k(\widetilde{f}_t)$ for all $t\neq 0$ is non-empty, then, by Lemma~\ref{mualtlemma}, $\mu ^{alt}_k$ is upper semi-continuous.

Next, suppose that $\widetilde{D}^k(\widetilde{f}_0)$ is non-empty and $\widetilde{D}^k(\widetilde{f}_t)$ is empty. Then, 
$\mu ^{alt}_k$ is trivially upper semi-continuous.

Hence, the invariant $\mu _I$ is a sum of upper semi-continuous invariants and so is upper semi-continuous.
\end{proof}

An obvious corollary, but one which is worth stating, is the following.
\begin{corollary}
\label{monouppersemi}
Suppose that $f$ is a mono-germ and $s(f_t)$ is constant. Then $\mu _I$ is upper semi-continuous.
\end{corollary}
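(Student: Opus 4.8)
The plan is to deduce Corollary~\ref{monouppersemi} directly from Theorem~\ref{mualt_upper} by checking that its hypothesis, namely $s(f_t)\leq d(f_t)$ for all $t\in T$, is automatically satisfied when $f$ is a mono-germ with $s(f_t)$ constant. First I would observe that since $f=f_0$ is a mono-germ, $s(f_0)=1$, and by hypothesis $s(f_t)=1$ for all $t\in T$. So it remains only to show that $d(f_t)\geq 1$ for all $t\in T$, i.e. that $\widetilde{D}^1(\widetilde{f}_t)\neq\emptyset$ for all $t$. But this is immediate: $\widetilde{D}^1(g)$ is (a representative of) the source of $g$ itself, which is non-empty for any germ. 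Hence $d(f_t)\geq 1 = s(f_t)$, so the hypothesis $s(f_t)\leq d(f_t)$ of Theorem~\ref{mualt_upper} holds throughout $T$.

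With the hypothesis verified, Theorem~\ref{mualt_upper} applies verbatim and yields that $\mu_I$ is upper semi-continuous, completing the proof. I would then add a brief sentence pointing out why the constancy of $s(f_t)$ is genuinely needed rather than free: as the cusp example $f_t(x)=(x^2,x^3+tx)$ discussed just before Lemma~\ref{mualtlemma} shows, one can have $s(f_0)=1$ but, after the diffeomorphisms described there to keep a double point and one of its preimages at the origin, $s(f_t)=2$ while $d(f_t)=1$ for $t\neq 0$, so that $s(f_t)>d(f_t)$ and the hypothesis of Theorem~\ref{mualt_upper} fails. (Even without that modification, the behaviour of $\mu^{alt}_{d(f_t)+1}$ is the only obstruction to upper semi-continuity of $\mu_I$, exactly as flagged in the remark preceding Theorem~\ref{mualt_upper}.)

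There is essentially no obstacle here: the corollary is a one-line specialisation, and the only thing to be careful about is making explicit the trivial fact that the first multiple point space is the source and hence never empty, which is what forces $d(f_t)\geq 1$. The one point deserving a word of care is that ``mono-germ'' refers to $f$ itself; the branches $f_t$ for $t\neq 0$ in an origin-preserving unfolding are still single-source-point germs by definition of $f_t(x)=(f_t(x),t)$ restricted near the relevant point, and the hypothesis $s(f_t)$ constant then pins $s(f_t)=1$, so no subtlety about multi-germ branches arising in the unfolding enters. Thus the proof is simply: verify $s(f_t)=1\leq d(f_t)$ for all $t$, then invoke Theorem~\ref{mualt_upper}.

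\begin{proof}
Since $f$ is a mono-germ, $s(f_0)=1$, and by hypothesis $s(f_t)=1$ for all $t\in T$. Now $\widetilde{D}^1(\widetilde{f}_t)$ is just (a representative of the source of) the stable map $\widetilde{f}_t$ itself, which is never empty; hence $d(f_t)\geq 1=s(f_t)$ for all $t\in T$. Therefore the hypothesis $s(f_t)\leq d(f_t)$ of Theorem~\ref{mualt_upper} is satisfied throughout $T$, and that theorem gives that $\mu _I$ is upper semi-continuous.
\end{proof}

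\begin{remark}
The assumption that $s(f_t)$ is constant cannot be dropped in general. Modifying the cusp example $f_t(x)=(x^2,x^3+tx)$ by diffeomorphisms of source and target as in the discussion preceding Lemma~\ref{mualtlemma}, one obtains an origin-preserving unfolding of a mono-germ with $s(f_t)=2>1=d(f_t)$ for $t\neq 0$, so that Theorem~\ref{mualt_upper} no longer applies and the behaviour of $\mu ^{alt}_{d(f_t)+1}$ becomes relevant.
\end{remark}
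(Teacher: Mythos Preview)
Your proof is correct and follows exactly the same route as the paper's own argument: observe $s(f_t)=1$ for all $t$, note that $d(f_t)\geq 1$ always, and invoke Theorem~\ref{mualt_upper}. Your version is slightly more explicit in justifying $d(f_t)\geq 1$ via $\widetilde{D}^1(\widetilde{f}_t)$ being the (non-empty) source, whereas the paper simply asserts $d(f_t)\geq 1$ without comment; the added remark is a reasonable gloss but not part of the paper's proof.
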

\begin{proof}
We have $s(f_t)=1$ for all $t\in T$. Hence, the corollary follows from the fact that $d(f_t)\geq 1$ for all $t\in T$.
\end{proof}

As mentioned earlier, for a map $f$, $\mu _I(f)$ contains the summand $\mu ^{alt}_{d(f)+1}$ which we do not know whether or not is upper semi-continuous. The previous propositions circumvented this problem by forcing this number to be $0$. We now go in a different direction and force it to be a (non-zero) constant.

\begin{theorem}
\label{mualt_upper2}
Suppose that $s(f_t)$ and $d(f_t)$ are constant for all $t\in T$. Then, 
\begin{enumerate}
\item $\mu ^{alt}_k$ is upper semi-continuous for all $k$,
\item $\mu _I$ is upper semi-continuous.
\end{enumerate}
\end{theorem}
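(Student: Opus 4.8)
The plan is to reduce everything to Lemma~\ref{mualtlemma} and the definition of $\mu^{alt}_{d(f)+1}$, using the fact that both $s(f_t)$ and $d(f_t)$ are now held fixed. First I would dispose of the summands $\mu^{alt}_k$ for $k\le d(f_t)$. Since $d:=d(f_t)$ is constant, $\widetilde{D}^k(\widetilde{f}_t)\neq\emptyset$ for every $t\in T$ and every $k\le d$ (this is exactly what constancy of $d(f_t)$ buys us, because $\widetilde{D}^{k}(\widetilde{f}_t)\supseteq$ a slice of $\widetilde{D}^{d}(\widetilde{f}_t)$ and non-emptiness propagates downward in $k$ for stable disentanglement maps by Theorem~\ref{mps_for_stable}). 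Hence Lemma~\ref{mualtlemma} applies directly and gives $\mu^{alt}_k(f_t)\le\mu^{alt}_k(f_0)$ for all $k\le d$ and all $t$ near $0$. This already settles part (i) for those $k$, and part (i) for $k>d+1$ is vacuous since $\mu^{alt}_k\equiv 0$ there.

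The remaining case is $k=d+1$. Here I would split according to whether $s(f_t)>d$ or $s(f_t)\le d$ — and, crucially, since both $s(f_t)$ and $d(f_t)$ are constant, exactly one of these two situations holds uniformly for all $t\in T$. If $s(f_t)\le d$ for all $t$, then by Remark~\ref{topmufiszero} we have $\mu^{alt}_{d+1}(f_t)=0$ for every $t$, so this summand is constant (identically zero), hence trivially upper semi-continuous. If instead $s(f_t)>d$ for all $t$, then by the definition of $\mu^{alt}_{d+1}$ we have
\[
\mu^{alt}_{d+1}(f_t)=\left|\sum_{l=d+1}^{s(f_t)}(-1)^l\binom{s(f_t)}{l}\right|,
\]
which depends only on the integers $s(f_t)$ and $d$; both being constant in $t$, this number is constant, hence again trivially upper semi-continuous. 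Either way $\mu^{alt}_{d+1}$ is upper semi-continuous.

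For part (ii), $\mu_I(f_t)=\sum_k\mu^{alt}_k(f_t)$ is a finite sum — finite because $\mu^{alt}_k(f_t)=0$ for $k>d+1$ with $d$ fixed — of invariants each shown to be upper semi-continuous in part (i). A finite sum of upper semi-continuous functions is upper semi-continuous, so $\mu_I(f_t)\le\mu_I(f_0)$ for all $t$ in a neighbourhood of $0$, completing the proof.

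I do not expect a genuine obstacle here: the whole point of the hypotheses is to freeze the two combinatorial quantities $s(f_t)$ and $d(f_t)$ that obstruct Lemma~\ref{mualtlemma} and control the anomalous top summand $\mu^{alt}_{d(f)+1}$. The only mildly delicate point is verifying that constancy of $d(f_t)$ really does force $\widetilde{D}^k(\widetilde{f}_t)\neq\emptyset$ for all intermediate $k$ and all $t$ — this is the hypothesis feeding Lemma~\ref{mualtlemma} — but this follows from the dimension formula $\dim\widetilde{D}^k(\widetilde{f}_t)=nk-p(k-1)$ of Theorem~\ref{mps_for_stable} together with the fact that $\widetilde{D}^{k}(\widetilde{f}_t)=\widetilde{D}^2(\varepsilon^{k-1,k-2})$ is built inductively from $\widetilde{D}^{k+1}(\widetilde{f}_t)$ by a projection, so emptiness at level $k$ would force emptiness at all higher levels, contradicting $d(f_t)=d$.
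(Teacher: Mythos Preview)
Your proof is correct and follows essentially the same route as the paper's: for $k\le d$ you invoke Lemma~\ref{mualtlemma} (the paper routes this through the proof of Theorem~\ref{mualt_upper}, which amounts to the same thing), and for $k=d+1$ you observe that $\mu^{alt}_{d+1}(f_t)$ depends only on the constants $s(f_t)$ and $d(f_t)$ and is therefore itself constant. Your explicit justification that $\widetilde{D}^k(\widetilde{f}_t)\neq\emptyset$ for intermediate $k$ (via the projection $\varepsilon^{k+1,k}$) and your case split on whether $s\le d$ or $s>d$ are details the paper leaves implicit, but the argument is the same.
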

\begin{proof}
Part (ii) obviously follows from part (i). For part (i) we know from the proof of Theorem~\ref{mualt_upper} that if $\widetilde{D}^k(\widetilde{f}_0)$ and $\widetilde{D}^k(\widetilde{f}_t)$ are both non-empty, then $\mu ^{alt}_k$ is upper semi-continuous. Hence we know that $\mu ^{alt}_k$ is upper semi-continuous for $k\leq d(f_t)$. But then, as $s(f_t)$ and $d(f_t)$ is constant in the unfolding, by definition $\mu ^{alt}_{d(f_t)+1}(f_t)$ is constant too, and hence $\mu ^{alt}_{d(f_t)+1}$ is upper semi-continuous.
\end{proof}

Consider now what happens when $\mu _I$ is constant in a family.
\begin{theorem}
\label{constancytheorem}
Suppose that $f:(\C ^n,\underline{z})\to (\C ^p,0)$, $n<p$, is a finitely $\AA $-determined multi-germ and $F$ is an origin-preserving unfolding with $s(f_t)\leq d(f_t)$ for all $t\in T$ or both $s(f_t)$ and  $d(t_t)$ are constant for all $t\in T$. 
Then,
\begin{eqnarray*}
& & \mu _I(f_t,0) {\text{ is constant for all }} t\in T, \\
&\iff & \mu _k^{alt} (f_t,0) {\text { is constant for all }}t\in T, {\text{ and all }} k, \\
&\iff & \mu (\widetilde{D}^k(f_t,\PP _\sigma ),w ) {\text { is constant for all }}t\in T, {\text{ all }} k {\text{ and }}
\PP _\sigma , {\text{ and }} w\in \epsilon _k^{-1}(0) .
\end{eqnarray*}  
\end{theorem}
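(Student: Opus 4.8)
The plan is to prove the two equivalences separately: the first condition ($\mu_I(f_t,0)$ constant in $t$) is equivalent to the second ($\mu^{alt}_k(f_t,0)$ constant in $t$ for all $k$), and the second is equivalent to the third. The only use of the two alternative hypotheses on $s(f_t)$ and $d(f_t)$ is to make the discrete data of the family --- namely $s(f_t)$, $d(f_t)$, and, for each $k$, the list of partitions $\PP_\sigma$ with $\widetilde{D}^k(\widetilde{f}_t,\PP_\sigma)\neq\emptyset$ --- independent of $t\in T$, and in particular to force the summand $\mu^{alt}_{d(f_t)+1}$ either to vanish identically (when $s(f_t)\le d(f_t)$, by Remark~\ref{topmufiszero}) or to be constant (when $s(f_t)$ and $d(f_t)$ are both constant). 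The single mechanism I would use repeatedly is the elementary observation that a finite sum of upper semi-continuous real-valued functions which is itself constant has every summand constant (sum the pointwise inequalities and note equality must hold term by term).

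For the first equivalence, ``second $\Rightarrow$ first'' is immediate from $\mu_I(f_t,0)=\sum_k\mu^{alt}_k(f_t,0)$. For ``first $\Rightarrow$ second'', under either hypothesis Theorem~\ref{mualt_upper2} (or the proof of Theorem~\ref{mualt_upper} via Lemma~\ref{mualtlemma}) shows that every $\mu^{alt}_k$ is upper semi-continuous, so $\mu^{alt}_k(f_t,0)\le\mu^{alt}_k(f_0,0)$ for $t$ near $0$ and all $k$; since $\mu_I(f_t,0)$ is the corresponding finite sum and is constant, the mechanism above forces each $\mu^{alt}_k(f_t,0)$ to be constant.

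For the second equivalence I would use the formula of Theorem~\ref{marar_formula}, which writes $\mu^{alt}_k(f_t,0)$ as a sum of local contributions $\mu^{alt}_k(f_t)[w]$, one for each orbit of singular points $w$ of $\widetilde{D}^k(f_t)$ lying over $0\in\C^p$, each contribution being $\frac{1}{|H_w|}$ times a sum of Milnor numbers $\mu(\widetilde{D}^k(f_t,\PP_\sigma),w)$ over the partitions with $\widetilde{D}^k(\widetilde{f}_t,\PP_\sigma)\neq\emptyset$, plus a sign term $(-1)^{\dim\widetilde{D}^k(f_t,\PP_\sigma)+1}\sum\sign(\sigma)$ over the remaining partitions. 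As in the proofs of Lemma~\ref{mualtlemma} and Theorem~\ref{mualt_upper}, the sign term is constant in $t$: the nonempty strata $\widetilde{D}^k(\widetilde{f}_t,\PP_\sigma)$ are pinned down by dimension, and because each $D^k(f_t)$ is a non-singular slice of the fixed manifold $D^k(H\times\id_\C)$ attached to a versal unfolding $H$ of $f_0$, the orbits over $0$, their isotropy groups $H_w$, and the nonemptiness of each $\widetilde{D}^k(f_t,\PP_\sigma)$ do not vary with $t$. Hence each $\mu^{alt}_k(f_t)[w]$ is a fixed constant plus a fixed non-negative-coefficient combination of the upper semi-continuous functions $\mu(\widetilde{D}^k(f_t,\PP_\sigma),w)$. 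Then ``third $\Rightarrow$ second'' is immediate, and for ``second $\Rightarrow$ third'' one applies the mechanism twice: constancy of $\mu^{alt}_k(f_t,0)$ forces each $\mu^{alt}_k(f_t)[w]$ constant, and then constancy of $\mu^{alt}_k(f_t)[w]$ forces each $\mu(\widetilde{D}^k(f_t,\PP_\sigma),w)$ constant.

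The step I expect to be the main obstacle is the bookkeeping behind the last paragraph: checking carefully, under each of the two hypotheses, that the decompositions of $\mu^{alt}_k(f_t,0)$ coming from Theorem~\ref{marar_formula} for different values of $t$ are genuinely comparable summand by summand --- that no orbit over $0$ and no nonempty partition-stratum appears or disappears as $t\to0$ --- together with the separate treatment of the index $k=d(f_t)+1$, which is not governed by the Milnor-number formula and is precisely where one or the other of the two alternative hypotheses has to be invoked.
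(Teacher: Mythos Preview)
Your proposal is correct and follows essentially the same approach as the paper's own proof: both use the elementary principle that a constant finite sum of upper semi-continuous non-negative invariants forces each summand to be constant, applying it first (via Theorems~\ref{mualt_upper} and~\ref{mualt_upper2}) to pass from $\mu_I$ to the $\mu^{alt}_k$, and then (via Theorem~\ref{marar_formula}) to pass from $\mu^{alt}_k$ to the individual Milnor numbers $\mu(\widetilde{D}^k(f_t,\PP_\sigma),w)$. Your write-up is in fact more careful than the paper's about the bookkeeping (constancy of the sign term, stability of orbits and isotropy groups, and the separate handling of $k=d(f_t)+1$), but there is no genuine difference in strategy.
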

\begin{proof}
Here we use the idea that if a (positive) sum of upper semi-continuous invariants is constant in a family, then each summand is constant in the family.

If $\mu _I(f_t)$ is constant along $T$, then by Theorem~\ref{mualt_upper} or Theorem~\ref{mualt_upper2} as the case may be, we have
$\mu _k^{alt} (f_t)$ constant along $T$ for all $k$. The converse follows from the definition.

If $\mu _k^{alt}(f_t)$ is constant along $T$, then by Theorem~\ref{marar_formula} the $\mu (\widetilde{D}^k(f_t),\PP _\sigma ) $ are constant too as these latter invariants are upper semi-continuous. 
The converse statement also follows obviously from Theorem~\ref{marar_formula}.
\end{proof}

\begin{corollary}
Suppose that $f:(\C ^n,0)\to (\C ^p,0)$, $n<p$, is a finitely $\AA $-determined mono-germ and $F$ is an origin-preserving unfolding
with $s(f_t)$ constant for all $t\in T$. 
Then,
\begin{eqnarray*}
& & \mu _I(f_t,0) {\text { is constant for all }} t\in T, \\
&\iff & \mu _k^{alt} (f_t,0) {\text { is constant for all }}t\in T, {\text{ and all }} k, \\
&\iff & \mu (\widetilde{D}^k(f_t,\PP _\sigma ),w ) {\text { is constant for all }}t\in T, {\text{ all }} k {\text{ and }}
\PP _\sigma , {\text{ and }} w\in \epsilon _k^{-1}(0) .
\end{eqnarray*}  
\end{corollary}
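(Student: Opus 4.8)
The plan is to obtain this as the mono-germ specialisation of Theorem~\ref{constancytheorem}, in exactly the way that Corollary~\ref{monouppersemi} was obtained from Theorem~\ref{mualt_upper}. The only thing to check is that one of the two alternative hypotheses of Theorem~\ref{constancytheorem} is automatically satisfied in the present situation, namely that $s(f_t)\leq d(f_t)$ for all $t\in T$.

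First I would observe that since $f$ is a mono-germ, $s(f_0)=|f_0^{-1}(0)|=1$, and the hypothesis that $s(f_t)$ is constant for $t\in T$ therefore forces $s(f_t)=1$ for all $t\in T$. (This hypothesis is not vacuous: as remarked in Section~\ref{uppersemi}, in an origin-preserving unfolding of a mono-germ extra branches through $0$ may appear for $t\neq 0$.) Next I would recall that $\widetilde{D}^1(\widetilde{f_t})$ is, by convention, a representative of the source of the disentanglement map $\widetilde{f_t}$, which is a non-empty (contractible open) set; hence $d(f_t)=\sup\{k\mid \widetilde{D}^k(\widetilde{f_t})\neq\emptyset\}\geq 1$ for every $t\in T$. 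Combining these two facts gives $s(f_t)=1\leq d(f_t)$ for all $t\in T$, so the first alternative hypothesis of Theorem~\ref{constancytheorem} holds.

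Applying Theorem~\ref{constancytheorem} then yields the stated three-way chain of equivalences verbatim, with no further argument required. There is no genuine obstacle in the proof; the only step deserving a line of justification is the inequality $d(f_t)\geq 1$, which is immediate because a disentanglement map always has non-empty source. It is worth recording the corollary separately because in the mono-germ case it removes the need to assume $d(f_t)$ constant, requiring only the mild and natural hypothesis that the number of preimages of $0$ does not jump.
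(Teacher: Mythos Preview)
Your proposal is correct and follows exactly the same route as the paper's own proof: since $f$ is a mono-germ and $s(f_t)$ is constant, $s(f_t)=1\leq d(f_t)$ for all $t$, so Theorem~\ref{constancytheorem} applies directly. The paper's proof is the one-line version of what you wrote.
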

\begin{proof}
As $f$ is a mono-germ $s(f_t)=1\leq d(f_t)$ for all $t$.
\end{proof}

%
%
%
\section{$\mu _I$-constant and excellent unfoldings} 
\label{unfoldings}
We now discuss excellent unfoldings and indicate their significance -- that is, make precise what the definition is intended to achieve. For an unfolding of a mapping we would like members of the family to be equivalent in some sense. This is the study of equisingularity. 
One example is Whitney equisingularity. Here we Whitney stratify the map so that we can apply Thom's Isotopy Lemma and hence deduce that the members of the family are topologically equivalent. This application can only be achieved if the parameter axes (in source and target) of the unfolding are the only one-dimensional strata. 
The conditions for an unfolding to be excellent imply that the parameter axes are the only one-dimensional stratum.

Let us consider stratifications of unfoldings of finitely $\AA $-determined map-germs.
In order to do this we first define stratification by stable type.
A good reference for this is Section 2.5 of \cite{dupwall}.

Let $G:(\C ^n,\underline{z} )\to (\C ^p,0)$ be a stable map with $n<p$. 
There exist open sets $U\subseteq \C ^n $ and $W \subseteq \C ^p$ such that 
$G^{-1}(W) = U $ and $G:U\to W$ is a representative of $G$.
We can partition $G(U)$ by stable type. That is, $y_1$ and $y_2$ in $\C ^p$
have the same stable type if $G_1:(\C ^n , G_1^{-1}(y_1) ) \to (\C ^p,y_1)$ and
$G_2:(\C ^n , G_2^{-1}(y_2) ) \to (\C ^p,y_2)$ are $\AA $-equivalent. These sets are
complex analytic manifolds.
We can take strata in $U\subseteq \C ^n$ by taking the partition
$G^{-1}(S) $ where $S$ is a stratum in the image.

It is possible to show (\cite{polar} Lemma 7.2 and \cite{dupwall} Section 2.5) that if $G$ is in the nice dimensions or a corank 1 map-germ, then this stratification of source and target by stable type is Whitney regular and any Whitney stratification of $G$ is a refinement of this, i.e., this stratification is canonical.

Now, for the stratification of the unfolding of a finitely $\AA $-determined multi-germ we can stratify the stable parts by stable type. Since the original map was finitely $\AA $-determined this means that in a neighbourhood of the origin of $\C ^p \times \C $ only a collection of curves remains to be stratified after stratification by stable type. For an excellent unfolding (defined later) we would like the parameter axes in $\C ^n \times \C $ and $\C ^p \times \C $ to be the only one-dimensional strata.

Ignoring the parameter axes we can get one-dimensional strata occurring in the stratification in the following ways, see \cite{polar}.
\begin{enumerate}
\item The set $f_t^{-1}(\{ 0\} )$ contains more than $\underline{z} $. That is, there exists an extra branch or branches in $f_t$ passing through $0\in \C ^p $ which `fuse' with other branches at $t=0$.
\item There is a curve of unstable points in $\C ^p\times \C $, other than $\{ 0\} \times \C $, which passes through $0$.
\item There exists a curve of points arising from $f_t$ having a stable type with a zero-dimensional stratum.
\end{enumerate}
Obviously, we can control (i) by requiring that $s(f_t)$ is constant.
To investigate (ii) we make the following definition.
\begin{definition}
The {\em{instability locus}} of a map $g$ is the germ of the set of points $y\in \C ^p$ such that
$g:(\C ^n,g^{-1}(y))\to (\C ^p,y)$ is not stable.
\end{definition}

For (iii) we make the following definition.
\begin{definition}(See \cite{polar}.)
A stable type is called {\em{$0$-stable}} if the stratification by stable type has a $0$-dimensional stratum.
\end{definition}

\begin{examples}
The Whitney cross-cap $(x,y)\mapsto (x, y^2, xy)$ is $0$-stable. The multi-germ from $(\C ^2 , \{x_1, x_2 ,x_3\} )$ to $(\C ^3,0)$
giving an ordinary triple point is $0$-stable.
\end{examples}
By counting the $0$-stable singularities that appear in a stable perturbation of a map from $\C ^2$ to $\C ^3$ with an isolated instability
Mond was able, in \cite{r2r3}, to produce useful invariants.

Our interest here is that it is easy to see for corank $1$ multi-germs that a $0$-stable type corresponds to $\widetilde{D}^k(f,\PP )$ for some $k$ and $\PP $, where $\widetilde{D}^k(f,\PP )$ is zero-dimensional.
Conversely, if $f$ is stable and $\widetilde{D}^k(f,\PP )$ is zero-dimensional, then this corresponds to a $0$-stable singularity in the target.

\begin{definition}
In an unfolding, if 
there is a sequence of $0$-stables
converging to the origin in $\C ^p \times \C$, then there is a curve of $0$-stables in the image of $F$.
If no such curve exists, then we say the {\em{$0$-stables are constant in the family}}.
\end{definition}

We now come to a crucial definition of this paper -- excellent unfolding -- introduced by Gaffney in the study of equisingularity of mappings in \cite{polar}. Here we restrict to the case of corank $1$ maps with $n<p$ which simplifies the definition and we make the simple generalisation to multi-germs. 
\begin{definition}
\label{exc_def}
Suppose that $f:(\C ^n , \underline{z} ) \to (\C ^p ,0)$, $n<p$, is a corank 1 finitely $\AA $-determined multi-germ and that $F$ is
an origin-preserving one-parameter unfolding such that $F|U \to W $ is proper and finite-to-one.

We call $F$ a {\em{good unfolding}} if there exists a contractible neighbourhood $T$ of $\{0\} \times \C \subset \C ^p \times \C$, such that all the following hold.
\begin{enumerate}
\item $F^{-1}(W)=U$.
\item $F(U  \backslash (\underline{z} \times \C ) )= W\backslash T $, i.e., $s(f_t)$ is constant.
\item The locus of instability is contained in $T$.
\end{enumerate}
We call $F$ an {\em{excellent unfolding}} if in addition we have the following.
\begin{enumerate}
\setcounter{enumi}{3}
\item The $0$-stables are constant along $T$.
\end{enumerate}
\end{definition}

\begin{remark}
The first condition is easiest to achieve as we can just restrict our domain to the preimage of $W$. It is really the last three conditions that need to be checked. Algebraic conditions for an unfolding to be good are given, using work of Damon, in Proposition~2.3 of \cite{polar}.
\end{remark}
Constancy of the image Milnor number in a family is sufficient to imply that the $0$-stables are constant.
\begin{theorem}
\label{0stablesconstant}
Suppose that $f:(\C ^n , \underline{z} ) \to (\C ^p ,0)$, $n<p$, is a corank 1 finitely $\AA $-determined multi-germ and that $F$ is an origin-preserving one-parameter unfolding such that $F|U \to W $ is proper and finite-to-one. Suppose further that
$s(f_t)\leq d(f_t)$ for all $t\in T$ or both $s(f_t)$ and $d(f_t)$ are constant for all $t\in T$.

Then, $\mu _I(f_t)$ is constant for all $t\in T $ implies that the $0$-stables are constant.
\end{theorem}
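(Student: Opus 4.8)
The plan is to reduce the constancy of $0$-stables to the constancy of certain Milnor numbers, which is exactly what Theorem~\ref{constancytheorem} delivers under the stated hypotheses. Recall from the discussion preceding Definition~\ref{exc_def} that for corank $1$ multi-germs a $0$-stable type in the image of $\widetilde{f}_t$ corresponds precisely to a multiple point space $\widetilde{D}^k(\widetilde{f}_t,\PP _\sigma )$ that is zero-dimensional, i.e., to a partition $\PP _\sigma $ with $nk-p(k-1)+\sum _i\alpha _i-k=0$. So the statement ``the $0$-stables are constant along $T$'' is equivalent to saying that no sequence of such zero-dimensional strata in the image of $\widetilde{f}_t$ can converge to the origin of $\C ^p\times \C$ as $t\to 0$.

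The key step is then the following counting argument. By hypothesis we are in one of the two cases where Theorem~\ref{constancytheorem} applies, so $\mu _I(f_t)$ constant along $T$ implies that $\mu (\widetilde{D}^k(f_t,\PP _\sigma ),w)$ is constant for all $k$, all $\PP _\sigma$, and all $w\in \epsilon _k^{-1}(0)$. Fix a $k$ and a partition $\PP _\sigma$ with $nk-p(k-1)+\sum _i\alpha _i-k=0$; for such a partition the space $\widetilde{D}^k(f_t,\PP _\sigma)$ at a point $w\in \epsilon_k^{-1}(0)$ is a zero-dimensional isolated complete intersection singularity (by the Corollary following Corollary~\ref{mps_for_findet}), and its Milnor number $\mu (\widetilde{D}^k(f_t,\PP _\sigma),w)$ counts exactly the number of points into which that germ splits when $f_t$ is stabilised — that is, the number of $0$-stable strata of that combinatorial type in $\widetilde{f}_t$ that collapse to $w$ as $t\to 0$. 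If $\mu (\widetilde{D}^k(f_t,\PP _\sigma),w)$ were strictly positive for some $w$ over the origin, then there would be a genuine collision of $0$-stables at $t=0$, hence a curve of $0$-stables; conversely, if all these Milnor numbers vanish over $\epsilon_k^{-1}(0)$, then no $0$-stable of type $(k,\PP _\sigma)$ hits the origin except possibly a fixed finite set that persists, so no curve forms. Thus constancy of all these Milnor numbers along $T$ — which in particular forces each to equal its value at a generic $t\neq 0$, namely the value that produces exactly the ``constant'' configuration — rules out the appearance of a curve of $0$-stables through the origin, giving condition (iv) of Definition~\ref{exc_def}.

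More precisely, I would argue as follows. Work inside a versal unfolding $H$ of $f_0$ as in the proof of Theorem~\ref{mualt_upper}, so that $D^k(H\times \id _\C)$ is a manifold and each $D^k(f_t)$ is a non-singular slice of it; this gives a flat family $\{\widetilde{D}^k(f_t,\PP _\sigma)\}_{t\in T}$ of germs at points over the origin, with upper semi-continuous Milnor number. Constancy of $\mu (\widetilde{D}^k(f_t,\PP _\sigma),w)$ along $T$ together with the fact that $\widetilde{D}^k(\widetilde{f}_t,\PP _\sigma)$ is the Milnor fibre (a finite set of $\mu$ reduced points when the dimension is zero) means the total number of $0$-stable points of type $(k,\PP _\sigma)$ in a fixed small ball around the origin of $\C^p$, counted for $\widetilde{f}_t$, does not jump as $t\to 0$. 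Summing over the finitely many relevant $(k,\PP _\sigma)$, the total number of $0$-stables of $\widetilde{f}_t$ clustering near the origin is the same for $t=0$ and for $t\neq 0$; hence there is no extra sequence of $0$-stables converging to the origin, i.e.\ the $0$-stables are constant in the family.

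The main obstacle I expect is bookkeeping rather than anything deep: one must be careful that ``$0$-stable in the image of $\widetilde{f}_t$'' is matched bijectively with ``zero-dimensional component of some $\widetilde{D}^k(\widetilde{f}_t,\PP _\sigma)$ lying over a point of the image'', including the subtlety (noted in Remark~\ref{topmufiszero} and in the discussion of the partition spaces) that different $\sigma$ with $\PP _\sigma=\PP _{\sigma'}$ give equivalent spaces, and that orbits must be counted without repetition, exactly as in Theorem~\ref{marar_formula}. One must also handle the boundary case where a $0$-stable type sits at a point $w\in \epsilon_k^{-1}(0)$ that is itself a $k$-tuple of elements of $\underline{z}$: here $\widetilde{D}^k(f_0,\PP _\sigma)$ might be empty or a point by Corollary~\ref{mps_for_findet}, and one should check that under the hypothesis $s(f_t)\le d(f_t)$ or $s(f_t),d(f_t)$ both constant this contributes nothing new — which is precisely why these hypotheses were imposed and why Theorem~\ref{constancytheorem} is available. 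Once that dictionary is set up cleanly, the implication is immediate from Theorem~\ref{constancytheorem}.
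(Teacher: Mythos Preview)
Your proposal is correct and follows essentially the same approach as the paper: invoke Theorem~\ref{constancytheorem} to deduce constancy of all $\mu (\widetilde{D}^k(f_t,\PP _\sigma ),w)$ along the parameter axes, then use the correspondence between $0$-stables and zero-dimensional $\widetilde{D}^k(f,\PP )$ to argue that a curve of $0$-stables outside $T$ would force one of these Milnor numbers to jump at $t=0$. The paper's version is terser and records in passing that the point count of the Milnor fibre is $\mu +1$ rather than $\mu$, but this off-by-one slip in your write-up does not affect the constancy argument.
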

\begin{proof}
Consider the multiple point spaces for $F$ and in particular the points $(x_1, \dots , x_k )$ where 
$x_i =(z _i, t)$ with $z_i \in \underline{z}$ and $t\in \C $. We will get a family 
of singularities along these parameter axes in $D^k(f)$, which we will denote by $T_k$. Note that for multi-germs
these manifolds may have many connected components.
We will consider the constancy of Milnor numbers along these axes.

A $0$-stable corresponds to a $0$-dimensional $\widetilde{D}^k(f,\PP )$ for some $k$ and $\PP $.
If there is a curve of $0$-stables that does not lie in $T$, then this means that the number of
points, i.e., the Milnor number plus one, must jump at $t=0$. This is not possible as $\mu _I(f_t)$ is constant along $T$. This is because if $\mu _I(f,t)$ is constant along $T$, then the corresponding Milnor numbers of $\widetilde{D}^k(f_t, \PP _\sigma )$ will be constant along $T_k$ by Theorem \ref{constancytheorem}.
In particular the Milnor numbers of all the $0$-dimensional singularities will be constant.
\end{proof}

\begin{corollary}
Suppose that $f:(\C ^n , 0 ) \to (\C ^p ,0)$, $n<p$, is a corank 1 finitely $\AA $-determined multi-germ and that $F$ is an origin-preserving one-parameter unfolding such that $F|U \to W $ is proper and finite-to-one and $s(f_t)$ is constant. 

Then, $\mu _I(f_t)$ constant for all $t\in T $ implies that the $0$-stables are constant.
\end{corollary}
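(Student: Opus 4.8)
The plan is simply to reduce the statement to Theorem~\ref{0stablesconstant} by checking that its hypotheses are met. The map $f$ has source $(\C^n,0)$, so it is a mono-germ; combined with the assumption that $s(f_t)$ is constant in the unfolding and that the unfolding is origin-preserving (so $f_0^{-1}(0)=\{0\}$), we get $s(f_t)=1$ for all $t\in T$. This is exactly the route taken in the proof of Corollary~\ref{monouppersemi}.

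Next I would observe that $d(f_t)\geq 1$ for every $t\in T$. Indeed, $\widetilde{D}^1(\widetilde{f}_t)$ is the source of the disentanglement map (the column labelled $U$ in Figure~\ref{E1fig_for_F}), which is non-empty, so the supremum defining $d(f_t)$ is taken over a non-empty set and is at least $1$. Hence $s(f_t)=1\leq d(f_t)$ for all $t\in T$, which is precisely the first of the two alternative hypotheses of Theorem~\ref{0stablesconstant}.

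Since $f$ is corank~1 and finitely $\AA$-determined and $F$ is an origin-preserving one-parameter unfolding with $F|U\to W$ proper and finite-to-one, all the remaining hypotheses of Theorem~\ref{0stablesconstant} hold verbatim. Applying that theorem then gives that $\mu_I(f_t)$ constant for all $t\in T$ implies the $0$-stables are constant, which is the assertion. There is essentially no obstacle here: the only thing to be careful about is the trivial bookkeeping that a mono-germ with constant $s(f_t)$ forces $s(f_t)=1$ and that $d(f_t)$ is never $0$, so that we land in the first case of the dichotomy in Theorem~\ref{0stablesconstant} rather than needing $d(f_t)$ to be constant.
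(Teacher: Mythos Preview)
Your proposal is correct and follows exactly the paper's own argument: the paper's proof is the single line ``Again, $s(f_t)=1\leq d(f_t)$ and so the corollary follows from the theorem,'' and you have simply spelled out the bookkeeping behind that inequality before invoking Theorem~\ref{0stablesconstant}.
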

\begin{proof}
Again, $s(f_t)=1\leq d(f_t)$ and so the corollary follows from the theorem.
\end{proof}
The next corollary also follows simply from the theorem but is worth remarking.
\begin{corollary}
\label{muiconst+good}
For $F$ as in the theorem, if $F$ is good and $\mu _I(f_t)$ is constant, then  $F$ is excellent.
\end{corollary}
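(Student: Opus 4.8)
The plan is to observe that this is an immediate consequence of Theorem~\ref{0stablesconstant} together with the definition of excellent unfolding. Recall from Definition~\ref{exc_def} that an origin-preserving one-parameter unfolding $F$ with $F|U\to W$ proper and finite-to-one is \emph{excellent} precisely when it is \emph{good} --- i.e.\ conditions (i), (ii), (iii) of that definition hold for some contractible neighbourhood $T$ of $\{0\}\times\C\subset\C^p\times\C$ --- and, in addition, condition (iv) holds: the $0$-stables are constant along $T$.

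First I would note that since $F$ is assumed good, conditions (i)--(iii) are already in hand, so the only thing left to check in order to conclude that $F$ is excellent is condition (iv). Next I would invoke Theorem~\ref{0stablesconstant}: its hypotheses are exactly those assumed here (namely $f$ is a corank~1 finitely $\AA$-determined multi-germ with $n<p$, $F$ is an origin-preserving one-parameter unfolding with $F|U\to W$ proper and finite-to-one, and either $s(f_t)\le d(f_t)$ for all $t\in T$ or both $s(f_t)$ and $d(f_t)$ are constant for all $t\in T$), supplemented by the extra assumption that $\mu_I(f_t)$ is constant for all $t\in T$. That theorem then delivers precisely condition (iv). Combining it with (i)--(iii) and applying Definition~\ref{exc_def} gives that $F$ is excellent.

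There is essentially no obstacle: all the substantive work is contained in Theorem~\ref{0stablesconstant}, and this corollary merely repackages it in the language of good and excellent unfoldings. The one point I would be mildly careful about is that the neighbourhood $T$ witnessing goodness of $F$ should be taken to be the same $T$ on which the constancy of $\mu_I$, and hence (via Theorem~\ref{0stablesconstant}) of the $0$-stables, is asserted --- but this is automatic, since throughout the relevant statements $T$ denotes the single fixed contractible neighbourhood of $\{0\}\times\C$ attached to the unfolding.
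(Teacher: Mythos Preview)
Your proposal is correct and takes essentially the same approach as the paper, which gives no explicit proof and simply remarks that the corollary ``follows simply from the theorem''. Your unpacking of this --- good gives conditions (i)--(iii), Theorem~\ref{0stablesconstant} supplies condition (iv) from $\mu_I$ constant, and Definition~\ref{exc_def} then yields excellence --- is exactly the intended argument.
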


%
%

The invariant $\mu _I$ is important in the study of equisingularity of mappings. For example, see \cite{diswe} and \cite{newequi}. The above corollaries are of great interest because they allow us to relate the excellence of an unfolding to $\mu _I$ as well, thus advancing the conjecture that, in equisingularity theory, this invariant and (others like it) provide sufficient conditions for equisingularity.

\begin{theorem}
\label{main_thm}
Let $f:(\C ^n , \underline{z} ) \to (\C ^p ,0)$, $n<p$, be a corank 1 finitely $\AA $-determined multi-germ and $F$ be an origin-preserving one-parameter unfolding such that $F|U \to W $ is proper and finite-to-one.

Suppose that $f$ has a one-parameter stable unfolding and that $s(f)\leq d(t_t)$ for all $t$ or both $s(f_t)$ and $d(f_t)$ are constant for all $t$.

Then $\mu _I$ constant implies that the instability locus of $F$ is $T$.
\end{theorem}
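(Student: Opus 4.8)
The plan is to use the hypothesis that $f$ admits a one-parameter stable unfolding to identify $F$ up to $\A$-equivalence, and then to read off the instability locus directly; one may assume $f$ is not stable, as otherwise there is nothing to prove. In the setting under consideration a stable unfolding of a map-germ is necessarily $\A$-versal, so the hypothesis says exactly that $f$ has $\A_e$-codimension $1$ and that its $\A$-versal unfolding is a one-parameter unfolding $\mathcal F \colon (\C^n\times\C,\underline z\times 0)\to(\C^p\times\C,0)$, $\mathcal F(x,s)=(\mathcal F_s(x),s)$, with $\mathcal F_0=f$. The bifurcation set of a one-parameter versal unfolding is a proper analytic subset of $\C$, hence equals $\{0\}$, so $\mathcal F_s$ is a stable map — indeed a disentanglement map of $f$ — for every $s\neq 0$; thus $\mu_I(\mathcal F_s)=0$ for $s\neq 0$, whereas $\mu_I(f_0)=\mu_I(f)>0$ since $f$ is not stable and so has non-contractible disentanglement.

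Next I would invoke versality of $\mathcal F$ to write $F$ as a pull-back: there is a holomorphic germ $\phi\colon(\C,0)\to(\C,0)$, together with unfoldings of the identity of source and target, with respect to which $F$ is $\A$-equivalent, as an unfolding of $f$, to $\phi^{*}\mathcal F$. Consequently $f_t$ is $\A$-equivalent to $\mathcal F_{\phi(t)}$ for every $t\in T$, and as $\mu_I$ is an $\A$-invariant of a map-germ, $\mu_I(f_t,0)=\mu_I(\mathcal F_{\phi(t)})$. If $\phi$ were not identically zero then, being holomorphic, $\phi(t)\neq 0$ for all small $t\neq 0$ in $T$, giving $\mu_I(f_t,0)=0$ for such $t$, which contradicts constancy of $\mu_I$ (since $\mu_I(f_0,0)=\mu_I(f)>0$). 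Hence $\phi\equiv 0$, so $F$ is $\A$-equivalent to the trivial unfolding $(x,t)\mapsto(f(x),t)$ and every $f_t$ is $\A$-equivalent to $f$.

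Finally, the instability locus is an $\A$-invariant, so the instability locus of $F$ is the image under a target diffeomorphism — an unfolding of the identity, which preserves the parameter axis because $F$ and the trivial unfolding are origin-preserving — of the instability locus of the trivial unfolding. The latter is $(\text{instability locus of }f)\times\C$, which by Gaffney's geometric criterion for finite determinacy is $\{0\}\times\C$. Thus the instability locus of $F$ is the parameter axis (i.e.\ the set $T$ of the statement), as required. Two remarks: once $\phi\equiv 0$ is known the auxiliary hypotheses on $s(f_t)$ and $d(f_t)$ hold automatically, all $f_t$ being $\A$-equivalent, so they are not strictly needed on this route; and one can alternatively avoid the versality input by feeding constancy of $\mu_I$ into Theorems~\ref{0stablesconstant} and~\ref{constancytheorem}, which is where those hypotheses naturally enter. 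The delicate step is the use of versality in the second paragraph — producing $\phi$ and checking that the member of $\phi^{*}\mathcal F$ over $t$ is genuinely $\A$-equivalent to $\mathcal F_{\phi(t)}$, so that the image Milnor number transports; the remainder is formal.
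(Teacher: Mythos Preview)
Your route via versality is genuinely different from the paper's and is more conceptual, but the argument as written has a real gap. You assert that ``$\mu_I(f)>0$ since $f$ is not stable and so has non-contractible disentanglement.'' This implication --- that a non-stable finitely $\A$-determined corank~$1$ multi-germ necessarily has positive image Milnor number --- is not established anywhere in the paper; indeed the concluding remarks explicitly list ``$\mu_I(f)=0$ if and only if $f$ is stable'' as something it merely ``seems reasonable to prove.'' Without it you cannot force $\phi\equiv 0$, and your deduction that $F$ is $\A$-trivial breaks down.

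The gap is, however, avoidable within your own framework: you do not actually need $\phi\equiv 0$. If $\phi\not\equiv 0$ then for all small $t\neq 0$ the germ $f_t\sim_{\A}\mathcal F_{\phi(t)}$ is stable, so the only instability in $\phi^*\mathcal F$ sits over $t=0$, where it is the isolated instability of $f$; hence the instability locus of $F$ is the single point $(0,0)\subset T$. Combined with the case $\phi\equiv 0$ (instability locus exactly $T$), this already yields the conclusion --- and, interestingly, without ever using constancy of $\mu_I$. By contrast, the paper's proof takes a quite different path: it exploits the one-parameter stable unfolding only to realise each $\widetilde D^k(f_t)$ as a \emph{hypersurface} in $D^k(G)$, and then argues that an extra curve of instabilities would produce, via Corollary~\ref{mps_for_findet}, a curve of singular points in some $\widetilde D^k(f_t)$ off the parameter axis; additivity of the hypersurface Milnor number then forces a jump at $t=0$, contradicting Theorem~\ref{constancytheorem}. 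That argument genuinely consumes the $\mu_I$-constant hypothesis (through Theorem~\ref{constancytheorem}); your repaired argument does not, which is worth noting.
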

\begin{proof}

Since $f$ has a one-parameter stable unfolding, say $G$, we can assume this holds for all nearby $t$. Therefore $\widetilde{D}^k(f)$ is a hypersurface in $D^k(G)$ with a similar result for all $t$. We can therefore assume that  $\widetilde{D}^k(f_t)$ is a family of hypersurfaces.

Now suppose that the instability locus was not $T$ and there existed another curve of unstable points passing through origin. Then, by Theorem \ref{mps_for_findet} there exists a curve of points in some multiple point space that are singular points not equal to a $k$-tuple from $\underline{z}$. 

In this case, since the Milnor number for hypersurface singularities is additive, the Milnor numbers in this family will jump at $t=0$. But as $\mu _I$ is constant we can see from Theorem~\ref{constancytheorem} that this is not possible.
\end{proof}

\begin{corollary}
Let $f:(\C ^n , \underline{z} ) \to (\C ^p ,0)$, $n<p$, be a corank 1 finitely $\AA $-determined multi-germ and $F$ be an origin-preserving one-parameter unfolding such that $F|U \to W $ is proper and finite-to-one.

Suppose that $f$ has a one-parameter stable unfolding and that both $s(f_t)$ and $d(f_t)$ are constant for all $t$.

Then $\mu _I$ constant implies that $F$ is excellent.
\end{corollary}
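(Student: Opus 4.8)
The plan is to assemble this corollary directly from Theorem~\ref{main_thm}, Theorem~\ref{0stablesconstant} and the definition of excellent unfolding (Definition~\ref{exc_def}). No new argument is needed beyond checking that the hypotheses of those results hold here and that together they establish all four conditions (i)--(iv) of an excellent unfolding.

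First I would verify that $F$ is a good unfolding. Choosing the representative $F|U\to W$ so that $F^{-1}(W)=U$ gives condition (i) for free. Condition (ii), namely $F(U\setminus(\underline{z}\times\C))=W\setminus T$, is exactly the assertion that $s(f_t)$ is constant, which is part of the hypothesis. For condition (iii) I would invoke Theorem~\ref{main_thm}: its hypotheses ask that $F$ be an origin-preserving one-parameter unfolding with $F|U\to W$ proper and finite-to-one (assumed here), that $f$ admit a one-parameter stable unfolding (assumed), and that either $s(f_t)\le d(f_t)$ for all $t$ or both $s(f_t)$ and $d(f_t)$ be constant --- the second alternative holds by hypothesis. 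Since $\mu_I$ is constant, Theorem~\ref{main_thm} yields that the instability locus of $F$ equals $T$, so in particular it is contained in $T$, which is condition (iii). Hence $F$ is good.

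Next I would obtain condition (iv), constancy of the $0$-stables along $T$. Theorem~\ref{0stablesconstant} requires precisely the data already in hand --- $F$ origin-preserving and one-parameter with $F|U\to W$ proper and finite-to-one, together with the disjunctive hypothesis on $s(f_t)$ and $d(f_t)$ --- and, under constancy of $\mu_I$, concludes that the $0$-stables are constant; equivalently, once goodness is in place one may simply cite Corollary~\ref{muiconst+good}. Combining, $F$ satisfies (i)--(iv) of Definition~\ref{exc_def}, so $F$ is excellent. The only point that calls for care is the bookkeeping of which hypothesis feeds which earlier result: the ``$s(f_t)$ and $d(f_t)$ constant'' branch of the disjunction is what is used in both Theorem~\ref{main_thm} and Theorem~\ref{0stablesconstant}, while the ``one-parameter stable unfolding'' hypothesis is needed only for Theorem~\ref{main_thm} (there it realises the $\widetilde{D}^k(f_t)$ as a family of hypersurfaces so that additivity of the hypersurface Milnor number applies). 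There is no genuine obstacle here --- the corollary is a formal consequence of the two theorems and the definition.
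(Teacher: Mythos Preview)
Your proposal is correct and follows essentially the same route as the paper: verify condition (i) by choice of $U$, condition (ii) from constancy of $s(f_t)$, and then invoke Theorem~\ref{main_thm} and Theorem~\ref{0stablesconstant} (equivalently Corollary~\ref{muiconst+good}) for the remaining two conditions. The paper's own proof is a terse version of exactly this, though it inadvertently swaps the labels (iii) and (iv); your labeling matches Definition~\ref{exc_def} correctly.
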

\begin{proof}
Obviously we can choose $U$ so that condition (i) of an excellent unfolding holds. As $s(f_t)$ is constant we have condition (ii). As both $s(f_t)$ and $d(f_t)$ are constant then by Theorem~\ref{0stablesconstant} the $0$-stables are constant, i.e., condition (iii) holds. Similarly Theorem~\ref{main_thm} gives condition (iv).
\end{proof}

From the preceding results we can see that we can control two conditions from the definition of excellent unfolding by using $\mu _I$.
The main element at the heart of the proofs is that
we know the cohomology description of the multiple point spaces (Theorem~\ref{dis_cohomology} and Lemma~\ref{E1sparse})
and that the invariants are upper semi-continuous (from Section~\ref{uppersemi}). Hence, if we have non-corank $1$ situations where these hold for the multiple point spaces, then we can make statements similar to that of Theorem~\ref{0stablesconstant} and, when we have additivity of Milnor number, Theorem~\ref{main_thm}.

For example, Buchweitz and Greuel, in \cite{bg}, show that one can define an additive upper semi-continuous Milnor number for certain families of curves. This means that if our multiple point spaces are curves, we can state the following theorem. 
\begin{theorem}
\label{closingtheorem}
Let $f:(\C ^n,\underline{z})\to (\C^ p,0)$, $n<p$, be a finitely $\AA $-determined map-germ such that one of the following holds:
\begin{enumerate}
\item $(n,p)=(n,2n-1)$, for $n\geq 2$, so in particular this includes $(n,p)=(2,3)$. 
\item $(n,p)=(n,2n)$, $n\geq 1$.
\end{enumerate}
Suppose further that $F$ is an origin-preserving one-parameter unfolding of $f$ with $s(f_t)\leq d(f_t)$ for all $t\in T$ or both $s(f_t)$ and $d(f_t)$ are constant.

Then, $\mu _I(f_t)$ constant for all $t$ implies that the instability locus of $F$ is $T$ and the $0$-stables are constant for all $t\in T$.
\end{theorem}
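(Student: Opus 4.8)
The plan is to treat Theorem~\ref{closingtheorem} as an instance of the principle articulated just above it: the conclusions of Theorems~\ref{0stablesconstant} and~\ref{main_thm} use only that the disentanglement is governed by the multiple point spaces as in Section~\ref{dis}, that the Milnor numbers entering the alternating Milnor numbers are upper semi-continuous, and --- for the statement about the instability locus --- that they are additive with conservation of number. So the real work is to verify that in the two dimension ranges every $\widetilde{D}^k(f,\PP )$ is a curve, a finite scheme, or empty, since these are exactly the cases for which a well-behaved (additive, upper semi-continuous, conserved) Milnor number is available --- by Buchweitz--Greuel \cite{bg} in the $1$-dimensional case and elementarily in the $0$-dimensional case. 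The step I expect to be the main obstacle is the transfer of the Section~\ref{dis} machinery to this slightly more general situation; the two geometric arguments at the end are then copies of those already given.

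First I would run the dimension count. Since the disentanglement map $\widetilde{f}$ is stable, the dimension formulas for the multiple point spaces of stable maps (Theorem~\ref{mps_for_stable} and its corollaries) give $\dim\widetilde{D}^k(\widetilde{f})=nk-p(k-1)$ and $\dim\widetilde{D}^k(\widetilde{f},\PP )=nk-p(k-1)+\sum_i\alpha_i-k$; here I would note that these facts do not really need a corank hypothesis, the point in the ranges $(n,2n-1)$ and $(n,2n)$ being that the double point scheme of a finitely $\AA $-determined germ is Cohen--Macaulay of its expected dimension $2n-p$. Substituting $p=2n-1$ gives $\dim\widetilde{D}^2(\widetilde{f})=1$ and $\widetilde{D}^k(\widetilde{f})=\emptyset$ for $k\ge 3$ (as $n\ge 2$); substituting $p=2n$ gives $\dim\widetilde{D}^2(\widetilde{f})=0$ and $\widetilde{D}^k(\widetilde{f})=\emptyset$ for $k\ge 3$; and in both cases the fixed-point loci for non-trivial $\PP $ have dimension $\le 0$. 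Combining this with the isolated-instability characterisation of finite determinacy (Corollary~\ref{mps_for_findet}), for the germ $f$ itself each $\widetilde{D}^k(f,\PP )$ is a reduced curve with an isolated singularity (only $\widetilde{D}^2(f)$ can be such, and only in range (i)), a finite scheme, a point set of $k$-tuples from $\underline{z}$, or empty; in particular $d(f)\le 2$.

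Next I would transfer the invariants of Section~\ref{dis}. Because all the $\widetilde{D}^k(f,\PP )$ are $1$- or $0$-dimensional isolated complete intersection singularities (or empty), Proposition~\ref{alt_of_pair} and the alternating Milnor number formula of Theorem~\ref{marar_formula} apply unchanged, and Theorem~\ref{dis_cohomology} holds by the Remark following it, since the smoothings of the $\widetilde{D}^k(f,\PP )$ have the rational cohomology of wedges of spheres. Hence $\mu_I(f_t,0)=\sum_k\mu^{alt}_k(f_t,0)$, each $\mu^{alt}_k(f_t,0)$ is by Theorem~\ref{marar_formula} a positive combination of the Milnor numbers $\mu(\widetilde{D}^k(f_t,\PP _\sigma),w)$ for $w\in\epsilon_k^{-1}(0)$ plus a sign-sum that is constant in the family under either standing hypothesis ($s(f_t)\le d(f_t)$ or $s(f_t),d(f_t)$ both constant), and all these Milnor numbers are upper semi-continuous and conserved under flat deformation --- the $1$-dimensional ones by \cite{bg}, the $0$-dimensional ones by semicontinuity and additivity of colength (cf.\ \cite{looi}). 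Therefore the chain of equivalences of Theorem~\ref{constancytheorem} is valid here too: $\mu_I(f_t,0)$ constant on $T$ is equivalent to $\mu(\widetilde{D}^k(f_t,\PP _\sigma),w)$ being constant on $T$ for all $k$, all $\PP _\sigma$ and all $w\in\epsilon_k^{-1}(0)$.

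Finally I would reproduce the two geometric arguments. For the $0$-stables: a $0$-stable corresponds to a $0$-dimensional $\widetilde{D}^k(f,\PP )$, so a curve of $0$-stables limiting to the origin and not contained in $T$ would force $\sum_{w\in\epsilon_k^{-1}(0)}\mathrm{colength}\,\widetilde{D}^k(f_t,\PP )$ to be strictly smaller for $t\ne 0$ than for $t=0$, which is impossible by the constancy just obtained --- this is the argument of Theorem~\ref{0stablesconstant}. For the instability locus: if it were not $T$ there would, by Corollary~\ref{mps_for_findet}, be a curve $\gamma$ of singular points of some $\widetilde{D}^k(f_t,\PP _\sigma)$ not lying along the parameter axes coming from $\underline{z}$; since $\gamma(t)$ lies over a point of $\C ^p$ that tends to $0$ but is distinct from $0$ for $t\ne 0$, its positive Milnor contribution leaves the fibre over $0$, so $\sum_{w\in\epsilon_k^{-1}(0)}\mu(\widetilde{D}^k(f_t,\PP _\sigma),w)$ is strictly smaller for $t\ne 0$, contradicting constancy --- this is the argument of Theorem~\ref{main_thm} with additivity of hypersurface Milnor numbers replaced by the additivity and conservation of number available for curves (via \cite{bg}) and for finite schemes. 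The delicate point throughout is the third paragraph: one must be sure that the collapse of the image computing spectral sequence, the identity $\mu_I=\sum_k\mu^{alt}_k$, and especially the expression of $\mu^{alt}_k$ as a positive combination of Milnor numbers plus a family-constant all survive when $f$ is not assumed corank $1$ and the multiple point spaces are only known to be curves or finite schemes, and that one genuinely has conservation of the total Milnor number, not merely upper semi-continuity, for the families $\widetilde{D}^k(f_t,\PP _\sigma)$ involved.
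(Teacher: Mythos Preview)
Your approach is essentially the one the paper takes: reduce to the fact that in these dimension ranges the relevant multiple point spaces are curves or finite schemes, invoke Buchweitz--Greuel for the curve case and degree/colength for the $0$-dimensional case, and then replay the arguments of Theorems~\ref{0stablesconstant} and~\ref{main_thm}. The paper's proof is terser but follows the same line.

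There is one concrete slip in your dimension count. For $(n,p)=(n,2n-1)$ you claim $\widetilde{D}^k(\widetilde{f})=\emptyset$ for all $k\ge 3$ and hence $d(f)\le 2$. But $\dim\widetilde{D}^3(\widetilde{f})=3n-2(2n-1)=2-n$, which is $0$ when $n=2$: for maps $(\C^2,\underline{z})\to(\C^3,0)$ the triple point space $\widetilde{D}^3(\widetilde{f})$ is generically a non-empty finite set (the ordinary triple points of the stabilisation), and $d(f)$ can equal $3$. The paper treats this explicitly: for $n>2$ one has $\widetilde{D}^3(\widetilde{f})=\emptyset$, while for $n=2$ it is zero-dimensional and $\mu^{alt}_3$ behaves exactly as in the corank~$1$ analysis. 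This is harmless for your overall scheme, since you already handle $0$-dimensional pieces, but the stated bound $d(f)\le 2$ is false in the $(2,3)$ case and should be corrected.

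On your flagged ``delicate point'': the paper resolves the transfer of the Section~\ref{dis} machinery by observing that in these ranges the \emph{stable} singularities are automatically of corank~$1$, so the disentanglement map $\widetilde{f}$ is corank~$1$ even if $f$ is not, and the Marar--Mond description applies to $\widetilde{f}$. For the germ $f$ itself one only needs that $\widetilde{D}^2(f)$ is Cohen--Macaulay of the expected dimension and sits in a flat family (so that \cite{bg} applies) --- the paper cites \cite{bhr} for this in the mono-germ case. You allude to both points but it would strengthen the write-up to make them explicit rather than leaving them as a caveat.
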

\begin{proof}
(i) In this case $\widetilde{D}^2(f)$ is a curve and the stable singularities will be corank $1$. Thus, 
$\widetilde{D}^2(\widetilde{f})$ will be a smoothing of this curve. Furthermore, as in the case of mono-germs shown in \cite{bhr}, it will lie in flat family as required by \cite{bg} for upper semi-continuity to hold.

The restriction of $\widetilde{D}^2(f)$ is a zero dimensional Cohen-Macaulay space and in a smoothing the number of pairs of points in $\widetilde{D}^2(\widetilde{f})$ will be the number of Whitney umbrellas in the image of the stabilisation. These can be counted as the number of points can be calculated from the degree of $\widetilde{D}^2(f)$. 
Thus, $\mu _2^{alt}$ behaves in the same way as for a corank $1$ map. 

If $n>2$, then $\widetilde{D}^3(\widetilde{f})$ is empty. If $n=2$, then it is zero-dimensional and $\mu _3^{alt}$ behaves in the same way as the corank $1$ case.   

(ii) Only $\widetilde{D}^2(\widetilde{f})$ is non-empty and it is zero-dimensional. In a stabilisation, the only singularities are ordinary intersection of two sheets and $\mu _2^{alt}(f)$ merely counts these.
\end{proof}

A similar result has been found in \cite{ballperez}. The authors there consider the case of finitely $\AA $-determined mono-germs $f:(\C ^n,0)\to (\C ^{2n-1},0)$ for $n\geq 3$, (any corank). They consider the double point set in the target, denote this by $\mathcal{D} (f)$, and they show that if $\mathcal{D} (f_t)$ is constant in an unfolding, then the unfolding is excellent.
Using techniques similar to those in \cite{diswe} it is possible to show that $\mathcal{D} (f_t)$ constant is equivalent to $\mu _I(f_t)$ constant, and hence we recover their result.
Precise details and generalisations of these assertions will be given in a forthcoming paper.

\section{Concluding remarks}

\begin{remark}
This paper has shown that $\mu _I$ can be used in certain situations to control conditions (ii) and (iii) for an unfolding to be excellent. 
In some examples it can be seen that it also controls the second condition (i.e., $F(U  \backslash (\underline{z} \times \C ) )= W\backslash T $). For example, in \cite{polar} Theorem~8.7 the condition is controlled by $0$-stables and the singularity structure of the multiple point spaces. Both these can be controlled by $\mu _I$. Therefore, it would be interesting to have a general result of the form `$\mu _I$ constant implies condition (ii) of excellent unfolding.'

In fact, from this and using a number of theorems from the previous section as evidence, it seems that  $\mu _I$ should completely control excellence of unfoldings. It seems natural to conjecture the following:
\begin{conjecture}
Suppose that $f:(\C ^n,\underline{z})\to (\C ^p,0)$, $n<p$, is a corank $1$ complex analytic multi-germ with an origin-preserving one-parameter unfolding. Then $\mu _I$ constant implies that the unfolding is excellent. 
\end{conjecture}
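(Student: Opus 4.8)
The following is a plan of attack rather than a proof. The idea is to verify, under the sole hypothesis that $\mu_I(f_t)$ is constant for $t\in T$, the four conditions of Definition~\ref{exc_def} in turn. Condition~(i) costs nothing: replace $U$ by $F^{-1}(W)$. Conditions~(iii) and~(iv) are, in essence, Theorems~\ref{main_thm} and~\ref{0stablesconstant}, so the task there is to eliminate the two auxiliary hypotheses those results carry --- the numerical assumption ``$s(f_t)\le d(f_t)$, or both $s(f_t)$ and $d(f_t)$ constant'', and, for the instability locus, the existence of a one-parameter stable unfolding of $f$. Condition~(ii), constancy of $s(f_t)$, has no counterpart among the results proved so far and is, I expect, the heart of the conjecture.

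First I would attack~(ii). Suppose a branch of $f_t$ fuses with another as $t\to0$, so that $s(f_0)<s(f_t)$; the aim is to force a strict jump in $\mu_I$. One concrete handle is the boundary term of the Image Computing Spectral Sequence: a direct computation with the defining formula shows $\mu^{alt}_{d(f)+1}(f)=\binom{s(f)-1}{d(f)}$ (which vanishes exactly when $s(f)\le d(f)$), tying $s(f)$ to $\mu_I(f)$ once $d(f)$ is controlled; but this term is neither obviously upper nor obviously lower semicontinuous, and it vanishes precisely in the range where one would most want it, so it cannot by itself close the argument. The general mechanism one wants is that the fused configuration at $t=0$ carries every multiple-point incidence present for nearby $t$ together with the incidences created along the contact locus of the colliding branches, so that passing to closures in $D^k(F)$ realises $\widetilde D^k(f_0)$ as a non-trivial degeneration of $\widetilde D^k(f_t)$ for at least one $k$, and the jump propagates through the sparse spectral sequence of Lemma~\ref{E1sparse} to some $\mu^{alt}_k$. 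Making this precise is the principal obstacle, since fusion is an essentially uncontrolled local deformation and one must follow the whole tower $\{\widetilde D^k\}_{k\ge2}$ and its alternating cohomology at once. A sensible first step is to settle the mono-germ case --- where fusion means an extra branch of $f_t$ through the origin that disappears into the mono-germ at $t=0$, as in the cusp example of Section~\ref{uppersemi} --- and then reduce the multi-germ case to it branch by branch.

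Granting~(ii), I would next show that $\mu_I$ constant forces $d(f_t)$ constant: if $d(f_0)>d(f_t)$ then, by the dimension count for stable maps (Theorem~\ref{mps_for_stable}) together with Corollary~\ref{mps_for_findet}, $\widetilde D^{d(f_0)}(f_0)$ is a non-empty isolated complete intersection singularity whose smoothing contributes to $\dis(f_0)$ while $\widetilde D^{d(f_0)}(\widetilde f_0)$'s analogue for $f_t$ is empty, and one rules this out by the argument in the proof of Theorem~\ref{mualt_upper}. With $s(f_t)$ and $d(f_t)$ both constant, Theorem~\ref{0stablesconstant} gives condition~(iv) at once, and only condition~(iii) remains, now without the stable-unfolding hypothesis. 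That hypothesis is used in the proof of Theorem~\ref{main_thm} only to present each $\widetilde D^k(f)$ as a hypersurface, so as to invoke additivity of the hypersurface Milnor number; for a general corank~$1$ finitely $\AA$-determined map $\widetilde D^k(f)$ is instead a higher-codimension isolated complete intersection singularity, and one replaces additivity by the conservation of the Milnor number in flat families of such singularities, namely that the total Milnor number of the special fibre equals the sum of those at the nearby singular points. A curve of unstable points of $F$ off $T$ still produces, by Corollary~\ref{mps_for_findet}, a curve of singular points in some $\widetilde D^k(F)$ distinct from the $k$-tuples drawn from $\underline z$, hence a jump in this conserved Milnor number, contradicting constancy of $\mu_I$ by Theorem~\ref{constancytheorem} (whose numerical hypothesis now holds).

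The hard part will be condition~(ii). Everything else is essentially an upgrade of the machinery of Sections~\ref{uppersemi}--\ref{unfoldings} from hypersurfaces to complete intersections, whereas controlling $s(f_t)$ by $\mu_I$ appears to demand a conservation-of-vanishing-cycles statement for the disentanglement of a multi-germ, especially in the range $n<p-1$ where even a bouquet-type description of the disentanglement is still open; a successful argument would have to extract from the spectral sequence of Lemma~\ref{E1sparse} just enough of that structure to detect the loss of a branch.
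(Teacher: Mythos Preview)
This statement is a \emph{conjecture} in the paper, not a theorem; there is no proof to compare against. What the paper does offer is the remark immediately following it, which sketches a strategy: first understand the behaviour of $\mu^{alt}_{d(f)+1}$ under deformation, and then, \emph{assuming $d(f_t)$ constant}, use constancy of $\mu^{alt}_{d(f_t)+1}(f_t)$ to force $s(f_t)$ constant. Your plan is broadly compatible with this but more fleshed out, and it reverses the order of attack: you propose to pin down $s(f_t)$ first and then deduce constancy of $d(f_t)$. Your closed formula $\mu^{alt}_{d(f)+1}(f)=\binom{s(f)-1}{d(f)}$ is correct and sharper than anything stated in the paper; it makes explicit the link the paper only gestures at. Your proposed upgrade of Theorem~\ref{main_thm} --- replacing hypersurface additivity by conservation of the Milnor number for isolated complete intersection singularities in a smooth ambient family --- is a legitimate route to removing the one-parameter-stable-unfolding hypothesis and is not discussed in the paper.

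The weakest link in your outline is the step ``granting~(ii), $\mu_I$ constant forces $d(f_t)$ constant''. You argue that if $d(f_0)>d(f_t)$ then $\widetilde D^{d(f_0)}(\widetilde f_0)\neq\emptyset$ while the corresponding space for $f_t$ is empty, and appeal to Theorem~\ref{mualt_upper}. But non-emptiness of $\widetilde D^{d(f_0)}(\widetilde f_0)$ does not by itself guarantee $\mu^{alt}_{d(f_0)}(f_0)>0$: the alternating part of the relative cohomology can vanish even when the Milnor fibre is non-empty (e.g.\ when the relevant restricted singularities are all smooth and the correction term in Theorem~\ref{marar_formula} cancels). Moreover, on the $f_t$ side the contribution in that range is governed by the boundary term $\binom{s-1}{d(f_t)}$, so you are comparing quantities of genuinely different nature at $k=d(f_t)+1,\dots,d(f_0)$. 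This step needs a real argument, and it is exactly where the paper's suggested order --- fix $d$ first, then read off $s$ from the boundary term --- may be easier to execute. Either way you correctly identify condition~(ii) as the crux, in agreement with the paper.
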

\end{remark}

\begin{remark}
Proving the above conjecture will probably involve clarifying how $\mu ^{alt}_{d(f)+1}$ behaves under deformation so that we have very general results on the upper semi-continuity of $\mu _I$. A second reason is that $\mu ^{alt}_{d(f)+1}$ is connected with condition (ii) of excellent unfolding. To see this suppose that $d(f_t)$ is constant in the family. Then, $\mu _I(f_t)$ constant implies that $\mu _{d(f_t)+1}^{alt}(f_t)$ is constant. Because of the definition, for $s(f_t)>d(f_t)$, it is natural to conjecture that this gives $s(f_t)$ is constant. 
This latter implies that condition (ii) of excellent holds. 
\end{remark}

\begin{remark}
Outside the area of equisingularity, as the structure of $\mu _I$ for multi-germs is detailed in terms of $\mu ^{alt}_k$ it may be possible to use the structure to compare $\mu _I$ and the $\AA _e$-codimension of map-germs. Consider the following.

For an isolated complete intersection singularity $h$ we have $\mu (h)=0$ if and only if $h$ is non-singular (i.e., $\KK _e$-codimension$(h)=0$), and $\mu (h)=1$ if and only if $\KK _e$-codimension$(h)=1$. It seems reasonable to prove that $\mu _I(f)=0$ if and only if $f$ is stable (i.e., $\AA _e$-codimension$(f)=0$) and $\mu _I(f)=1$ if and only if $f$ has $\AA _e$-codimension $1$. This latter may be related to having one multiple point space being a disjoint union of quadratic hypersurface singularities and those below it are non-singular, whilst the remaining are just the origin. These results could be useful in showing that $\mu _I$ constant in a family implies that the instability locus of the family is $T$.
\end{remark}

\begin{remark}
Moving beyond low $\AA _e$-codimension, for an isolated complete intersection singularity of dimension greater than zero it is well-known that $\KK _e$-codimension$(h)\leq \mu (h)$ with equality if $h$ is quasi-homogeneous. It is conjectured by Mond \cite{vancyc} that for $p=n+1$, we have $\mu _I (f)\leq \AA _e$-codimension$(f)$ with equality if $f$ is quasihomogeneous. 
Now, $\mu _I(f)$ is just a sum of $\mu _k^{alt}(f)$ and $\mu ^{alt}_k(f)$ is (for almost all $k$) the alternating part of the homology of $\widetilde{D}^k(f)$. It is possible to show that, in analogy with $\mu (h)\leq \KK _e$-codimension$(h)$, that $\mu ^{alt}_k(f)$ is less than or equal to the dimension of the symmetric part of the $\KK _e$-normal space of $\widetilde{D}^k(f)$. Hence, if one could show that the sum of the symmetric parts gave $\AA _e$-codimension$(f)$ -- which it does in many examples (see also \cite{folaug}) - then it would be possible to prove a general $\mu _I(f)\leq \AA _e$-codimension$(f)$ result. Admittedly, this would require no zero-dimensional multiple point spaces (since $\KK _e$-codimension $\leq \mu$ does not hold in this case) but this still includes a large class of maps.
\end{remark}

\end{document}